\newtheorem{theorem}{Theorem}[section]
\newtheorem{prop}[theorem]{Proposition}
\newtheorem{lemma}[theorem]{Lemma}
\newtheorem{corollary}[theorem]{Corollary}
\newtheorem{defn}[theorem]{Definition}
\numberwithin{equation}{section}
\newcommand{\Cent}{\mathrm{Central}}
\newcommand{\Reg}{\mathrm{Regular}}
\newcommand{\Irr}{\mathrm{Irr}}
\newcommand{\F}{\mathbb{F}}
\newcommand{\Fq}{\mathbf{F}_q}
\newcommand{\Fqq}{\mathbf{F}_{q^2}}
\newcommand{\bmf}[1]{\mathbf{#1}}
\newcommand{\lbd}{\lambda}
\newcommand{\sbteq}{\subseteq}
\newcommand{\Zu}[1]{Z_{U_{#1}(\Fq)}}
\newcommand{\U}[1]{U_{#1}(\Fq)}
\begin{document}
\title{Branching rules for Unitary and Symplectic matrices}
\author{Uday Bhaskar Sharma}
\email{udaybsharmaster@gmail.com}
\author{Anupam Singh}
\email{anupamk18@gmail.com}
\address{Indian Institute of Science Education and Research (IISER) Pune,  Dr. Homi Bhabha Road Pashan, Pune 411008, India}
\today
\thanks{The first named author has received National Post-Doctoral Fellowship from SERB, India during this project. The second named author would like to acknowledge support of MATRICS grant from SERB during this project.}
\subjclass[2010]{05A05,20G40,20E45}
\keywords{Unitary group, Symplectic group, Generating functions, Similarity classes, Commuting tuples of matrices, branching rules}

\begin{abstract}
This paper concerns the enumeration of simultaneous conjugacy classes of tuples of commuting unitary matrices and of commuting symplectic matrices over a finite field $\Fq$ of odd size. For any given conjugacy class, the orbits for the action of its centralizer group on itself by conjugation (that is, the conjugacy classes within the centralizer group) are called branches. We determine the branching rules for the unitary groups $U_2(\Fq), U_3(\Fq)$, and for the symplectic groups $Sp_2(\Fq), Sp_4(\Fq)$. 
\end{abstract}
\maketitle
\section{Introduction}
Let $G$ be a finite group and $k$ a positive integer. The group $G$ acts on the set, 
$$G^{(k)} = \{(g_1,\ldots, g_k)\in G^k \mid  g_ig_j = g_jg_i, \forall~ 1\leq i \neq j \leq k \}$$
of $k$-tuples of pair-wise commuting elements in $G$ by simultaneous conjugation. The question here is to determine the (number of) orbits, which is simultaneous conjugacy classes of commuting elements for a given $G$. The evaluation of the number of simultaneous conjugacy classes of commuting elements is done with the help of \emph{branching rules}. For a given conjugacy class $C(g)$ of $g$ in $G$, the orbits for the conjugacy action of the centralizer $Z_G(g)$ on itself (that is the conjugacy classes within $Z_G(g)$) are called \emph{branches} of the given conjugacy class $C(g)$. Notice that the computation of simultaneous conjugacy classes of commuting elements amounts to 
\begin{enumerate}
\item[(a)] computing branches of all conjugacy classes of $G$, 
\item[(b)] computing branches of all conjugacy classes of centralizer subgroups of $G$, and
\item[(c)] continue the step (b) for further centralizer subgroups within those centralizer subgroups iteratively. 
\end{enumerate}
For the step (a) we simply need to compute the branches for the types (the elements of which centralizer subgroups are isomorphic).  Now, the conjugacy classes in a particular $Z_G(g)$ are either a conjugacy class of $G$ or a new type (because of splitting of a class of $G$). For the steps (b) and (c) we compute the branches for the new types. This information is stored in a \emph{branching table} which is indexed by the types of conjugacy classes of $G$ alongwith the type of new ones. The $rs^{th}$ entry of the branching tables stores the number of times the class $C_s$ appears in branching of class $C_r$, thus giving rise to the branching matrix $B_G$. This paper concerns determination of this branching matrix for some matrix groups. The first named author studied this problem for the algebra of all matrices over finite field in~\cite{Sh1, Sh2} which also gives the answer for the group $GL_n(\Fq)$. In this paper we continue this study for unitary and symplectic groups.

One of the central problem in group theory is to understand commuting probability for a group. The probability of finding a $k$-tuple (or $k$ elements in $G$ randomly) of elements of $G$ which commute pair-wise is defined by 
$$cp_k(G) = \displaystyle\frac{|G^{(k)}|}{|G|^k}.$$
Erdos and Turan~\cite{ET} showed that $cp_2(G) = \displaystyle\frac{l}{|G|}$, where $l$ is the number of conjugacy classes in $G$. Lescot (see~\cite[Lemma 4.1]{Le}) gave a recurrence formula for $cp_k(G)$: 
\begin{equation}\label{ELesc}
cp_k(G) = \frac{1}{|G|} \sum_{i = 1}^{l}  \frac{cp_{k-1}\left (Z_G(g_i)\right)}{|{\text C}(g_i)|^{k-2}},
\end{equation} 
where ${\text C}(g_i)$ denotes the conjugacy class of $g_i$ in $G$, and $Z_G(g_i)$ denotes the centralizer of $g_i$ in $G$. Let $c_G(k)$ be the number of simultaneous conjugacy classes of $k$-tuples of pairwise commuting elements of $G$. We set $c_G(0)=1$. For the case $k\geq 3$, we use Lescot's formula to relate $cp_k(G)$ to $c_G(k-1)$ and the branching matrix $B_G$ as follows. 
\begin{theorem}\label{TMain}
Let $G$ be a finite group and $k$ be a positive integer. 
The probability that a $k$-tuple of elements of $G$ commute is 
$$ cp_k(G) = \frac{c_G(k-1)}{|G|^{k-1}} = \frac{{\bf 1} . B_G^{k-1} . e_1}{|G|^{k-1}}$$
where ${\bf 1}$ is a row matrix with all $1$'s, the vector $e_1$ is the column vector with $1$ at first place and $0$ elsewhere and $B_G$ is the branching matrix of $G$.
\end{theorem}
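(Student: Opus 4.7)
The proof splits into two independent steps. First I would show $cp_k(G)=c_G(k-1)/|G|^{k-1}$ by induction on $k$ using Lescot's recurrence~\eqref{ELesc}, and then I would recognize $c_G(k-1)$ as the scalar ${\bf 1}\cdot B_G^{k-1}\cdot e_1$ by turning the branching recursion into a matrix power.

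For the first equality, the base case $k=1$ is tautological since $cp_1(G)=1=c_G(0)$. For the inductive step, assume the formula for $k-1$ in every finite group, and apply it inside each centralizer $Z_G(g_i)$. Substituting $cp_{k-1}(Z_G(g_i))=c_{Z_G(g_i)}(k-2)/|Z_G(g_i)|^{k-2}$ into~\eqref{ELesc} and using the orbit-stabilizer identity $|Z_G(g_i)|\cdot|{\text C}(g_i)|=|G|$ collapses the two denominators into a single $|G|^{k-2}$, giving
\[
cp_k(G) \;=\; \frac{1}{|G|^{k-1}}\sum_{i=1}^{l} c_{Z_G(g_i)}(k-2).
\]
What remains is the branching identity $c_G(k-1)=\sum_i c_{Z_G(g_i)}(k-2)$, an instance of the classical bijection: a simultaneous $G$-conjugacy class of a commuting tuple $(g_1,\dots,g_{k-1})\in G^{(k-1)}$ is encoded by the $G$-conjugacy class $C_i$ of $g_1$ together with the simultaneous $Z_G(g_i)$-conjugacy class of $(g_2,\dots,g_{k-1})\in Z_G(g_i)^{(k-2)}$.

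For the second equality I would iterate this recursion. Order the types indexing $B_G$ so that position $1$ corresponds to $G$ itself (the identity class, whose centralizer is all of $G$), and let $v_m$ be the vector with $(v_m)_t=c_{Z_t}(m)$, where $Z_t$ is the centralizer group attached to type $t$. Applying the branching bijection inside each $Z_t$ shows that $v_m$ satisfies a linear recursion with transition matrix $B_G$, because $(B_G)_{rs}$ counts precisely the multiplicity with which type-$s$ classes appear among the branches of a type-$r$ class. Since $c_{Z_t}(0)=1$ for every $t$, the initial vector $v_0$ is the all-ones vector, and iterating gives $v_{k-1}=B_G^{k-1}v_0$; reading off the coordinate indexed by $G$ (position $1$) recovers the claimed scalar ${\bf 1}\cdot B_G^{k-1}\cdot e_1$.

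The principal obstacle I anticipate is bookkeeping rather than mathematics: one must fix the notion of \emph{type} so that every centralizer encountered in the iteration is already indexed by a row (and column) of $B_G$, ensuring the matrix recursion closes on a finite index set; and one must pin down the row/column ordering and transpose convention carefully so that $e_1$ indeed selects the node corresponding to $G$ itself, at which the iteration is started.
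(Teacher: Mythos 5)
Your proposal is correct and follows essentially the same route as the paper: the paper first proves (as Lemma~\ref{lemmack}) the branching identity $c_G(k)=\sum_i c_{Z_G(g_i)}(k-1)$ via the same orbit bijection you describe, turns it into the matrix-power formula ${\bf 1}\cdot B_G^{k}\cdot e_1$ by the same closed recursion on types, and then runs the same induction through Lescot's recurrence with $|Z_G(g_i)|\cdot|{\text C}_G(g_i)|=|G|$ collapsing the denominators. The only cosmetic differences are the base case ($k=1$ for you versus $k=2$ via Erd\H{o}s--Tur\'an in the paper) and the transpose/indexing convention for $B_G$, which you correctly flag as bookkeeping.
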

\noindent We prove this in Section~\ref{brcp}. Using this one can compute $cp_k(G)$ easily for those groups where we compute the branching matrix. We plan to continue our study along these lines too in future. For more information on the importance of commuting probability we refer to~\cite{FF,FG,GR,HR,Le}.

In~\cite{BMRT} the simultaneous conjugacy classes, the orbits of the action of $G$ on $G^k$ by conjugation, are studied in the context of complete reducibility for algebraic groups. The moduli spaces of ordered pairs and ordered triples of commuting elements in a compact Lie group, up to simultaneous conjugacy, is studied in~\cite{BFM}. In~\cite{PM} the indecomposable symplectic modules of $C_2\times C_2$ in characteristic $2$ are studied, which comes down to essentially looking at this problem for ordered pairs.  In general, the branching helps in computing the indecomposable (unitary, symplectic, orthogonal) $kA$-module where $A$ is a finite Abelian group. We compute the number of simultaneous conjugacy classes of $k$-tuples of commuting matrices in $U_n(\Fq)$ and $Sp_{2l}(\Fq)$ for $n=2,3$ and $l=1,2$, which gives the number of isomorphism classes of $n$-dimensional unitary representations and symplectic representations, of the polynomial algebra in $k$ variables $\Fqq[x_1, \ldots, x_k]$ and $\Fq[x_1, \ldots, x_k]$ respectively.

Let $\Fq$ be a finite field, where $q$ is a prime power, and $q$ is odd. Consider $\Fqq$, the degree $2$ field extension of $\Fq$ and the involution $\sigma\colon \Fqq \rightarrow \Fq$ given by $\sigma(x) = x^q$ for $x \in \Fqq$. Let $V$ be an $n$-dimensional vector space over $\Fqq$, and let $\beta\colon V\times V \rightarrow \Fqq$ be a non-degenerate sesquilinear form. We say that $\beta$ is hermitian if with respect to some basis for $V$, the matrix, $[\beta_{ij}]$ of $\beta$ satisfies $ {}^t[\beta_{ij}^q] = [\beta_{ij}]$. Let $g \in GL_n(\Fqq)$, where $g = [g_{ij}]$, and $\overline{g}= [g_{ij}^q]$. We say that $g$ is a \emph{unitary matrix} with respect to the hermitian form $\beta$ if $ {}^t\!g\beta\overline{g} = \beta$. The set $U_{n,\beta}(\Fqq) = \{g \in GL_n(\Fqq) \mid  {}^t\!g\beta \overline{g} = \beta\}$ forms a group, called a unitary group. All hermitian forms over a finite field are equivalent (see~\cite[Corollary 10.4]{Gro}), the unitary groups therefore are conjugate in $GL_n(\Fqq)$ for different $\beta$ and hence isomorphic. Thus, the unitary group is simply denoted by $U_n(\Fq)$. The relation between conjugacy classes and characters of this group has been a topic of intensive investigation. Wall~\cite{Wa} discussed the conjugacy classes of unitary, symplectic and orthogonal groups. Thiem and Vinroot~\cite{TV} studied the character theory of unitary groups by explaining the Ennola duality, which tries to generalize Green's results for the general linear group. The centralizer classes (z-classes or similarity class types) in $U_n(\Fq)$ is studied in~\cite{BS} and proved that this number is same as that of $GL_n(\Fq)$ when $q>n$. We compute the branching rules for the group $U_n(\Fq)$ when $n=2$ and $3$. 
\begin{theorem}\label{main1}
The branching rules for the unitary groups $U_2(\Fq)$ and $U_3(\Fq)$ are as in the branching table given in Table~\ref{tableI} and Table~\ref{tableII}, respectively.
\end{theorem}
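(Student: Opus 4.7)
The plan is to carry out the iterative procedure of steps (a)--(c) of the introduction for the specific groups $U_2(\Fq)$ and $U_3(\Fq)$, reading off each entry of the branching matrix directly from a case analysis. For step (a) I would begin with the classification of conjugacy classes in $U_n(\Fq)$ due to Wall~\cite{Wa}: classes are parametrized by functions from the set of self-conjugate irreducible polynomials to partitions with a degree constraint, and Wall's formulas describe each centralizer $Z_{U_n(\Fq)}(g)$ as a product of smaller unitary groups and general linear groups over extensions, modified by a unipotent radical when $g$ is not semisimple. For $n\le 3$ the list of similarity class types is short: scalar classes, regular semisimple classes of split or elliptic type, and the non-semisimple Jordan types $(2)$ in $U_2$ and $(2,1)$ or $(3)$ in $U_3$. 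For each type I would fix an explicit representative and write its centralizer down.

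For steps (b) and (c), I would enumerate the conjugacy classes of each centralizer and sort them into two bins: classes whose image in $U_n(\Fq)$ already matches one of the types from step (a), and genuinely new types that appear only inside some centralizer. In the semisimple cases the centralizers are direct products of copies of $U_1(\Fq)$ and $GL_1(\Fqq)$, which are abelian, so each element is its own conjugacy class and the classification is immediate. In the non-semisimple cases one must analyze the Levi action on the unipotent radical by conjugation. For every new type, I would iterate the same procedure on its centralizer, continuing until every centralizer encountered is abelian. Since all iterated centralizers encountered for $n\le 3$ are small solvable groups, the process terminates after a bounded number of steps, and tabulating the number of occurrences of each type as a branch of each other type produces Tables~\ref{tableI} and~\ref{tableII}.

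The main obstacle I expect is the treatment of non-semisimple types in $U_3(\Fq)$, whose centralizers are semidirect products of a small torus (a copy of $U_1(\Fq)$ or of $GL_1(\Fqq)$) by a two or three dimensional unipotent piece. One must work out the conjugacy classes of these groups by hand and then recognize each such class inside the parent unitary group, keeping careful track of whether eigenvalue data lies in the norm-one subgroup $U_1(\Fq)$ or in an extension thereof. A secondary bookkeeping challenge is to verify that two centralizer classes with the same abstract structure (for example, two regular semisimple classes with different characteristic polynomials) are correctly assigned to the same or different types in $U_n(\Fq)$, so that the aggregated entries of the branching matrix are accurate.
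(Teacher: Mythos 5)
Your proposal is correct and follows essentially the same route as the paper: fix canonical forms for the $4$ (resp.\ $8$) similarity class types, write each centralizer explicitly, observe that all regular types have abelian centralizers so each element is its own branch, and handle the one genuinely non-abelian, non-central case --- the type $(2,1)_1$ in $U_3(\Fq)$ --- by an explicit conjugation analysis ($XB=B'X$) inside the centralizer; no new types arise, so the iteration terminates immediately. The only cosmetic difference is that you cite Wall's parametrization where the paper uses the equivalent description via $U$-irreducible polynomials.
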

\noindent The proof of this theorem is in two parts. The case of $U_2(\Fq)$ is in Section~\ref{U2Main1}, and the case of $U_3(\Fq)$ is dealt in Section~\ref{U3Main1}.

Now, consider finite field $\Fq$ of odd size and a vector space $V$ of dimension $2l$ over it. We consider the non-degenerate skew-symmetric bilinear form $\beta\colon V\times V \rightarrow \Fq$, which is unique up to equivalence, i.e., the matrix of $\beta$ satisfies $\beta = -{}^t\!\beta$. Then any element $g\in GL_{2l}(\Fq)$ is said to be \emph{symplectic} if $ g\beta\ \!{}^t\!g = \beta$. The set $Sp_{2l}(\Fq) = \{g \in GL_{2l}(\Fq)\mid g\beta {}^t\!g = \beta\}$ of symplectic matrices over $\Fq$, forms a group and is called the \emph{symplectic group}. The representation theory of symplectic group is studied in~\cite{Sr, Vi}. We compute the branching rules in this group.
\begin{theorem}\label{Main2}
The branching rules for the symplectic groups, $Sp_2(\Fq)\cong SL_2(\Fq)$ and $Sp_4(\Fq)$ are as in the branching table in Table~\ref{tableIII} and Table~\ref{tableIV} respectively.
\end{theorem}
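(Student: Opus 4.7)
The plan is to apply the three-step scheme (a)--(c) from the introduction separately to $G = Sp_2(\Fq) \cong SL_2(\Fq)$ and to $G = Sp_4(\Fq)$. First I would list the conjugacy classes of $G$, with a representative $g$ and the isomorphism type of $Z_G(g)$ for each type; this information is classical and due to Wall~\cite{Wa}. For $Sp_2$ the list is short: the two central elements $\pm I$ (centralizer all of $G$), the split and non-split semisimple tori (centralizers of orders $q-1$ and $q+1$), and the four ``unipotent'' classes obtained from $\pm\begin{pmatrix}1 & 1 \\ 0 & 1\end{pmatrix}$ and $\pm\begin{pmatrix}1 & \epsilon \\ 0 & 1\end{pmatrix}$ for a fixed non-square $\epsilon \in \Fq^*$. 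For $Sp_4$ the list is longer; the classes are indexed by the primary decomposition of $g$, and the centralizer of a typical representative is a product of factors of the shapes $Sp_{2m}(\Fq)$, $U_m(\Fq)$, and $GL_m(\Fq)$, possibly extended by a unipotent radical in the non-semisimple cases.

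Step (a) then amounts to, for each type $[g]$, enumerating the conjugacy classes of $Z_G(g)$ and matching them with the types of $G$ via the inclusion $Z_G(g) \hookrightarrow G$; the classes that do not come from a $G$-type are recorded as \emph{new} types and contribute new rows and columns to the branching matrix $B_G$. For semisimple $g$ the centralizer is abelian, so the branches are simply the elements of the centralizer and the matching is immediate. For the unipotent and mixed cases the centralizer has a non-trivial unipotent radical, and one must compute its conjugacy classes directly by picking a basis adapted to the Jordan block decomposition and exploiting the block structure of the skew form. Steps (b) and (c) then iterate on the new types; for $SL_2(\Fq)$ the new types are just the two tori and these are abelian, so the iteration terminates at once. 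For $Sp_4(\Fq)$ the new types include subgroups such as $GL_2(\Fq)$, $U_2(\Fq)$, and $Sp_2(\Fq)$, whose branches I can import from~\cite{Sh1, Sh2} and from Theorem~\ref{main1}, together with solvable groups containing unipotent radicals, for which I would perform the class count from scratch.

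The main obstacle is the $Sp_4$ calculation, and within it the non-semisimple classes whose centralizers contain unipotent radicals. The difficulty is twofold: first, conjugation inside the unipotent radical can split a single $GL_4$-class into several $Sp_4$-classes, with the splitting controlled by whether certain quantities are squares in $\Fq^*$, so one must keep careful track of the square-class invariants that distinguish, for example, the two unipotent types already visible in $Sp_2$; second, one has to verify that the resulting counts are polynomial in $q$ of the predicted form and match the tables across all residues of $q$ modulo small primes. I would organize the computation class by class, normalize each centralizer to a standard form, compute its order and its conjugacy classes, and finally assemble the branching matrix entry by entry, using the sum over branches of a class of size $|C(g)|$ must recover the order $|Z_G(g)|$ as an internal consistency check at each step.
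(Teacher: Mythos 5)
Your overall strategy coincides with the paper's: write down the conjugacy class types with explicit centralizers, enumerate the conjugacy classes inside each centralizer by normalizing representatives (the paper does this by solving $XB=B'X$ case by case), record any common centralizers that do not already occur as new types, and iterate until the list closes up. For $Sp_2(\Fq)$ your argument is essentially complete, since every non-central centralizer there is abelian of the same type as the element itself, so the table is $5\times 5$ and no new types appear.

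For $Sp_4(\Fq)$, however, there is a concrete error in your organizing dichotomy, and beyond that the plan is not executed. You assert that for semisimple $g$ the centralizer is abelian so that ``the branches are simply the elements of the centralizer.'' That holds only for \emph{regular} semisimple $g$. The non-regular, non-central semisimple types $B_6$, $B_8$, $C_1$, $C_3$, $D_1$ of Table~\ref{TSp4Con} have non-abelian reductive centralizers --- of order $q(q+1)(q^2-1)$ for $B_6$, isomorphic to $GL_2(\Fq)$ for $B_8$, and to a torus times $Sp_2(\Fq)$ or to $Sp_2(\Fq)\times Sp_2(\Fq)$ for $C_1$, $C_3$, $D_1$ --- and these are precisely the rows where several distinct branch types occur (e.g.\ the $B_8$ row contains branches of types $B_8$, $B_9$, $B_3$ and $B_2$). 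Your scheme as stated would assign each of these types a single branch type with multiplicity $|Z_G(g)|$, which contradicts Table~\ref{tableIV}; you partially recover by later proposing to import the class data of $GL_2$, $U_2$, $Sp_2$ from \cite{Sh1} and Theorem~\ref{main1}, but you misfile these as new types arising in steps (b)--(c) rather than as step-(a) centralizers of semisimple classes, and these centralizers contain no unipotent radical at all. Finally, the substance of the theorem is the verification of the individual matrix entries --- above all for the non-semisimple types $A_2$, $A_3$, $A_3'$, whose centralizers do have unipotent radicals and whose analysis produces the three genuinely new types $N_1$, $N_2$, $N_3$ with commutative-by-small centralizers of orders $4q^3$, $2q^3$, $2q^3$ (this is why the table is $24\times 24$ rather than $21\times 21$) --- and none of that computation is carried out. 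Your proposed consistency check is sound once restated correctly: the $Z_G(g)$-class sizes of the branches must sum to $|Z_G(g)|$.
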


\noindent The proof of this theorem too is in two parts. The proof in the case of $Sp_2(\Fq)$ is in Section~\ref{SecSp2}, and the proof in the case of $Sp_4(\Fq)$ is in Section~\ref{SecSp4}. We also see that for the case of $Sp_4(\Fq)$ three new branches appear and hence the table is of size $21+3=24$ instead of $21$ which is the number of conjugacy class types. In all other cases the branching table is of the same size as the number of conjugacy class types. 

In addition to determining the branching rules, we observe the following:
\begin{corollary}\label{CorMain1}
\begin{enumerate}
\item Let $c_u(2,k,q)$ be the number of simultaneous conjugacy classes of $k$-tuples of commuting matrices in $U_2(\Fq)$, and $c_g(2,k,q)$ the same for $GL_2(\Fq)$. The, $c_u(2,k,q)$ can be obtained from $c_g(2,k,q)$ by swapping the $(q-1)$'s with $(q+1)$'s in the formula of $c_g(2,k,q)$ for any $k$.
 \item Using SageMath~\cite{SAGE}, we show that for $k\leq 20$ the number of simultaneous conjugacy classes of commuting $k$-tuples in $U_2(\Fq)$, $U_3(\Fq)$, and $Sp_2(\Fq)$, are  polynomials in $q$ with non-negative integer coefficients.
\end{enumerate}
\end{corollary}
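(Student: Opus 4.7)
The plan is to deduce both assertions from Theorem \ref{TMain} together with the explicit branching matrices computed in Theorems \ref{main1} and \ref{Main2} (i.e.\ Tables \ref{tableI}, \ref{tableII}, \ref{tableIII}).

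For part (1), I would place the branching matrix $B_{U_2(\Fq)}$ (Table \ref{tableI}) alongside the branching matrix $B_{GL_2(\Fq)}$ obtained in \cite{Sh1, Sh2}. There is a natural bijection between class types in $GL_2(\Fq)$ and $U_2(\Fq)$ (central, non-central unipotent, split-type semisimple, non-split-type semisimple), and under this bijection Ennola duality predicts the sizes of the corresponding centralizers and class sizes differ by the formal swap $q-1 \leftrightarrow q+1$: the split torus $(\Fq^\times)^2$ of order $(q-1)^2$ in $GL_2$ is replaced by the unitary torus of order $(q+1)^2$ in $U_2$, the unipotent centralizer of order $q(q-1)$ is replaced by one of order $q(q+1)$, and so on. The first task is to write down the bijection of class types precisely and to check, entry by entry, that $B_{U_2(\Fq)}$ is obtained from $B_{GL_2(\Fq)}$ by this substitution. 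Once that is in place, Theorem \ref{TMain} gives $c_u(2,k,q) = \mathbf{1}\cdot B_{U_2(\Fq)}^{k-1}\cdot e_1$ and $c_g(2,k,q)= \mathbf{1}\cdot B_{GL_2(\Fq)}^{k-1}\cdot e_1$, both polynomial expressions in the matrix entries; consequently the same substitution $q-1\leftrightarrow q+1$ carries the formula for $c_g(2,k,q)$ to that for $c_u(2,k,q)$ for every $k$.

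For part (2), I would use the same formula from Theorem \ref{TMain} together with the branching matrices already listed in Tables \ref{tableI}, \ref{tableII}, and \ref{tableIII}. A SageMath routine computes $\mathbf{1}\cdot B_G^{k-1}\cdot e_1$ symbolically in the variable $q$ for each $G\in\{U_2(\Fq), U_3(\Fq), Sp_2(\Fq)\}$ and each $1\leq k \leq 20$, collects the result as an element of $\Z[q]$, and then inspects its coefficient list to confirm each coefficient is a non-negative integer. This is a finite, deterministic verification once the branching matrices have been established.

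The main obstacle is in part (1): fixing the bijection of class types so that the substitution statement is actually well-defined. Centralizer orders and class sizes mix factors of $q$, $q-1$, and $q+1$ in ways that are not uniformly monomial, and the branching entries themselves are obtained from counting classes inside centralizer groups. One has to be confident that each factor in each entry comes from a well-identified geometric source (a torus, a unipotent part, or a finite quotient) in order to see why only $q-1$ and $q+1$ are exchanged while the pure powers of $q$ are left untouched. Once this dictionary is set up, the comparison reduces to a finite, mechanical check against the $GL_2$ table of \cite{Sh1, Sh2}.
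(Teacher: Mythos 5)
Your strategy for part (2) is exactly the paper's: plug the branching matrices into the formula of Lemma~\ref{lemmack} and verify the coefficient lists in SageMath for $k\le 20$; nothing more is needed there (though note the exponent is $k$, not $k-1$: by Lemma~\ref{lemmack}, $c_G(k)=\mathbf{1}\cdot B_G^{k}\cdot e_1$).

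For part (1) your route differs from the paper's in a way that creates one concrete wrinkle you should address. The paper does not compare the two branching matrices entry by entry; it computes the rational generating functions $h_u(2,t)$ and $h_G(2,t)$ in closed form, expands each $c_u(2,k,q)$ and $c_g(2,k,q)$ as an explicit sum of products of the four eigenvalues $q\pm1$, $q(q\pm1)$, $(q\pm1)^2$, $q^2-1$ over compositions $a+b+c+d$, and reads the duality off those two displayed formulas. Your plan to verify instead that $B_{U_2}$ is the literal $(q-1)\leftrightarrow(q+1)$ substitute of $B_{GL_2}$ under the ``natural'' type bijection does not quite go through as stated: in the first column of $Bg_2$ the split-semisimple entry is $\binom{q-1}{2}=\tfrac{(q-1)(q-2)}{2}$ and the anisotropic entry is $\tfrac{q(q-1)}{2}$, and the substitution sends these to $\tfrac{(q+1)(q-2)}{2}=\tfrac{q^2-q-2}{2}$ and $\tfrac{q(q+1)}{2}=\binom{q+1}{2}$ respectively --- which are the $U_2$ entries for the \emph{opposite} types. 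So the matrices match only after also transposing the two semisimple types (Ennola duality interchanges split and anisotropic tori), and you must then observe that $\mathbf{1}\cdot B^{k}\cdot e_1$ is invariant under simultaneously permuting rows and columns fixing the central index. Alternatively, follow the paper and work with the expanded formulas, where the symmetric sum over $a+b+c+d=k-j$ absorbs this relabelling automatically. Either way, since the claim is about ``the formula'' --- an inherently presentation-dependent statement --- you must, as the paper does, exhibit the specific formulas on which the substitution acts; your instinct that each factor must be traced to its geometric source (torus versus unipotent part) is the right way to make that choice of presentation canonical.
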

\noindent It would be interesting to prove the second result in this corollary, as the computations suggest, in its full generality. We hope to explore this in future.

\begin{table}
\caption{Branching table of $U_2(\Fq)$}
\label{tableI}
\begin{center}
\begin{tabular}{|c|c|c|c|}
\hline
$(1,1)_1$ & $(2)_1$ & $(1)_1(1)_1$ & $(1)_2$\\ \hline 
$q+1$ & $0$ & $0$ & $0$\\
$q+1$ & $q(q+1)$ & $0$ & $0$ \\
${q+1}\choose 2$ & $0$ & $(q+1)^2$ & $0$ \\
$\frac{q^2-q-2}{2}$ & $0$ & $0$ & $q^2 - 1$ \\\hline 
\end{tabular}
\end{center}
\end{table}

\begin{table}
\caption{Branching table of $U_3(\Fq)$}
\label{tableII}
\begin{center}
\SMALL  
\begin{tabular}{|c|c|c|c|c|c|c|c|}
\hline
$(1,1,1)_1$ & $(2,1)_1$ & $(1,1)_1(1)_1$ & $(3)_1$ & $(2)_1(1)_1$ & $ (1)_1(1)_1(1)_1$ & $(1)_2(1)_1$ & $(1)_3$ \\ \hline
$q+1$ & $0$ & $0$ & $0$ & $0$ & $0$ & $0$ & $0$ \\
$q+1$ & $q(q+1)$ & $0$ & $0$ & $0$ & $0$ & $0$ & $0$ \\
$q(q+1)$ & $0$ & $(q+1)^2$ & $0$ & $0$ & $0$ & $0$ & $0$\\
$q+1$ & $q^2 - 1$ & $0$ & $(q+1)q^2$ & $0$ & $0$ & $0$ & $0$ \\
$q(q+1)$ & $(q+1)q^2$ & $(q+1)^2$ & $0$ & $q^2(q+1)$ & $0$ & $0$ & $0$ \\
${q+1}\choose 3$ & $0$ &$(q+1){{q+1}\choose 2}$ & $0$ & $0$ & $(q+1)^3$ & $0$ & $0$ \\
$\frac{(q+1)(q^2-q-2)}{2}$ & $0$ & $\frac{(q+1)(q^2-q-2)}{2}$ & $0$ & $0$ & $0$ & $(q+1)(q^2-1)$& $0$\\
$\frac{q^3-q}{3}$ & $0$ & $0$ & $0$& $0$& $0$& $0$ &  $q^3 + 1$ \\\hline 
\end{tabular}
\end{center}
\end{table}

\begin{table}
\caption{Branching table of $Sp_2(\Fq)$}
\label{tableIII}
\begin{center}
\begin{tabular}{|c|c|c|c|c|}
\hline
$C$& $A_1$ & $A_2$ & $D$ & $Ir$ \\ \hline
$2$ & $0$ & $0$ & $0$ & $0$ \\
$2$ & $2q$ & $0$ & $0$ & $0$ \\
$2$ & $0$ & $2q$ & $0$ & $0$ \\
$\frac{q-3}{2}$ & $0$ & $0$ & $q - 1$ & $0$ \\
$\frac{q-1}{2}$ & $0$ & $0$ & $0$ & $q + 1$ \\ \hline 
\end{tabular}
\end{center}
\end{table}

\begin{landscape}
\begin{table}
\caption{Branching table of $Sp_4(\Fq)$}
\label{tableIV}
\begin{center}
$$    \left(\begin{smallmatrix}A_1&A_2&A_3&A'_3&A_4&B_1&B_2&B_3&B_4&B_5&B_6&B_7&B_8&B_9&C_1&C_2&C_3&C_4&D_1&D_2&D_3&N_1&N_2&N_3\\ \hline
&&&&&&&&&&&&&&&&&&&&&&& \\ 
     2&0&0&0&0&0&0&0&0&0&0&0&0&0&0&0&0&0&0&0&0&0&0&0\\
     4&2q&0&0&0&0&0&0&0&0&0&0&0&0&0&0&0&0&0&0&0&0&0&0\\
     2&0&2q&0&0&0&0&0&0&0&0&0&0&0&0&0&0&0&0&0&0&0&0&0\\
     2&0&0&2q&0&0&0&0&0&0&0&0&0&0&0&0&0&0&0&0&0&0&0&0\\
     4&4q-4&0&0&2q^2&0&0&0&0&0&0&0&0&0&0&0&0&0&0&0&0&0&0&0\\
     \frac{q^2-1}{4}&0&0&0&0&q^2+1&0&0&0&0&0&0&0&0&0&0&0&0&0&0&0&0&0&0\\
     \frac{(q-1)^2}{4}&0&0&0&0&0&q^2-1&0&0&0&q(q-2)&0&\frac{q(q-1)}{2}&0&0&0&0&0&0&0&0&0&0&0\\
     \frac{q^2-8q+15}{8}&0&0&0&0&0&0&(q-1)^2&0&0&0&0&\frac{q^2-3q+2}{2}&0&0&0&\frac{q^2-4q+3}{2}&0&\frac{(q-3)^2}{2}&0&0&0&0&0\\
     \frac{q^2-4q + 3}{8}&0&0&0&0&0&0&0&(q+1)^2&0&q+1&0&0&0&\frac{q^2-1}{2}&0&0&0&\frac{(q-1)^2}{4}&0&0&0&0&0\\
     \frac{q^2-4q+3}{4}&0&0&0&0&0&0&0&0&q^2-1&0&0&0&0&\frac{q^2-2q-3}{2}&0&\frac{(q-1)^2}{2}&0&\frac{q^2-4q+3}{2}&0&0&0&0&0\\
     \frac{q-1}{2}&0&0&0&0&0&0&0&0&0&q+1&0&0&0&0&0&0&0&0&0&0&0&0&0\\
     \frac{q-1}{2}&0&0&\frac{q^2-q}{2}&0&0&0&0&0&0&q+1&q^2+q&0&0&0&0&0&0&0&0&0&0&0&0\\
     \frac{q-3}{2}&0&0&0&0&0&0&0&0&0&0&0&q-1&0&0&0&0&0&0&0&0&0&0&0\\
     \frac{q-3}{2}&0&\frac{q^2-3q}{2}&0&0&0&0&0&0&0&0&0&q-1&q^2-q&0&0&0&0&0&0&0&0&0&0\\
     q-1&0&0&0&0&0&0&0&0&0&0&0&0&0&2q+2&0&0&0&2q-2&0&0&0&0&0\\
     2q-2&q^2-q&0&0&0&0&0&0&0&0&0&0&0&0&4q+4&2q^2 + 2q&0&0&4q-4&q^2-q&0&0&0&0\\
     q-3&0&0&0&0&0&0&0&0&0&0&0&0&0&0&0&2(q-1)&0&2q-6&0&0&0&0&0\\
     2q-6&q^2-3q&0&0&0&0&0&0&0&0&0&0&0&0&0&0&4q-4&2q^2-2q&4q-12&q^2-3q&0&0&0&0\\
     1&0&0&0&0&0&0&0&0&0&0&0&0&0&0&0&0&0&4&0&0&0&0&0\\
     4&2q&0&0&0&0&0&0&0&0&0&0&0&0&0&0&0&0&16&4q&0&0&0&0\\
     4&4q&2q^2&2q^2&0&0&0&0&0&0&0&0&0&0&0&0&0&0&16&8q&4q^2&2q^2&0&0\\
     0&4q&2q^2-2q&2q^2-2q&0&0&0&0&0&0&0&0&0&0&0&0&0&0&0&0&0&2q^2&0&0\\
     0&4&0&0&0&0&0&0&0&0&0&0&0&0&0&0&0&0&0&0&0&0&2q^3&0\\
     0&0&q^2+3q&q^2-q&0&0&0&0&0&0&0&0&0&0&0&0&0&0&0&0&0&q^2-q&0&2q^3
    \end{smallmatrix}\right)
$$    \end{center}
\end{table}
\end{landscape}

\subsection{Notations Used}
For any $k$-tuple $(A_1, A_2, \ldots, A_k)$ of pair-wise commuting matrices in $U_{n}(\Fq)$, the common centralizer of the tuple is the intersection of the centralizers of the $A_i$'s i.e., $\bigcap_{i = 1}^k \Zu{n}(A_i)$. We denote this by $\Zu{n}(A_1,A_2, \ldots, A_k)$.
The number of simultaneous conjugacy classes of $k$-tuples of commuting unitary $n \times n$ matrices with entries in $\Fqq$ is denoted as $c_u(n,k,q)$. For any $k$-tuple $(A_1, A_2, \ldots, A_k)$ of pair-wise commuting matrices in $Sp_{2l}(\Fq)$, the common centralizer $\bigcap_{i=1}^k Z_{Sp_{2l}(\Fq)}(A_i)$ is denoted by $Z_{Sp_{2l}}(\Fq)(A_1,\ldots, A_k)$.
The number of simultaneous conjugacy classes of $k$-tuples of commuting $2l\times 2l$ symplectic matrices over $\Fq$ is denoted as $c_s(2l,k,q)$.

\subsection*{Acknowledgments} The authors would like to thank Amritanshu Prasad, IMSc Chennai, for his interest in this work. We thank the referee(s) for their feedback and suggestions for revision which helped the paper improve. 

\section{Preliminaries for the unitary groups}\label{SecPrelim}
We need to understand the conjugacy classes in unitary groups. We briefly recall that here.
\subsection{Self $U$-reciprocal and $U$-irreducible Polynomials}
In this section, we shall discuss polynomials over $\Fqq$, called \emph{self $U$-reciprocal} polynomials. Let $f(t) \in \Fqq[t]$, be a monic polynomial $f(t) = t^d + a_{d-1}t^{d-1} + \cdots + a_1t + a_0$. 
\begin{defn}
Let $f(t) \in \Fqq[t]$ be a monic polynomial as above. Let $\tilde{f}(x)$ be the polynomial $\tilde{f}(t) = f(0)^{-q}t^d\left (t^{-d} + a_{d-1}^qt^{-d+1} + \cdots + a_1^qt^{-1} + f(0)^q \right)$. The polynomial $f(t)$ is said to be \emph{self $U$-reciprocal} if $\tilde{f}(t) = f(t)$. 
\end{defn}
\noindent In~\cite{BS} these polynomials are used to understand conjugacy of centralizer subgroups in unitary groups. We shall also define \emph{$U$-irreducible} polynomials.
\begin{defn}
We say that $f(t)\in \Fqq[t]$ is \emph{$U$-irreducible} if there is an $x \in \overline{\bmf{F}}$, the algebraic closure of $\Fq$, such that $f(t) = (t - x)(t - x^{-q})(t - x^{(-q)^2})\cdots (t - x^{(-q)^{d-1}})$. 
\end{defn}
\noindent Thiem and Vinroot in~\cite{TV} used these polynomials and called it \emph{$F$-irreducible}. It is a routine exercise to check that any $U$-irreducible polynomial in $\Fqq[t]$ is a self $U$-reciprocal polynomial. We now have a useful lemma due to Ennola (see~\cite[Lemma 2]{En}), which will help us determine when a $U$-irreducible polynomial is irreducible in the usual sense. For a proof see Section 2.1 in~\cite{TV}.
\begin{lemma}
A polynomial $f(t) \in \Fqq[t]$ is $U$-irreducible if and only if either
\begin{enumerate}
\item $f(t)$ is irreducible in $\Fqq[t]$, and $f$ is self $U$-reciprocal ($\deg(f(t))$ must be odd here), or,
\item $f(t) = h(t)\tilde{h}(t)$, where $h(t)$ is irreducible in $\Fqq[t]$, but is not self $U$-reciprocal ($\deg(f(t))$) is even in this case). 
\end{enumerate}
\end{lemma}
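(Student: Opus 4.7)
The plan is to analyze the root set of $f$ in $\overline{\bmf F}^\times$ under the action of two commuting endomorphisms: $\sigma\colon y\mapsto y^{-q}$ and $\tau=\sigma^2\colon y\mapsto y^{q^2}$, the latter being the Frobenius generator of $\mathrm{Gal}(\overline{\bmf F}/\Fqq)$. Under this dictionary, irreducible factors of $f$ over $\Fqq$ correspond to $\tau$-orbits on the set of roots, while ``self $U$-reciprocal'' and ``$U$-irreducible'' are statements about the $\sigma$-action. My first task is to verify by direct substitution that the roots of $\tilde f$ are exactly $\{\sigma(\alpha) : f(\alpha)=0\}$; plugging $\alpha^{-q}$ into the formula for $\tilde f$ and clearing denominators reduces to $f^{(q)}(\alpha^q)=0$, which is just the $q$-th power of $f(\alpha)=0$. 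Consequently $f$ is self $U$-reciprocal iff its root set is $\sigma$-stable, and the definition of $U$-irreducibility says the root set is a single $\sigma$-orbit of length $d=\deg f$—in particular $U$-irreducible polynomials are automatically self $U$-reciprocal.

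For the forward implication I would take a $U$-irreducible $f$ with roots $\{x,\sigma x,\ldots,\sigma^{d-1}x\}$, on which $\sigma$ has exact order $d$, so $\langle\tau\rangle\le\langle\sigma\rangle$ has index $\gcd(2,d)$ and each $\tau$-orbit has size $d/\gcd(2,d)$. When $d$ is odd there is a single $\tau$-orbit, so $f$ is $\Fqq$-irreducible, yielding case~(1). When $d$ is even, the two $\tau$-orbits are $\{\sigma^{2i}x\}$ and $\{\sigma^{2i+1}x\}$; letting $h$ be the $\Fqq$-irreducible polynomial whose root set is the first orbit, the first paragraph shows that $\tilde h$ has root set the second orbit, so $f=h\tilde h$ with $h\ne\tilde h$ (disjoint root sets), matching case~(2).

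For the converse, in case~(1) irreducibility gives a single $\tau$-orbit of size $d$ and self $U$-reciprocity makes the root set $\sigma$-stable; the $\sigma$-orbit of any root then sits between the $\tau$-orbit and the $d$-element root set, hence equals both, which is exactly $U$-irreducibility. In case~(2), writing $m=\deg h$, the $\sigma$-orbit of a root $y$ of $h$ contains both $y$ and $\sigma y$, which lie in the two $\tau$-orbits making up the root sets of $h$ and $\tilde h$ respectively; these $\tau$-orbits are disjoint because $h\ne\tilde h$, and since the $\sigma$-orbit is $\tau$-stable it must contain both in full, therefore coinciding with the whole $2m$-element root set of $f$ and yielding $U$-irreducibility. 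The one delicate ingredient is invoking the hypothesis $h\ne\tilde h$ at this last step to ensure $\sigma y$ lies outside the roots of $h$; modulo that point the orbit-tracking is pure bookkeeping, and the parity assertions stated in the lemma fall out of the above case split.
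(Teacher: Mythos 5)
Your proof is correct and complete. Note that the paper itself does not prove this lemma: it attributes the statement to Ennola and refers the reader to Section 2.1 of Thiem--Vinroot for a proof, so there is no in-paper argument to compare against. Your orbit-theoretic argument is essentially the standard one from those sources: the identity $\tilde f(t)=f(0)^{-q}t^d f^{(q)}(1/t)=\prod_i\bigl(t-\alpha_i^{-q}\bigr)$ shows that passing to $\tilde f$ acts on root sets by $\sigma\colon y\mapsto y^{-q}$, and then everything reduces to comparing $\sigma$-orbits with $\tau$-orbits for $\tau=\sigma^2$ the $\Fqq$-Frobenius, with the index $\gcd(2,d)$ of $\langle\tau\rangle$ in $\langle\sigma\rangle$ producing the odd/even dichotomy. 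All the delicate points are handled: the disjointness of the root sets of $h$ and $\tilde h$ in case (2) follows from $h\neq\tilde h$ because distinct monic irreducibles are coprime, and the $\tau$-stability of a $\sigma$-orbit forces it to swallow both $\tau$-orbits. The only housekeeping worth making explicit is that $f(0)\neq 0$ (so that $\tilde f$ is defined, the roots are nonzero, and $\sigma$ is a genuine bijection on them); this is automatic in the paper's setting, where $f$ arises from invertible matrices.
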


Next, we need to find out the number of $U$-irreducible polynomials of any given degree. Consider the polynomial $a_n(t) = t^{q^n - (-1)^n} - 1$. Any $x$ such that $x^{(-q)^d} = x$, for $d$ such that $d \mid n$, is a root of $a_n(t)$. Thus, every root of a $U$-irreducible polynomial whose degree divides $n$, is a root of $a_n(t)$. Hence, $a_n(t)$ is a product of all $U$-irreducible polynomials of degrees that divide $n$, i.e., 
\begin{displaymath}
a_n(t) = t^{q^n -(-1)^n} - 1 = \prod_{d \mid n}\prod_{\deg f(t)= d}f(t).
\end{displaymath}
Thus, $q^n - (-1)^n = \sum_{d\mid n}d\hat{\phi}_d(q)$, where $\hat{\phi}_d(q)$ is the number of $U$-irreducible polynomials over $\Fqq$, of degree $d$ (here $\phi_d(q)$ denotes the number of irreducible, in the usual sense, polynomials of degree $d$ over $\Fq$). Applying M\"obius inversion, we get the number of $U$-irreducible polynomials of degree $d$ over field $\Fqq$ to be
\begin{displaymath}
\hat{\phi}_n(q) = \frac{1}{n}\sum_{d\mid n}\mu\left(\frac{n}{d}\right)(q^d - (-1)^d).
\end{displaymath}
Hence we have the following lemma:
\begin{lemma}
$\hat{\phi}_n(q) = \phi_n(q)$, for $n \geq 3$. For $n = 1$ and $2$, we have
\begin{displaymath}
\hat{\phi}_1(q) = q + 1 \text{ and } \hat{\phi}_2(q) = \frac{q^2 - q - 2}{2}.
\end{displaymath}
\end{lemma}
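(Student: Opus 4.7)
The plan is to compare the formula for $\hat\phi_n(q)$ derived just above, namely
$$\hat\phi_n(q)=\frac{1}{n}\sum_{d\mid n}\mu(n/d)\bigl(q^d-(-1)^d\bigr),$$
with the classical expression $\phi_n(q)=\frac{1}{n}\sum_{d\mid n}\mu(n/d)q^d$. Subtracting gives
$$\phi_n(q)-\hat\phi_n(q)=\frac{1}{n}\sum_{d\mid n}\mu(n/d)(-1)^d=:\frac{S(n)}{n},$$
so the lemma reduces entirely to evaluating the arithmetic sum $S(n)$ and, in particular, to showing that $S(n)=0$ for every $n\geq 3$. For $n=1,2$ a direct substitution into the formula for $\hat\phi_n(q)$ recovers $\hat\phi_1(q)=q-(-1)=q+1$ and
$$\hat\phi_2(q)=\tfrac{1}{2}\bigl[-(q+1)+(q^2-1)\bigr]=\tfrac{q^2-q-2}{2},$$
matching the statement.

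For $n\geq 3$ I substitute $e=n/d$ and split $S(n)=\sum_{e\mid n}\mu(e)(-1)^{n/e}$ by the parity of $n$. If $n$ is odd, every $n/e$ is odd, so $S(n)=-\sum_{e\mid n}\mu(e)=-[n=1]=0$. If $n$ is even, write $n=2^{a}m$ with $m$ odd and $a\geq 1$; since $\mu(e)=0$ whenever $4\mid e$, only divisors of the form $e=j$ or $e=2j$ with $j\mid m$ contribute, and I would rewrite
$$S(n)=\sum_{j\mid m}\mu(j)\bigl[(-1)^{n/j}-(-1)^{n/(2j)}\bigr].$$
When $a\geq 2$ both exponents $n/j=2^{a}(m/j)$ and $n/(2j)=2^{a-1}(m/j)$ are even, so every bracket vanishes and $S(n)=0$. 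When $a=1$ each bracket equals $1-(-1)=2$, giving $S(n)=2\sum_{j\mid m}\mu(j)=2[m=1]$; but $a=1$ together with $m=1$ forces $n=2$, which is excluded under $n\geq 3$. Thus $S(n)=0$ throughout the range $n\geq 3$, which yields $\hat\phi_n(q)=\phi_n(q)$ as claimed.

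The only real obstacle is the parity bookkeeping in the even case; once the sum is split as above, everything collapses through the standard identity $\sum_{e\mid N}\mu(e)=[N=1]$ together with the vanishing of $\mu$ on multiples of $4$. No new ingredients beyond the product formula for $a_n(t)$ already established in the excerpt and one application of Möbius inversion are needed.
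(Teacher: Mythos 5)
Your proposal is correct and follows the same route the paper takes: both start from the M\"obius-inverted formula $\hat{\phi}_n(q) = \frac{1}{n}\sum_{d\mid n}\mu(n/d)(q^d-(-1)^d)$ obtained from the factorization of $a_n(t)$ and compare it with the classical count $\phi_n(q)=\frac{1}{n}\sum_{d\mid n}\mu(n/d)q^d$. The paper simply asserts the lemma after deriving the formula, whereas you supply the omitted arithmetic check that $\sum_{e\mid n}\mu(e)(-1)^{n/e}=0$ for $n\ge 3$; your case analysis (odd $n$, and even $n$ split by the $2$-adic valuation) is accurate, as are the direct evaluations for $n=1,2$.
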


\subsection{Similarity class types and centralizers.}
In \cite{Sh1}, we discussed that there is a bijection between similarity classes of $M_n(\Fq)$, and the following set of maps:
\begin{displaymath}
\{ \nu\colon \Irr(\Fq[t]) \rightarrow \Lambda \mid \text{$\sum_{f(t) \in \Irr(\Fq[t])} |\nu(f)|\deg(f) = n$.} \},
\end{displaymath}
where $\Lambda$ denotes the set of integer partitions. For unitary matrices, the characteristic polynomials are products of $U$-irreducible polynomials in $\Fqq[t]$. In case of $U_n(\Fq)$, we have a bijection between similarity classes of $U_n(\Fq)$, and the following set of maps:
\begin{displaymath}
\{ \nu\colon  U\Irr(\Fqq[t]) \rightarrow \Lambda \mid \text{$\sum_{f(t) \in \Irr(\Fq[t])} |\nu(f)| \deg(f) = n$.} \},
\end{displaymath}
where $U\Irr$ denotes the set of $U$-irreducible polynomials in $\Fqq[t]$. We can now bring in Green's definition (see~\cite{Gre}) of similarity class types, for the unitary groups. Before stating Green's definition of types, we define for non-negative integer $d$, and partition $\mu$, and a function $\nu\colon U\Irr(\Fqq[t]) \rightarrow \Lambda$, the number $r_\nu(\mu, d)$ as follows: 
\begin{displaymath}
r_\nu(\mu, d) = |\left \{f(t)\in U\Irr(\Fqq[t]) \mid \deg(f)= d,~\nu(f) = \mu \right\}|.
\end{displaymath}
\begin{defn}[Green]
Let $A$ and $B$ be two unitary matrices over $\Fqq$. Let $\nu_A$ denote the partition function associated with the similarity class of $A$, and $\nu_B$, be that associated with $B$. We say that $A$ and $B$ are of the same \emph{similarity class type} (or simply \emph{type}), if $r_{\nu_A}(\mu, d) = r_{\nu_B}(\mu, d)$ for all partitions $\mu$, and all $d \geq 0$.
\end{defn}
\noindent An alternative definition of similarity class types, which can be shown to be compatible with Green's definition is this. 
\begin{defn}
Let $(A_1, \ldots, A_k)$, and $(B_1, \ldots, B_l)$, be tuples of commuting unitary matrices. We say that these two tuples are of the same similarity class type if their respective common centralizers in $U_n(\Fq)$ are conjugate in $GL_n(\Fqq)$.
\end{defn} 
\noindent A type is written as a partition in the following way,  
\begin{displaymath}
\left (\lbd^{(1)}_1,\ldots, \lbd^{(1)}_{l_1} \right)_{d_1},  \ldots, \left(\lbd^{(j)}_1,\ldots, \lbd^{(j)}_{l_1}\right)_{d_j} ,
\end{displaymath}
where $\displaystyle\sum_{i = 1}^j |\lbd^{(i)}|d_i = n.$

For $n, k\geq 1$, we denote by $U_n(\Fq)^{(k)}$, the set of $k$-tuples of commuting matrices in $U_n(\Fq)$. We denote by $c_u(n,k,q)$, the number of simultaneous conjugacy classes of $U_n(\Fq)^{(k)}$. We can see that 
\begin{displaymath}
c_u(n,k,q) = \sum_{Z \sbteq U_n(\Fq)}s_Z c_Z(k-1),
\end{displaymath}
where $Z$ varies over the subgroups of $U_n(\Fq)$, the number $s_Z$ is the number of conjugacy classes of $U_n(\Fq)$ whose centralizer is conjugate to $Z$ in $GL_n(\Fqq)$, and $c_Z(k-1)$ is the number of orbits for the simultaneous conjugation action of $Z$ on $Z^{(k-1)}$. So, for example when $k=2$, given a matrix $A \in U_n(\Fq)$, to know the number of simultaneous conjugacy classes of pairs of commuting matrices, with $A$ as the first coordinate, it suffices to know the orbits for the conjugation action of the centralizer $\Zu{n}(A)$ on itself. In what follows we determine the branches for $U_2(\Fq)$ and $U_3(\Fq)$. In the similar way we define branches for the symplectic groups in later sections. 

\section{Branching rules for $U_2(\Fq)$}

In this section we explore the $2 \times 2$ unitary groups. There are a total of $q^2 + 2q$ similarity classes in $U_2(\Fq)$. We begin with explicit description of the canonical forms of the various types of classes in $U_2(\Fq)$. Recall that a matrix $A\in GL_2(\Fqq)$ is in $U_2(\Fq)$ if ${}^tA\beta \overline{A} = \beta$ for a fixed hermitian form $\beta$. In what follows we work with different hermitian form matrices while dealing with a particular class, depending on the one which makes our computations easier. We usually take the hermitian matrix to be either $I_2$, the identity matrix, or $\beta = \begin{pmatrix}  & 1\\ 1 &  \end{pmatrix}$. As these are equivalent over finite field, we can use any of these forms as per our convenience. Let $A = \begin{pmatrix} a_0 & a_1 \\ a_2 & a_3 \end{pmatrix} \in GL_2(\Fqq)$. Then, $A$ is unitary with respect to the hermitian form $I_2$ if it satisfies ${}^t\!A\overline{A}=I_2$. By writing $\overline{A}=\begin{pmatrix}a_0^q & a_1^q \\ a_2^q & a_3^q\end{pmatrix}$ we get the following equations to be satisfied by $A$, 
\begin{equation}\label{i1}
a_0^{q+1} + a_2^{q + 1} = 1,\ a_1^{q+1} + a_3^{q + 1} = 1,\ a_0a_1^q + a_2a_3^q = 0
\end{equation}
Similarly we get the following equations for $A\in GL_2(\Fqq)$ to be unitary with respect to the second $\beta$, 
\begin{equation}\label{b1}
a_0a_2^q + a_2a_0^q = 0,\ a_1a_3^q + a_3a_1^q = 0,\ a_0a_3^q + a_2a_1^q = 1. 
\end{equation}
Now we write down the $4$ types of classes and the canonical forms of matrices of each of the types of classes of $U_2(\Fq)$.
\begin{enumerate}
\item[$(1,1)_1$] This is $\Cent$ type corresponding to $(1,1)_1$. There are $q + 1$ such classes. This comprises of scalar matrices, $aI_2$. The matrices $aI_2 \in U_2(\Fq)$ with respect to either of the hermitian forms $I_2$ or $\beta$, if $a^{q+1} = 1$.
\item[$(2)_1$] There are $q+1$ such classes. In $GL_2(\Fqq)$, a matrix of a similarity class of this type can be written as $\begin{pmatrix} a_0 & a_1 \\  &a_0 \end{pmatrix}$. With respect to the second hermitian form $\beta$, such a matrix is unitary if it satisfies the equations~\ref{b1} which gives $a_0^{q+1} = 1$, and $a_0a_1^q + a_0^qa_1 = 0$. These equations have a non-zero solution as $q$ is an odd prime. Thus, we have a canonical form for a unitary matrix of the type $(2)_1$, of the form $\begin{pmatrix} a_0 & a_1 \\  & a_0 \end{pmatrix}$.
\item[$(1)_1(1)_1$] There are ${q+1}\choose 2$ such classes. A matrix of this type has diagonal canonical form $\begin{pmatrix} a_0 &  \\  & a_1 \end{pmatrix}$ in $GL_2(\Fqq)$, where $a_0 \neq a_1$. With respect to the hermitian form $I_2$, this diagonal matrix is in $U_2(\Fq)$ if and only if $a_0^{q+1}= a_1^{q+1} = 1$.
\item[$(1)_2$] There are $(q^2-q-2)/2$ such classes. The characteristic polynomial of a matrix of this type is an $U$-irreducible polynomial of degree $2$ in $\Fqq[t]$. Such a polynomial is of the form $(t-a)(t-a^{-q})$, where $a^{-q} \neq a$. The canonical form of a matrix of this type is 
$\begin{pmatrix} a &  \\  & a^{-q} \end{pmatrix}$.
\end{enumerate} 
\noindent Now we compute the branching rule for each type.
\begin{prop}\label{PropU211}
For a matrix $A$ of the $\Cent$ type, the branching rules are given as follows, 
\begin{center}
\begin{tabular}{cc} \hline 
 \text{Type of Branch} & \text{No. of Branches}\\ \hline
 $\Cent$ & $q + 1$\\ 
 $(2)_1$ & $q + 1$\\
 $(1)_1(1)_1$ & ${q+1}\choose 2$ \\
 $(1)_2$ & $(q^2 - q - 2)/2$. \\ \hline
 \end{tabular} \end{center}
\end{prop}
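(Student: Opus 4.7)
The plan is essentially immediate. Since $A = aI_2$ with $a^{q+1}=1$ is a scalar matrix, it lies in the center of $GL_2(\Fqq)$, and in particular in the center of $U_2(\Fq)$. Hence $\Zu{2}(A) = U_2(\Fq)$, and by the very definition of branching, the branches of the conjugacy class of $A$ are the orbits of $\Zu{2}(A)$ acting on itself by conjugation, which in this central case are just the conjugacy classes of $U_2(\Fq)$ itself.

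The proof therefore reduces to tabulating the number of conjugacy classes of each similarity class type in $U_2(\Fq)$. These counts are exactly those enumerated in the paragraph immediately preceding the proposition: $q+1$ scalar classes of type $(1,1)_1$ (one for each solution of $a^{q+1}=1$); $q+1$ classes of type $(2)_1$, indexed by the $q+1$ admissible eigenvalues $a_0$ with $a_0^{q+1}=1$; $\binom{q+1}{2}$ classes of type $(1)_1(1)_1$, indexed by unordered pairs of distinct $(q+1)$-th roots of unity; and $(q^2-q-2)/2$ classes of type $(1)_2$, one for each $U$-irreducible quadratic in $\Fqq[t]$, using the count $\hat{\phi}_2(q) = (q^2-q-2)/2$ from the earlier lemma on $U$-irreducible polynomials.

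There is no substantive obstacle here; the argument is a bookkeeping observation applied to the classification of conjugacy classes of $U_2(\Fq)$ recalled at the start of Section~\ref{SecPrelim}. As a sanity check, the four counts sum to $(q+1)^2$, which matches the total number of conjugacy classes of $U_2(\Fq)$.
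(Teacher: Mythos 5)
Your proof is correct and takes exactly the same route as the paper's: since $A$ is central, $\Zu{2}(A)=U_2(\Fq)$, so the branches are precisely the conjugacy classes of $U_2(\Fq)$, enumerated by type as in the classification preceding the proposition. Your sanity-check total $(q+1)^2=q^2+2q+1$ is consistent with the entry $c_u(2,1,q)$ in Table~\ref{T2by2}.
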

\begin{proof}
The centralizer of any matrix of the $\Cent$ type in $U_2(\Fq)$ is $U_2(\Fq)$ itself. Thus, enumerating the similarity classes in $U_2(\Fq)$ gives us the table mentioned in the statement of this proposition.
\end{proof}
\begin{prop}\label{PropU221}
For a matrix of similarity class type $(2)_1$, there are $q(q+1)$ branches of the type $(2)_1$.
\end{prop}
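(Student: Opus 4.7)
The plan is to identify $Z := \Zu{2}(A)$ for a canonical representative $A$ of type $(2)_1$, observe that $Z$ is abelian so that each element of $Z$ is its own conjugacy class, count $|Z|$, and finally verify that every resulting branch has type $(2)_1$.

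I would first take the canonical form $A = \begin{pmatrix} a_0 & a_1 \\ 0 & a_0 \end{pmatrix}$ with respect to the hermitian form $\beta = \begin{pmatrix} & 1 \\ 1 & \end{pmatrix}$, as described in the $(2)_1$ case above. Since $A$ is non-scalar, its centralizer in $GL_2(\Fqq)$ is the commutative algebra $\Fqq[A]$, whose invertible elements are precisely the upper-triangular Toeplitz matrices $B = \begin{pmatrix} b_0 & b_1 \\ 0 & b_0 \end{pmatrix}$ with $b_0 \in \Fqq^{\times}$; in particular $Z$ is abelian. Imposing the unitary conditions~(\ref{b1}) on $B$ reduces to the two equations $b_0^{q+1}=1$ and $b_0 b_1^q + b_0^q b_1 = 0$. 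Since $(b_0 b_1^q)^q = b_0^q b_1^{q^2} = b_0^q b_1$, the second condition is equivalent to $\mathrm{Tr}_{\Fqq/\Fq}(b_0 b_1^q) = 0$, an $\Fq$-linear equation in $b_1$. For each of the $q+1$ values of $b_0$ solving the first condition, the map $b_1 \mapsto \mathrm{Tr}_{\Fqq/\Fq}(b_0 b_1^q)$ is a surjective $\Fq$-linear map $\Fqq \to \Fq$ with kernel of size $q$, so $|Z| = q(q+1)$.

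Because $Z$ is abelian, the number of conjugacy classes in $Z$ is $|Z| = q(q+1)$, giving $q(q+1)$ branches in total. To identify their type, I would observe that for any $c \in Z$ the common centralizer of the pair $(A,c)$ in $U_2(\Fq)$ is $Z \cap \Zu{2}(c)$; abelianness of $Z$ forces $Z \subseteq \Zu{2}(c)$, so this intersection equals $Z = \Zu{2}(A)$. Hence every branch has common centralizer conjugate in $GL_2(\Fqq)$ to $\Zu{2}(A)$ and is therefore of similarity class type $(2)_1$. The only step requiring any real attention is recognizing that the off-diagonal unitary condition on $B$ is exactly a single surjective trace equation; once that is in hand, both the counting and the type identification are immediate from the abelianness of $Z$.
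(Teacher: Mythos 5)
Your proof is correct and follows essentially the same route as the paper: compute $\Zu{2}(A)$ explicitly with respect to the second hermitian form, note it is abelian of order $q(q+1)$, and conclude that each of its elements is a singleton orbit whose common centralizer with $A$ is again $\Zu{2}(A)$, hence of type $(2)_1$. The only difference is that you justify the count of $q$ admissible $x_1$ per $x_0$ via the surjective trace map $\mathrm{Tr}_{\Fqq/\Fq}(b_0 b_1^q)$, a detail the paper asserts without proof.
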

\begin{proof}
Let $A$ be a unitary matrix of type $(2)_1$. Then, $A$ has the canonical form $\begin{pmatrix} a_0 & a_1\\  & a_0 \end{pmatrix}$ where $a_0^{q+1} = 1$, and a fixed $a_1$, which satisfies $a_0a_1^q + a_0^qa_1 = 0$. The centralizer of $A$ in $U_{2}(\Fq)$ is $\left\{\begin{pmatrix}x_0 & x_1 \\  & x_0\end{pmatrix} \mid  x_0 \neq 0  \right\} \cap U_{2}(\Fq)$. In this case we work with the second hermitian form $\beta$, we have $x_0^{q +1} = 1$, and $x_1$ satisfies $x_0x_1^q + x_1x_0^q = 0$. For each $x_0$, there are $q$ such $x_1$. So, $\Zu{2}(A) = \left\{\begin{pmatrix}x_0 & x_1 \\  & x_0\end{pmatrix} \mid x_0^{q+1} = 1 {\rm \ and\ } x_0x_1^q + x_1x_0^q = 0 \right\}$. We note that $\Zu{2}(A)$ is a commutative subgroup of $U_{2}(\Fq)$ of size $q(q+1)$. Thus, each element of $\Zu{2}(A)$ is an orbit for the conjugation of $\Zu{2}(A)$ on itself. Each of these orbits has $\Zu{2}(A)$ as the centralizer. Thus, $A$ has $q(q+1)$ branches of the type $(2)_1$.
\end{proof}
\begin{prop}\label{PropU112}
A matrix of type $(1)_1(1)_1$ has $(q+1)^2$ branches of the type $(1)_1(1)_1$.
\end{prop}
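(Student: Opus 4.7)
The plan is to explicitly compute the centralizer $\Zu{2}(A)$ for a matrix $A$ of type $(1)_1(1)_1$, observe that it is abelian of the expected order, and then verify that every one of its conjugacy classes contributes a branch of type $(1)_1(1)_1$.

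First I would put $A$ in its canonical diagonal form $\mathrm{diag}(a_0, a_1)$ with $a_0 \neq a_1$ and $a_0^{q+1} = a_1^{q+1} = 1$, working with the hermitian form $I_2$. The centralizer in $GL_2(\Fqq)$ of a diagonal matrix with distinct eigenvalues is the full diagonal subgroup, so intersecting with $U_2(\Fq)$ by imposing the equations~\ref{i1} shows that $\Zu{2}(A)$ consists precisely of the diagonal matrices $\mathrm{diag}(x_0, x_1)$ with $x_0^{q+1} = x_1^{q+1} = 1$. Hence $|\Zu{2}(A)| = (q+1)^2$ and this subgroup is abelian.

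Being abelian, $\Zu{2}(A)$ acts trivially on itself by conjugation, so every element is its own orbit, producing exactly $(q+1)^2$ branches. To identify the type of each branch I would use the common-centralizer characterization of similarity class type. For any $B = \mathrm{diag}(x_0, x_1) \in \Zu{2}(A)$, the matrix $B$ is diagonal in $GL_2(\Fqq)$ and is therefore centralized by the entire diagonal torus; in particular $\Zu{2}(A) \subseteq \Zu{2}(B)$, so $\Zu{2}(A, B) = \Zu{2}(A) \cap \Zu{2}(B) = \Zu{2}(A)$. Since $\Zu{2}(A)$ is the centralizer of a matrix of type $(1)_1(1)_1$ in $U_2(\Fq)$, the pair $(A, B)$ is itself of type $(1)_1(1)_1$.

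The only minor subtlety is noticing that the $q+1$ scalar elements $B = xI_2 \in \Zu{2}(A)$ (with $x^{q+1}=1$) also contribute branches of type $(1)_1(1)_1$ rather than $(1,1)_1$, because the common centralizer of the pair $(A, xI_2)$ remains the diagonal torus rather than expanding to all of $U_2(\Fq)$. There is no genuine obstacle: the enumeration collects all $(q+1)^2$ diagonal unitary matrices, and each one pairs with $A$ to produce a distinct branch of type $(1)_1(1)_1$, giving the claimed count.
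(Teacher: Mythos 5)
Your proposal is correct and follows essentially the same route as the paper: identify $\Zu{2}(A)$ as the diagonal maximal torus $\{\mathrm{diag}(x_0,x_1) \mid x_0^{q+1}=x_1^{q+1}=1\}$ of order $(q+1)^2$, note it is commutative so each element is its own orbit, and conclude there are $(q+1)^2$ branches of type $(1)_1(1)_1$. Your extra remark that even the scalar elements $xI_2$ yield branches of type $(1)_1(1)_1$ (because the type of a branch is determined by the common centralizer $\Zu{2}(A,B)=\Zu{2}(A)$, not by $B$ alone) is a worthwhile clarification the paper leaves implicit.
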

\begin{proof}
A matrix $A$ of type $(1)_1(1)_1$ has the canonical form $\begin{pmatrix} a_0 & \\  & a_1 \end{pmatrix}$ where $a_0^{q+1} = a_1^{q+1} = 1$, and $a_0 \neq a_1$. This is a regular semisimple element with centralizer $\Zu{2}(A) = \left\{ \begin{pmatrix}x_0 &  \\  & x_1\end{pmatrix} \mid x_0^{q+1}= x_1^{q+1} = 1\right\}$, a maximal torus which is commutative. Thus $A$ has $(q+1)^2$ branches of the type $(1)_1(1)_1$.
\end{proof}
\begin{prop}\label{PropU12}
For type $(1)_2$, there are $(q^2-1)$ branches of type $(1)_2$.
\end{prop}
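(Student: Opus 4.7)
The plan is to mirror the proofs of Propositions~\ref{PropU221} and~\ref{PropU112}: identify $\Zu{2}(A)$ for $A$ of type $(1)_2$ as an abelian (anisotropic) torus of order $q^2 - 1$, use commutativity to turn every element into a singleton orbit, and then verify that each such orbit corresponds to a branch of type $(1)_2$.

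Working with the hermitian form $\beta = \begin{pmatrix} 0 & 1 \\ 1 & 0 \end{pmatrix}$, a type-$(1)_2$ element has canonical form $A = \begin{pmatrix} a & 0 \\ 0 & a^{-q} \end{pmatrix}$ with $a \in \Fqq^*$ and $a^{q+1} \neq 1$ (so that $a \neq a^{-q}$). Since $A$ has distinct eigenvalues, its centralizer in $GL_2(\Fqq)$ is the diagonal torus, and imposing the unitarity equations~\eqref{b1} on a diagonal matrix $\begin{pmatrix} x & 0 \\ 0 & y \end{pmatrix}$ forces $xy^q = 1$, i.e.\ $y = x^{-q}$. Hence
\[
\Zu{2}(A) = \left\{ \begin{pmatrix} x & 0 \\ 0 & x^{-q} \end{pmatrix} \,:\, x \in \Fqq^* \right\}
\]
is a non-split torus isomorphic to $\Fqq^*$, and is abelian of order $q^2 - 1$.

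Commutativity of $\Zu{2}(A)$ makes every one of its elements a singleton conjugacy class under the conjugation action of $\Zu{2}(A)$ on itself, producing $q^2 - 1$ branches in total. It remains to check that the common centralizer of the pair $(A, h)$ is $\Zu{2}(A)$ for every $h \in \Zu{2}(A)$. If $h = \begin{pmatrix} x & 0 \\ 0 & x^{-q} \end{pmatrix}$ is a non-scalar element of the torus (i.e.\ $x^{q+1} \neq 1$), then applying the same computation to $h$ yields $\Zu{2}(h) = \Zu{2}(A)$, so the common centralizer is $\Zu{2}(A)$. If instead $h = sI_2$ is one of the $q+1$ scalar elements inside the torus (i.e.\ $x^{q+1} = 1$), then $\Zu{2}(h) = U_2(\Fq)$, and intersecting with $\Zu{2}(A)$ again returns $\Zu{2}(A)$. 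Either way the common centralizer is the non-split torus $\Zu{2}(A)$, so every branch is of type $(1)_2$.

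No serious obstacle is expected; the only point requiring care is the scalar bookkeeping. One might be tempted to classify the $q+1$ scalar elements inside the non-split torus as branches of type $(1,1)_1$, since a scalar matrix regarded as a $1$-tuple has centralizer $U_2(\Fq)$. However, the type of a branch is determined by the common centralizer of the associated pair $(A, h)$, and as shown above that common centralizer is $\Zu{2}(A)$ in every case. This is the same subtlety already encountered in Propositions~\ref{PropU221} and~\ref{PropU112}.
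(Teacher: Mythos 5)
Your proof is correct and follows essentially the same route as the paper: identify $\Zu{2}(A)$ as the anisotropic torus $\{\mathrm{diag}(x,x^{-q})\}\cong \Fqq^*$ of order $q^2-1$, and use its commutativity to get one singleton branch per element. The extra verification that every pair $(A,h)$ — scalar $h$ included — has common centralizer equal to this torus is a welcome bit of added care, but it does not change the argument.
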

\begin{proof}
A unitary matrix $A$ of type $(1)_2$ has the canonical form $A = \begin{pmatrix}
a &  \\  & a^{-q} \end{pmatrix}$ where $a^{-q} \neq a$ with respect to the second hermitian form $\beta$. This is a regular semisimple class and hence centralizer is an anisotropic maximal torus which is commutative. This we can also do by explicit computation and get $\Zu{2}(A) = \left\{\begin{pmatrix}x_0 &  \\  & x_0^{-q}\end{pmatrix} \mid x_0 \neq 0 \right\}$. As $\Zu{2}(A)$ is commutative, each of its members form an orbit for the conjugation action of $\Zu{2}(A)$ on itself. Thus, $|\Zu{2}(A)| = q^2-1$ and $A$ has $q^2-1$ branches of type $(1)_2$.
\end{proof}

\subsection{Proof of Theorem~\ref{main1} in the case of $U_2(\Fq)$}\label{U2Main1}

We shall write down the four similarity class types in the order: $\left \{ (1,1)_1, (2)_1,(1)_1(1)_1,(1)_2 \right\}$. From Propositions~\ref{PropU211},~\ref{PropU221},~\ref{PropU112}, and~\ref{PropU12}, the branching rules for $U_2(\Fq)$ can be summarized in a matrix or in a table with rows and columns indexed by the similarity class types chosen above. We call this matrix $BU_2$ of which entries are described in the Table~\ref{tableI}.  Given a conjugacy class type $\tau$ of $U_2(\Fq)$, the branches of a canonical matrix of type $\tau$ are given in the column corresponding to $\tau$. 
Proof of the remaining part of this theorem for the case $U_3(\Fq)$ will follow in the next section.

\subsection{The number of simultaneous similarity classes}
We define $\bmf{1} = \begin{pmatrix}1&1&1&1\end{pmatrix}$  and  $\bmf{e}_1$ is the column vector with $1$ at the first place and $0$ elsewhere. From the branching matrix $BU_2$, we get $c_u(2,k,q) = \bmf{1}(BU_2)^k \bmf{e}_1$. Table~\ref{T2by2} shows some values of $c_u(2,k,q)$.
\begin{table} 
\caption{\text{$c_u(2,k,q)$}} \label{T2by2} 
\begin{tabular}{cc} \hline 
$k$ & $c_u(2,k,q)$ \\ \hline
$1$ & $q^{2} + 2 q + 1$ \\
$2$ & $q^{4} + 3 q^{3} + 5 q^{2} + 5 q + 2$\\
$3$ & $q^{6} + 4 q^{5} + 10 q^{4} + 17 q^{3} + 16 q^{2} + 7 q + 1$\\
$4$ & $q^{8} + 5 q^{7} + 17 q^{6} + 39 q^{5} + 53 q^{4} + 43 q^{3} + 23 q^{2} + 9 q + 2$\\ \hline
\end{tabular}
\end{table} 
We consider the generating series $h_u(2,t)$ for $c_u(2,k,q)$ in $k$:
\begin{displaymath}
h_u(2,t) = \sum_{k = 0}^\infty c_u(2,k,q)t^k.
\end{displaymath}
We get  
$$
h_u(2,t)  = \sum_{k = 0}^\infty  (\bmf{1} B_2^k \bmf{e}_1) t^k  = \sum_{k = 0}^\infty \bmf{1} B_2^kt^k \bmf{e}_1 = \bmf{1} \left(\sum_{k = 0}^\infty B_2^kt^k \right)\bmf{e}_1 = \bmf{1} (I_4 - tB_2)^{-1} \bmf{e}_1
$$ 
which after further simplification we get
\begin{displaymath}
h_u(2,t) = \frac{q^{4} t^{2} -  q^{3} t^{3} + 2 q^{3} t^{2} - 3 q^{2} t^{3} + q^{2} t^{2} - 3 q t^{3} - 2 q^{2} t -  t^{3} - 2 q t + 1}{(q t + t - 1)(q^{2} t -  t - 1) (q^{2} t + q t - 1)  (q^{2} t + 2 q t + t - 1)}.
\end{displaymath}
Thus
\begin{equation}\label{GenU2}
h_u(2,t) = \frac{1-2q(q-1)t + q^2(q+1)^2t^2 - (q-1)^3t^3}{(q t + t - 1)(q^{2} t -  t - 1) (q^{2} t + q t - 1)  (q^{2} t + 2 q t + t - 1)}.
\end{equation}
\noindent As both the numerator and denominator of $h_u(2,t)$ are polynomials with integer coefficients, it is clear that $c_u(2,k,q)$ is a polynomial in $q$ with integer coefficients. From the data in table~\ref{T2by2}, and calculations of $c_u(2,k,q)$ for higher $k$, upto 20 using Sage~\cite{SAGE}, we prove a part of Corollary\ref{CorMain1}(2).

We close this section with a comparison of $c_u(2,k,q)$ with $c_g(2,k,q)$, the number of simultaneous conjugacy classes of $k$-tuples of commuting $2\times 2$ matrices in $GL_2(\Fq)$.

\begin{proof}[\bf{Proof of Corollary~\ref{CorMain1}(1)}]
For $GL_2(\Fq)$, we have the following branching matrix (from the $2\times 2$ case in \cite{Sh1}) 
\begin{displaymath}
Bg_2 = \begin{pmatrix}
(1,1)_1& (2)_1&(1)_1(1)_1&(1)_2\\ \hline
	   q-1 & 0 & 0 & 0 \\ 
	   q-1 & q(q-1) & 0 & 0 \\
	   \frac{(q-1)(q-2)}{2} & 0 & (q-1)^2 & 0 \\
	   \frac{q(q-1)}{2} & 0 & 0 & q^2-1
 	   \end{pmatrix},
\end{displaymath}
and we have $c_g(2,k,q) = \bmf{1}.Bg_2^k. \bmf{e}_1$. This leads to the generating function, $h_G(2,t)$ in $k$ for $c_g(2,k,q)$:
\begin{equation}\label{GenGL2}
h_G(2,t) = \frac{1 - 2q(q-1)t + q^2(q-1)^2t^2 - (q-1)^3t^3}{(1-(q-1)t)(1-(q-1)^2t)(1-(q^2-q)t)(1-(q^2-1)t)}
\end{equation}
\noindent From Equation~\ref{GenU2} we get that $c_u(2,k,q)$ is:
\begin{equation*}
\begin{aligned}
c_u(2,k,q) &= \sum_{a+b+c+d = k}(q+1)^a (q+1)^b(q-1)^b (q^2+q)^c(q+1)^{2d} \\
&~ - 2(q^2+q)\left(\sum_{a+b+c+d = k-1}(q+1)^a (q+1)^b(q-1)^b (q^2+q)^c(q+1)^{2d}\right)\\
&~ + (q^2+q)^2\left(\sum_{a+b+c+d = k-2}(q+1)^a (q+1)^b(q-1)^b (q^2+q)^c(q+1)^{2d}\right)  \\
&~- (q+1)^3 \left(\sum_{a+b+c+d = k-3}(q+1)^a (q+1)^b(q-1)^b (q^2+q)^c(q+1)^{2d} \right)
\end{aligned}
\end{equation*}
\noindent and from Equation~\ref{GenGL2} we get that $c_g(2,k,q)$ is:
\begin{equation*}
\begin{aligned}
c_g(2,k,q) &= \sum_{a+b+c+d = k}(q-1)^a (q-1)^{2b} (q^2-q)^c(q+1)^d(q-1)^d \\
&~ - 2(q^2-q)\left(\sum_{a+b+c+d = k-1}(q-1)^a (q-1)^{2b} (q^2-q)^c(q+1)^d(q-1)^d\right)\\
&~ + (q^2-q)^2\left(\sum_{a+b+c+d = k-2}(q-1)^a (q-1)^{2b} (q^2-q)^c(q+1)^d(q-1)^d\right)  \\
&~- (q-1)^3 \left(\sum_{a+b+c+d = k-3}(q-1)^a (q-1)^{2b} (q^2-q)^c(q+1)^d(q-1)^d \right)
\end{aligned}
\end{equation*}
From the above we can see a certain duality between $c_u(2,k,q)$ and $c_g(2,k,q)$. We can obtain $c_u(2,k,q)$ from $c_g(2,k,q)$ by swapping the $(q-1)$'s and $(q+1)$'s in the formula of $c_g(2,k,q)$.
\end{proof}

\section{Branching rule for $U_3(\Fq)$}\label{S33}

Now, we consider $3 \times 3$ unitary group $U_3(\Fq)$, which is of size $q^3(q+1)(q^2-1)(q^3+1)$. The hermitian forms, which we will consider for various similarity classes in $U_3(\Fq)$ are 
\begin{displaymath}
\beta = \begin{pmatrix}&&1\\&1&\\1&&\end{pmatrix}, \ \ \ \gamma = \begin{pmatrix}&1&\\1&&\\&&1\end{pmatrix},\text{ and $I_3$ the $3\times 3$ identity matrix.}
\end{displaymath} 
There are $8$ types of similarity classes in $U_{3}(\Fq)$. We shall list them along with their canonical forms here. These can be obtained by doing a similar calculation as in the case of $U_2(\Fq)$ done in the last section from that of in $GL_3(\Fqq)$ with respect to one of the suitable forms mentioned above.
\begin{description}
\item[$(1,1,1)_1$] The scalar matrices $aI_3$, where $a^{q+1} = 1$.
\item[$(2,1)_1$] With respect to the hermitian form $\gamma$, a unitary matrix of this type has the canonical form $\begin{pmatrix} a_0 & a_1 & \\ &a_0 &  \\ &  & a_0 \end{pmatrix}$ where $a_0^{q+1} = 1$, and $a_1$ is some fixed member of $\Fqq$ that satisfies $a_0a_1^q + a_0^qa_1 = 0$.
\item[$(1,1)_1(1)_1$] The canonical form of a unitary matrix of this type is $\begin{pmatrix} a_0I_2 & \\ & a_1 \end{pmatrix}$ where $a_0^{q+1} = a_1^{q+1} = 1$, and $a_0 \neq a_1$.
\item[$(3)_1$] Unitary matrices of this type have the canonical form $\begin{pmatrix} a_0 & a_1 & a_2\\ & a_0 & a_1 \\ & & a_0 \end{pmatrix}$ where $a_0^{q+1} = 1$, $a_1, a_2$ are fixed elements of $\Fqq$ satisfying $a_0^qa_1 + a_0a_1^q = 0$ and $a_0^qa_2 + a_1^{q+1} + a_0a_2^q = 0$. 
\item[$(2)_1(1)_1$] Unitary matrices of this type have the canonical form $\begin{pmatrix} a_0 & a_1 & \\ & a_0 &  \\ & & b_0 \end{pmatrix}$ where $a_0^{q+1} = b_0^{q+1} = 1$, $a_0 \neq b_0$, and $a_1$ is some fixed element of $\Fqq$ satisfying $a_0a_1^q + a_0^qa_1 = 0$.
\item[$(1)_1(1)_1(1)_1$] Unitary matrices with respect to the hermitian form $I_3$ of this type have the canonical form $\begin{pmatrix} a_0 &  & \\ & b_0 &  \\ & & c_0 \end{pmatrix}$ where $a_0^{q+1} = b_0^{q+1} = c_0^{q+1} = 1$ and all $a_0, b_0, c_0$ are distinct.
\item[$(1)_2(1)_1$] Unitary matrices with respect to the hermitian form $\gamma$ have the canonical form $\begin{pmatrix} a &  & \\ & a^{-q} &  \\ & & b_0 \end{pmatrix}$ where $a^{-q} \neq a$, and $b_0^{q+1} = 1$.
\item[$(1)_3$] A matrix of this type has a $U$-irreducible polynomial of degree $3$, as its minimal polynomial. 
\end{description}
Now we move on to the branching rules for each type. 
\begin{prop}\label{PropU3111}
For a matrix $A$ of the $\Cent$ type, the branching rules are as follows,
\begin{center}
\begin{tabular}{cc|cc}\hline
\text{Type} & \text{No. of branches} &\text{Type} & \text{No. of branches} \\ \hline
$(1,1,1)_1$ & $q+1$ & $(2)_1(1)_1$ & $(q+1)q$ \\
$(2,1)_1 $ & $q+1$ & $(1)_1(1)_1(1)_1$ & ${q+1}\choose 3$ \\
$(1,1)_1(1)_1$ & $(q+1)q$ & $(1)_2(1)_1$ & $(q^2-q-2)(q+1)/2$\\
$(3)_1$ & $q+1$ & $(1)_3$ & $(q^3-q)/3$. \\ \hline
\end{tabular}
\end{center}
\end{prop}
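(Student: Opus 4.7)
The plan is to observe that for any scalar matrix $A = aI_3$ with $a^{q+1}=1$, the centralizer $\Zu{3}(A)$ equals all of $U_3(\Fq)$, since scalar matrices commute with every matrix in $GL_3(\Fqq)$, and in particular with every element of $U_3(\Fq)$. By the definition of branches given in the introduction, the branches of the conjugacy class $C(A)$ are the conjugacy classes of $\Zu{3}(A)$ under its own conjugation action, which here are simply the conjugacy classes of $U_3(\Fq)$ itself. Hence the statement reduces to counting the conjugacy classes of $U_3(\Fq)$ of each similarity class type, using the canonical forms listed at the start of Section~\ref{S33}.

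For each of the types $(1,1,1)_1$, $(2,1)_1$, and $(3)_1$, the canonical form is determined by a single scalar $a_0 \in \Fqq$ satisfying $a_0^{q+1}=1$ (the auxiliary entries $a_1$, $a_2$ in the canonical forms are fixed once $a_0$ is chosen), giving $q+1$ classes each. The types $(1,1)_1(1)_1$ and $(2)_1(1)_1$ are each parameterized by an ordered pair $(a_0, b_0)$ of distinct solutions of $x^{q+1}=1$, ordered because the two eigenvalues are distinguishable by the sizes of their Jordan blocks; this yields $(q+1)q$ classes in each case. The type $(1)_1(1)_1(1)_1$ is given by an unordered triple of distinct solutions of $x^{q+1}=1$, contributing $\binom{q+1}{3}$ classes.

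For the remaining two types I would invoke the count of $U$-irreducible polynomials from Section~\ref{SecPrelim}. A class of type $(1)_2(1)_1$ corresponds to a choice of a $U$-irreducible quadratic together with an (independent) $U$-irreducible linear polynomial over $\Fqq$, so the count is $\hat{\phi}_2(q)\cdot(q+1) = \frac{(q^2-q-2)(q+1)}{2}$. A class of type $(1)_3$ corresponds to a single $U$-irreducible cubic, giving $\hat{\phi}_3(q) = \phi_3(q) = \frac{q^3-q}{3}$.

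No step in this plan constitutes a real obstacle; once the centralizer observation is in place, the rest is purely enumerative bookkeeping. The only point requiring mild care is to check that distinct parameter choices in the canonical forms correspond to distinct conjugacy classes (i.e.\ no overcounting across the eight types and no duplication within a single type), which is immediate from how the canonical forms were set up via the bijection between similarity classes and functions $\nu\colon U\Irr(\Fqq[t]) \to \Lambda$ recalled in Section~\ref{SecPrelim}.
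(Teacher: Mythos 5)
Your proposal is correct and follows the same route as the paper: the centralizer of a scalar matrix is all of $U_3(\Fq)$, so the branches are exactly the conjugacy classes of $U_3(\Fq)$, enumerated by type. The paper states this in one line and leaves the enumeration implicit; your detailed counts (ordered pairs for $(1,1)_1(1)_1$ and $(2)_1(1)_1$, unordered triples for $(1)_1(1)_1(1)_1$, and $\hat{\phi}_d(q)$ for the $U$-irreducible types) all agree with the table.
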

\begin{proof}
The centralizer of any $\Cent$ type of matrix is the entire group $\U{3}$. Hence, enumerating the similarity classes gives the table above.
\end{proof}

\begin{prop}\label{PropU311}
For a matrix of type $(1,1)_1(1)_1$, the branching rules are as follows,
\begin{center}
\begin{tabular}{cc} \hline
\text{Type of Branch} & \text{No. of Branches} \\ \hline
$(1,1)_1(1)_1$ & $(q+1)^2$ \\
$(2)_1(1)_1$ & $(q+1)^2$\\
$(1)_1(1)_1(1)_1$ & $q(q+1)^2/2$ \\
$(1)_2(1)_2$ & $(q+1)(q^2 - q - 2)/2$. \\ \hline
\end{tabular}
\end{center} 
\end{prop}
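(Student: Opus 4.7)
The plan is to compute the centralizer of a canonical representative, decompose it into a product of smaller unitary groups, and then identify the branch types by examining common centralizers case by case. Taking $A = \mathrm{diag}(a_0, a_0, a_1)$ with $a_0 \neq a_1$ and $a_0^{q+1} = a_1^{q+1} = 1$, any matrix commuting with $A$ must preserve its eigenspace decomposition $\Fqq^3 = V_{a_0} \oplus V_{a_1}$ and is therefore block-diagonal of sizes $2$ and $1$. With respect to the hermitian form $I_3$ used for this type in Section~\ref{S33}, the unitarity condition decouples across the blocks, so $\Zu{3}(A) \cong U_2(\Fq) \times U_1(\Fq)$.

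Since $U_1(\Fq)$ is abelian of order $q+1$, the conjugacy classes of $U_2(\Fq) \times U_1(\Fq)$ are precisely the products of conjugacy classes of $U_2(\Fq)$ with elements of $U_1(\Fq)$, giving $(q+1)^2 \cdot (q+1) = (q+1)^3$ branches in total (consistent with the column sum claimed in the proposition). For a representative pair $B = \mathrm{diag}(B_1, b_2)$ the common centralizer is $\Zu{3}(A,B) = Z_{U_2(\Fq)}(B_1) \times U_1(\Fq)$, and I would read off the $U_3(\Fq)$-type of the pair by matching this with a centralizer of a single-element type already listed in Section~\ref{S33}. The four cases are: (i) $B_1$ scalar, where $Z_{U_2(\Fq)}(B_1) = U_2(\Fq)$ and the common centralizer equals $\Zu{3}(A)$, giving type $(1,1)_1(1)_1$ and contributing $(q+1)\cdot(q+1) = (q+1)^2$ branches; (ii) $B_1$ of type $(2)_1$, where Proposition~\ref{PropU221} gives a commutative $Z_{U_2(\Fq)}(B_1)$ of order $q(q+1)$, whose product with $U_1(\Fq)$ matches the centralizer of a type $(2)_1(1)_1$ element in $U_3(\Fq)$, again contributing $(q+1)^2$ branches; (iii) $B_1$ split regular semisimple of type $(1)_1(1)_1$, where the common centralizer becomes a split torus $U_1(\Fq)^3$, identifying the pair with type $(1)_1(1)_1(1)_1$ and contributing $\binom{q+1}{2}(q+1) = q(q+1)^2/2$ branches; and (iv) $B_1$ anisotropic regular semisimple of type $(1)_2$, whose centralizer from Proposition~\ref{PropU12} together with $U_1(\Fq)$ matches the type $(1)_2(1)_1$ centralizer, contributing $\tfrac{(q+1)(q^2-q-2)}{2}$ branches.

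The main obstacle is the identification step in each case: verifying that $Z_{U_2(\Fq)}(B_1) \times U_1(\Fq)$, viewed as a subgroup of $U_3(\Fq)$, is conjugate in $GL_3(\Fqq)$ to the centralizer of the asserted $U_3$-type. This is a matter of aligning the canonical forms enumerated at the start of Section~\ref{S33} with the block structure forced by $\Zu{3}(A)$; once one checks that no additional coincidences or splittings occur and that no genuinely new type (outside the eight already listed) arises, the four counts follow mechanically by multiplying the $U_2$-class counts from Table~\ref{tableI} by $q+1$.
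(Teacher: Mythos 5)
Your proposal is correct and follows essentially the same route as the paper: identify $\Zu{3}(A)$ with $U_2(\Fq)\times U_1(\Fq)$, observe that its conjugacy classes are of type $\tau(1)_1$ for $\tau$ a type of $U_2(\Fq)$, and multiply the class counts of $U_2(\Fq)$ by $q+1$. (You even correct in passing the typo $(1)_2(1)_2$ in the statement, which should read $(1)_2(1)_1$ as in Table~\ref{tableII}.)
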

\begin{proof}
A matrix $A$ of type $(1,1)_1(1)_1$ has the canonical form $\begin{pmatrix} a_0 & & \\ & a_0 & \\ & & b_0 \end{pmatrix}$, where $a_0 \neq b_0$, and $a_0^{q+1} = b_0^{q+1} = 1$. The centralizer of a matrix of this type is $\Zu{3}(A) = \left\{ \begin{pmatrix} X & \\ & z_0\end{pmatrix} \mid X \in U_{2}(\Fq), z_0^{q+1} = 1\right\}$. Thus, each of the branches of $A$ is of the type $\tau (1)_1$, where $\tau$ is a similarity class type from $\U{2}$. Hence the number of branches is the product of the number of classes $\U{2}$ of type $\tau$, and $q+1$. Thus, we get the required table.
\end{proof}

\begin{prop}\label{PropU321}
For a matrix of type $(2,1)_1$, there are $q(q+1)$ branches of type $(2,1)_1$, $q^2(q+1)$ branches of type $(2)_1(1)_1$, and $q^2-1$ branches of type $(3)_1$. 
\end{prop}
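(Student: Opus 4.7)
The plan is to compute $\Zu{3}(A)$ explicitly for $A$ in the type-$(2,1)_1$ canonical form, partition its elements into three strata based on the eigenvalue pair $(x_0,x_4)$ and the vanishing of $x_2$, and in each stratum identify the common centralizer $\Zu{3}(A,X)$ of the pair $(A,X)$ up to $GL_3(\Fqq)$-conjugacy. The conjugacy class of $\Zu{3}(A,X)$ determines the branch type, and the orbit count then comes from orbit--stabilizer applied to the conjugation action of $\Zu{3}(A)$ on itself.

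Fixing $A=\left(\begin{smallmatrix}a_0&a_1&0\\0&a_0&0\\0&0&a_0\end{smallmatrix}\right)$ with respect to the hermitian form $\gamma$ and solving the joint commutation and unitary conditions, one obtains
\[
\Zu{3}(A) = \left\{ \begin{pmatrix} x_0 & x_1 & x_2 \\ 0 & x_0 & 0 \\ 0 & x_3 & x_4 \end{pmatrix} : \begin{array}{c} x_0^{q+1}=x_4^{q+1}=1,\ x_0 x_2^q + x_3 x_4^q=0,\\ x_0 x_1^q+x_1 x_0^q+x_3^{q+1}=0 \end{array}\right\},
\]
of order $q^3(q+1)^2$. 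Stratum I consists of the elements with $x_0=x_4$ and $x_2=x_3=0$; a direct matrix check shows that every such $X$ is central in $\Zu{3}(A)$, and the $q$ trace-zero solutions for $x_1$ over each of the $q+1$ choices of $x_0$ give $q(q+1)$ central elements. Each is a singleton orbit with $\Zu{3}(A,X)=\Zu{3}(A)$, hence these contribute the claimed $q(q+1)$ branches of type $(2,1)_1$.

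Stratum II consists of $x_0=x_4$ with $x_2\neq 0$; here $(X-x_0 I)^2 = -x_2^{q+1}E_{12}$ is nonzero, so $X$ is regular of Jordan type $(3)$. Since $E_{12}=-x_2^{-(q+1)}(X-x_0 I)^2$ lies in $\Fqq[X]$, the matrix $A=a_0 I+a_1 E_{12}$ also lies in $\Fqq[X]$; commutativity of $\Fqq[X]$ then forces $\Zu{3}(X)\subseteq \Zu{3}(A)$, so $\Zu{3}(A,X)=\Zu{3}(X)$, of known order $q^2(q+1)$ (the centralizer size of a type-$(3)_1$ element). The stratum contains $(q+1)(q^2-1)q=q(q-1)(q+1)^2$ elements, so dividing by the orbit size $q(q+1)$ gives $q^2-1$ orbits, all of type $(3)_1$.

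Stratum III is $x_0\neq x_4$, containing $q^4(q+1)$ elements. Solving $YX=XY$ for $Y\in\Zu{3}(A)$ forces $Y_{13}$ and $Y_{32}$ to be $x_2(Y_{33}-Y_{11})/(x_4-x_0)$ and $x_3(Y_{33}-Y_{11})/(x_4-x_0)$ respectively, leaving the three parameters $Y_{11}=Y_{22}$, $Y_{33}$, and $Y_{12}$ free. The $\Zu{3}(A)$-condition relating the $(1,3)$ and $(3,2)$ entries is automatically satisfied because of the corresponding condition $x_0 x_2^q+x_3 x_4^q=0$ on $X$, and the unitary count for $Y$ then comes out to $(q+1)^2\cdot q=q(q+1)^2$, matching the centralizer size of a type-$(2)_1(1)_1$ element. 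Orbits have size $q^2$, giving $q^4(q+1)/q^2=q^2(q+1)$ orbits of type $(2)_1(1)_1$. The main obstacle is this stratum: it contains both the diagonalizable $X$'s (Jordan type $(1,1)+(1)$) and the non-diagonalizable ones (Jordan type $(2)+(1)$), whose centralizers $\Zu{3}(X)$ have very different sizes, yet commutation with $A$ collapses the common centralizer $\Zu{3}(A,X)$ to the same three-parameter family in both subcases, and verifying that $|\Zu{3}(A,X)|=q(q+1)^2$ uniformly across the two is the delicate step.
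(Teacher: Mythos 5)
Your proof is correct and follows essentially the same route as the paper's: the same explicit description of $\Zu{3}(A)$ and the same three-way case split (equal eigenvalues with $x_3=0$, equal with $x_3\neq 0$, distinct eigenvalues), with the orbit counts obtained by orbit--stabilizer where the paper instead reduces representatives to canonical form via $XB=B'X$ before dividing by class sizes. The only blemish is the harmless constant in Stratum II, where $(X-x_0I)^2=x_2x_3E_{12}=-x_0^2x_2^{q+1}E_{12}$ rather than $-x_2^{q+1}E_{12}$; since all that matters is that this is a nonzero multiple of $E_{12}$, the argument stands.
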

\begin{proof}
A matrix of this type $(2,1)_1$ has the canonical form $A = \begin{pmatrix} a & b & \\ & a & \\ & & a \end{pmatrix}$, where $a^{q+1} = 1$, and $b$ is a fixed element of $\Fqq$ satisfying $ab^q + a^qb = 0$. Thus, $\Zu{3}(A)$ is 
$$ \Zu{3}(A) = \left\{ \begin{pmatrix}a_0 & a_1 & -a_0dc^q \\ & a_0 & \\ & c & d\end{pmatrix} \mid a_0^{q+1} = d^{q+1} = 1, a_0a_1^q + c^{q+1} + a_0^qa_1 = 0 \right\}.
$$
Let $B = \begin{pmatrix}a_0 & a_1 & -a_0dc^q \\ & a_0 & \\ & c & d\end{pmatrix}$ and  $B' = \begin{pmatrix}a'_0 & a'_1 & -a'_0d'c'^q \\ & a'_0 & \\ & c' & d'\end{pmatrix}$ be in $\Zu{3}(A)$. Suppose $B'$ is a conjugate of $B$ by some matrix $X= \begin{pmatrix}x_0 & x_1 & -y^qx_0z_0 \\ & x_0 & \\ & y & z_0\end{pmatrix} \in \Zu{3}(A)$, i.e., $XB = B'X$. Equating the entries in $XB = B'X$ gives us $a'_0 = a_0$ and $d' = d$ and
\begin{eqnarray}
ya_0 + z_0c &=& yd + x_0c'\label{E211}\\
x_0a_1 - cx_0z_0y^q &=& x_0a'_1 - ya_0dc' \label{E212} 
\end{eqnarray}
We deal with the two cases separately when $a_0 \neq d$ and $a_0 = d$. 

{\bf When $a_0 \neq d$: } In equation~\ref{E211}, we can choose $y \in \Fqq$ such that $x_0c' = 0$, thus making $c' = 0$. Now replace $c$ by $c'= 0$, and substitute in equation~\ref{E212}. We get $x_0a_1 = x_0a'_1$, which implies $a'_1 = a_1$. Hence, $B$ is reduced to $B = \begin{pmatrix}a_0 & a_1 & \\ & a_0 & \\ & & d\end{pmatrix}$, and 
$$
\Zu{3}(A,B)= \left\{ \begin{pmatrix}x_0 & x_1 &\\ & x_0 & \\& & z_0\end{pmatrix} \mid x_0^{q+1}= z_0^{q+1}= 1, x_0x_1^q + x_1x_0^q = 0  \right\},
$$
which is the centralizer group of a unitary matrix of type $(2)_1(1)_1$. Hence $(A,B)$ is a branch of type $(2)_1(1)_1$. The number of such branches is $(q+1)q^2$ (as $a_0$ is arbitrary, $d \neq a_0$, and for each $a_0$, there are $q$ $a_1$'s satisfying $a_0a_1^q + a_1a_0^q = 0$).

{\bf When $a_0 = d$:} In this case, instead of using the $XB = B'X$ approach, and reducing $B$ to a possible `canonical' form, we shall look at $XB = BX$. Here we have $B$ with $a_0 = d$. We have,
\begin{displaymath}
\begin{pmatrix}
x_0 & x_1 & -y^qx_0z_0\\
 & x_0 &  \\
 & y & z_0
\end{pmatrix} \begin{pmatrix}
a_0 & a_1 & -c^qa_0^2\\
 & a_0 &  \\
 & c & a_0
\end{pmatrix} = \begin{pmatrix}
a_0 & a_1 & -c^qa_0^2\\
 & a_0 &  \\
 & c & a_0
\end{pmatrix}
\begin{pmatrix}
x_0 & x_1 & -y^qx_0z_0\\
 & x_0 &  \\
 & y & z_0
\end{pmatrix}.
\end{displaymath}
\noindent This leaves us with 
\begin{eqnarray}
c(x_0 - z_0) &=& 0 \label{E213}\\
cy^qx_0z_0 &=& yc^qa_0^2 \label{E214}
\end{eqnarray}
Let us first assume $c =0$. We have $B = \begin{pmatrix} a_0 & a_1 &  \\  & a_0 &  \\  &  & a_0 \\ \end{pmatrix}$, where $a_0a_1^q + a_1a_0^q = 0$. There are $q(q+1)$ such matrices. We have $\Zu{3}(A,B) = \Zu{3}(A)$. The class size is $\frac{q^3(q+1)^2}{q^3(q+1)^2} = 1$. Hence, number of orbits is $q(q+1)/1 = q(q+1)$. We have $q(q+1)$ branches of type $(2,1)_1$.

In the case when $c\neq 0$, we get $B = \begin{pmatrix} a_0 & a_1 & -c^qa_0^2 \\  & a_0 &  \\  & c & a_0 \end{pmatrix}$, where $a_0a_1^q + a_1a_0^q + c^{q+1} = 0$. There are $(q+1)q(q^2-1)$ such $B$'s. Now, from Equation~\ref{E213}, we have $x_0 = z_0$, and thus, Equation~\ref{E214} becomes $cx_0^2y^q = c^qa_0^2y$. Thus, $y^q = c^{q-1}a_0^2y/x_0^2$. There are $q$ such $y \in \Fqq$ so that $y^q = c^{q-1}a_0^2y/x_0^2$. Hence, the centralizer of $A$ and $B$ is:
\begin{displaymath}
\Zu{3}(A,B) = \left\{ \begin{pmatrix}
 x_0 & x_1 & -c^{q-1}a_0^2y \\
  & x_0 &  \\
  & y & x_0
\end{pmatrix} \begin{matrix}\text{ $x_0^{q+1} = 1$, $y^q = c^{q-1}a_0^2y/x_0^2$}\\\text{ $x_0x_1^q + x_1x_0^q + y^{q+1} = 0$ }\end{matrix}\right\}
\end{displaymath}
Hence, $|\Zu{3}(A,B)| = (q+1)q^2$. The centralizer is a commutative subgroup of $U_3(\Fq)$, and is conjugate to that of a matrix of type $(3)_1$. The size of the conjugacy class of $B$ in $\Zu{3}(A)$ is $\frac{q^3(q+1)^2}{q^2(q+1)} = q(q+1)$. Hence, the number of orbits is $\frac{q(q^2-1)(q+1)}{q(q+1)} = q^2 - 1$. So we have $q^2 -1$ branches of type $(3)_1$.
\end{proof}
For a matrix of the remaining types, $(3)_1$, $(2)_1(1)_1$, $(1)_1(1)_1(1)_1$, $(1)_2(1)_1$, and $(1)_3$, the centralizer group of the matrix is the group of polynomials in that matrix with coefficients from $\Fqq$, such that they are in $U_{3}(\Fq)$. Thus, for any such matrix $A$, the $\Zu{3}(A)$ is a commutative subgroup of $U_{3}(\Fq)$. Hence, each orbit for the conjugation action of $\Zu{3}(A)$ on itself, is a singleton. Thus, each branch $(A,B)$ of $A$ is of the same type as $A$. We summarize this in the following, 
\begin{prop}\label{PropU3Reg}
For a matrix $A$ of any of the types, $(3)_1$, $(2)_1(1)_1$, $(1)_1(1)_1(1)_1$, $(1)_2(1)_1$, and $(1)_3$, table below gives us type of $A$, type of the branch of $A$, and number of branches.
\begin{center}
\begin{tabular}{ccc} \hline
\text{Type of $A$} &\text{Type of branch} & \text{No. of branches}\\ \hline
$(3)_1$ & $(3)_1$ &$q^2(q+1)$ \\
$(2)_1(1)_1$ & $(2)_1(1)_1$ & $(q+1)^2q$ \\
$(1)_1(1)_1(1)_1$ & $(1)_1(1)_1(1)_1$ & $(q+1)^3$ \\
$(1)_2(1)_1$ & $(1)_2(1)_1$ & $(q^2-1)(q+1)$ \\
$(1)_3$ & $(1)_3$ & $(q^3 + 1)$   \\ \hline
\end{tabular}
\end{center}
\end{prop}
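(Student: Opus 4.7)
The plan is to exploit a common structural feature of all five types: in each case the matrix $A$ is cyclic, i.e.\ its minimal polynomial coincides with its characteristic polynomial. A standard fact from linear algebra then gives $Z_{GL_3(\Fqq)}(A) = \Fqq[A]^\times$, which is commutative. Intersecting with the unitary group preserves commutativity, so $\Zu{3}(A)$ is abelian. Because the conjugation action of an abelian group on itself is trivial, every element $B \in \Zu{3}(A)$ is its own orbit, and the common centralizer $\Zu{3}(A,B)$ equals $\Zu{3}(A)$. This forces each pair $(A,B)$ to have the same similarity class type as $A$, and reduces the branching count to the single task of evaluating $|\Zu{3}(A)|$.

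What remains is to verify these five orders one type at a time, by direct enumeration against the canonical forms already tabulated at the start of Section~\ref{S33}. For $(3)_1$, the centralizer consists of upper triangular Toeplitz matrices with parameters $a_0, a_1, a_2$ subject to $a_0^{q+1}=1$, $a_0 a_1^q + a_0^q a_1 = 0$, and $a_0 a_2^q + a_1^{q+1} + a_0^q a_2 = 0$, yielding $(q+1)\cdot q\cdot q = q^2(q+1)$. For $(2)_1(1)_1$ the centralizer factors as the $(2)_1$-block centralizer (of order $q(q+1)$, computed exactly as in Proposition~\ref{PropU221}) times an independent scalar $z_0$ satisfying $z_0^{q+1}=1$, giving $q(q+1)^2$. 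For $(1)_1(1)_1(1)_1$ and $(1)_2(1)_1$, the centralizers are the diagonal maximal torus and a product of a rank-one anisotropic torus with a norm-one scalar, of orders $(q+1)^3$ and $(q^2-1)(q+1)$ respectively; both are immediate from the diagonal parametrizations.

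The case $(1)_3$ is the only one needing a bit more thought. Here $A$ has a $U$-irreducible degree-$3$ minimal polynomial, so $\Fqq[A]$ is a field isomorphic to $\mathbf{F}_{q^6}$. Under this identification, the adjoint involution coming from the hermitian form together with the Frobenius twist induces an involution on $\mathbf{F}_{q^6}^\times$ which, for a suitable choice of generator, is the map $x \mapsto x^{-q}$; the fixed subgroup under this involution is the anisotropic torus $\{x \in \mathbf{F}_{q^6}^\times \mid x^{q^3+1} = 1\}$ of order $q^3+1$. I would make this explicit by taking $A$ to be the companion matrix of its minimal polynomial, translating the unitary condition ${}^t\! g \beta \overline{g} = \beta$ into a relation on the coefficients of polynomials in $A$, and solving the resulting norm-one equation in $\mathbf{F}_{q^6}$.

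The principal obstacle is exactly this $(1)_3$ identification, since all other counts are routine verifications against the canonical forms. Once the five centralizer orders are in hand, the commutativity observation in the first paragraph closes the proof uniformly, with the type of every branch automatically equal to that of $A$.
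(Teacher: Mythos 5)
Your proposal is correct and follows essentially the same route as the paper: the paper likewise observes that for these five regular types the centralizer is the set of polynomials in $A$ lying in $U_3(\Fq)$, hence commutative, so every element is a singleton orbit and every branch has the type of $A$, with the count equal to $|\Zu{3}(A)|$. The paper simply reads the five centralizer orders off the known class data rather than re-deriving them, whereas you verify them explicitly (correctly, including the $\mathbf{F}_{q^6}$ norm-one torus of order $q^3+1$ for type $(1)_3$).
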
 
\subsection{Proof of Theorem~\ref{main1} in the case of $U_3(\Fq)$} \label{U3Main1}
We shall write down the types of similarity classes of $U_{3}(\Fq)$ in the following order:
$$\left\{(1,1,1)_1, (2,1)_1, (1,1)_1(1)_1, (3)_1, (2)_1(1)_1, (1)_1(1)_1(1)_1, (1)_2(1)_1, (1)_3\right\}.
$$
Using the order above, and the results of Propositions~\ref{PropU3111}, \ref{PropU311}, \ref{PropU321}, and \ref{PropU3Reg}, we have the branching rules for $U_3(\Fq)$ summarized in the Table~\ref{tableII} with rows and columns indexed by the eight conjugacy class types of $U_3$ in the order mentioned above. We write the entries of this table as a matrix, called branching matrix, $BU_3$.
\noindent This completes the proof of this theorem. 

\subsection{The number of simultaneous conjugacy classes $c_u(3,k,q)$ and $c_g(3,k,q)$}
Now that we have the branching matrix $BU_3$, we define $\bmf{1} = \begin{pmatrix}1&1&1&1&1&1&1&1\end{pmatrix}$  and  $\bmf{e}_1 $ is the column vector with $1$ at the first place and $0$ everywhere else. Therefore
\begin{displaymath}
c_u(3,k,q) = \bmf{1}\cdot BU_3^k \cdot \bmf{e}_1.
\end{displaymath}
We list $c_u(3,k,q)$ for some $k$ in Table~\ref{T3by3}.
\begin{table}[h!] 
\caption{\text{$c_u(3,k,q)$ for $k = 1,2,3,4$.}} \label{T3by3} 
\begin{equation*}\begin{array}{cc} \hline 
k & c_u(3,k,q) \\ \hline
1 & q^{3} + 2 q^{2} + 3 q + 2 \\
2 & q^{6} + 3 q^{5} + 8 q^{4} + 15 q^{3} + 15 q^{2} + 8 q + 2 \\
3 & q^{9} + 4 q^{8} + 14 q^{7} + 37 q^{6} + 66 q^{5} + 81 q^{4} + 64 q^{3} + 29 q^{2} + 7 q + 1 \\
4 & q^{12} + 5 q^{11} + 22 q^{10} + 74 q^{9} + 178 q^{8} + 313 q^{7} +  395q^{6} + 357 q^{5} +  241 q^{4} + 126 q^{3} + \\ & 49 q^{2} + 13 q + 2 \\ \hline
\end{array}\end{equation*}
\end{table} 
From the data in the this table, and further computations using Sage~\cite{SAGE} for $c_u(3,k,q)$ up to $k = 20$, we see that $c_u(3,k,q)$ is a polynomial in $q$ with non-negative integer coefficients. This proves Corollary~\ref{CorMain1}(2). We recall the branching matrix of $GL_3(\Fq)$ from~\cite{Sh1},
\begin{displaymath}
 B_{g_3} = \left(\begin{smallmatrix}
(1,1,1)_1 & (2,1)_1 & (1,1)_1(1)_1& (3)_1 & (2)_1(1)_1 & (1)_1(1)_1(1)_1 & (1)_2(1)_1& (1)_3\\ \hline 
&&&&&&& \\ 
q - 1 & 0 & 0 & 0 & 0 & 0 & 0 & 0 \\
q - 1 & q(q-1) & 0 & 0 & 0 & 0 & 0 & 0 \\
(q-1)(q-2) & 0 & (q-1)^2 & 0 & 0 & 0 & 0 & 0 \\
q - 1 & q^{2} - 1 & 0 & q^2(q-1) & 0 & 0 & 0 & 0 \\
(q-1)(q-2)& (q-1)(q-2)q & (q-1)^2 & 0 & q(q-1)^2 & 0 & 0 & 0 \\
{q-1 \choose 3} & 0 & (q-1){q-1 \choose 2} & 0 & 0 & (q-1)^3 & 0 & 0 \\
(q-1){q \choose 2}& 0 & (q-1){q \choose 2} & 0 & 0 & 0 & (q-1)^2(q+1) & 0 \\
\frac{(q^3-q)}{3} & 0 & 0 & 0 & 0 & 0 & 0 & q^{3} - 1
\end{smallmatrix}\right).
\end{displaymath}
The number of types for $GL_3(\Fq)$ and $U_3(\Fq)$ are same and given by the same partitions. With the help of the branching matrices $BU_3$ (for $U_3(\Fq)$), and $B_{g_3}$ (for $GL_3(\Fq)$) we can compute $c_u(3,k,q)$ and $c_g(3,k,q)$. However, we do not see any obivious interesting pattern like the duality pattern visible between $c_u(2,k,q)$, and $c_g(2,k,q)$.

\section{Branhcing Rules of $Sp_{2}(\Fq)$}\label{branchsp2} 

Here we will see the branching rules of $2 \times 2$ symplectic matrices over the finite field $\Fq$. We recall that $Sp_2(\Fq) = \{ A \in GL_2(\Fq) \mid A.\beta. {}^t\!A = \beta \}$ where $\beta = \begin{pmatrix}  & 1 \\ -1 & \end{pmatrix}$. A simple calculation shows that $Sp_2(\Fq)$ is the special linear group $SL_2(\Fq)$. Although the conjugacy classes are well known in this case, we list them in Appendix~\ref{appendixa}, with the class types being denoted in a way similar to the class type notations in~\cite{Sr}. 

\begin{prop}\label{Sp2Pcent}
For a matrix of type $C$ (the $\Cent$ type), the branching rules are as follows,  
 \begin{center}
\begin{tabular}{cc} \hline
    \text{ Type of Branch} & \text{No. of Branches} \\ \hline 
            $C$ & $2$ \\
           $ A_1$ & $2$ \\
           $ A_2$ & $2$ \\
           $ D$ & $\frac{q-3}{2}$ \\
            $Ir$ & $\frac{q-1}{2}$. \\ \hline
\end{tabular}
\end{center}
\end{prop}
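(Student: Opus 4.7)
The plan is to exploit the fact that an element of the central type has centralizer equal to the whole group, so that its branches coincide with the conjugacy classes of $Sp_2(\Fq)$ themselves, sorted by similarity class type. Then the proposition reduces to the well-known enumeration of conjugacy classes of $SL_2(\Fq)\cong Sp_2(\Fq)$ together with the identification of their types.

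First I would observe that a matrix $A$ of type $C$ is a scalar $aI_2$ with $\det(A)=a^2=1$, so $a=\pm 1$. Its centralizer in $Sp_2(\Fq)$ is the entire group $Sp_2(\Fq)$. Consequently the orbits for the conjugation action of $Z_{Sp_2(\Fq)}(A)$ on itself are just the conjugacy classes of $Sp_2(\Fq)$. So the branching count for type $C$ is obtained by going through the list of conjugacy classes of $SL_2(\Fq)$ (compiled in Appendix~\ref{appendixa}) and reading off how many there are of each type.

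Next I would enumerate, type by type. The central classes are $\{I\}$ and $\{-I\}$, giving $2$ branches of type $C$. The regular unipotent class of $GL_2(\Fq)$ splits in $SL_2(\Fq)$ into two classes, distinguished by whether the off-diagonal entry is a square in $\Fq^\times$ or not (this is the standard splitting argument: the $GL_2$-centralizer of $\begin{pmatrix}1&1\\0&1\end{pmatrix}$ does not lie entirely in $SL_2$, and its image under the determinant map is all of $\Fq^\times$, whereas the $SL_2$-conjugacy class is a single $\Fq^{\times 2}$-orbit). This gives $2$ branches of type $A_1$, and the analogous argument applied to $-I$ times these gives $2$ branches of type $A_2$. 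For type $D$ the classes are represented by $\operatorname{diag}(a,a^{-1})$ with $a\in\Fq^\times\setminus\{\pm 1\}$, modulo the identification $a\leftrightarrow a^{-1}$; this yields $(q-3)/2$ branches. Finally, the classes of type $Ir$ correspond to elements of the non-split torus $T$ of order $q+1$, modulo the Weyl-group action $x\mapsto x^{-1}$; excluding the two fixed points $\pm 1$ leaves $(q+1-2)/2=(q-1)/2$ branches.

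The main step that requires care (rather than routine counting) is the splitting of the unipotent classes into the types $A_1$ and $A_2$, since this is where $Sp_2(\Fq)=SL_2(\Fq)$ differs from $GL_2(\Fq)$. Once this is handled via the square/non-square dichotomy, the remaining tallies are straightforward. Summing, the four nonzero entries in the first column of Table~\ref{tableIII} together with the $2$ branches of type $C$ account for all $q+4$ conjugacy classes of $SL_2(\Fq)$, which serves as a consistency check.
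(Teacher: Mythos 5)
Your proposal is correct and is essentially the paper's own argument: since a central element's centralizer is all of $Sp_2(\Fq)$, its branches are exactly the conjugacy classes of $SL_2(\Fq)$, which the paper simply reads off from the class list in Appendix~\ref{appendixa}. The only (immaterial) discrepancy is your labelling of the four split non-semisimple classes --- in the paper's convention $A_1$ and $A_2$ are distinguished by whether the off-diagonal entry is a square or a non-square, each type containing one class with eigenvalue $+1$ and one with eigenvalue $-1$, rather than by the sign of the eigenvalue --- but either grouping yields the counts $2$ and $2$, so the stated table is unaffected.
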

\begin{proof}
 As the matrix is central, its centralizer is the whole of $Sp_2(\Fq)$, thus the result follows.
\end{proof}

\noindent The other four types $A_1$, $A_2$, $D$, and $C$ are regular types, hence for a matrix $A$ of any of these four types, its centralizer $Z_{Sp_2(\Fq)}(A)$ is polynomial in $A$ and hence it is a commutative group. Thus the commuting pair $(A,B)$ is of the same type as $A$. Thus, from the above argument, and the centralizer data listed in the same table, we have
\begin{prop}\label{Sp2Preg}
The table below shows the type, its only branch type, and the number of branches, for the regular types.
\begin{center}
\begin{tabular}{ccc} \hline
 \text{Type} & \text{Branch Type} & \text{No. of Branches} \\ \hline
 $A_1$ & $A_1$ & $2q$ \\
$A_2$ & $A_2$ & $2q$ \\ 
$ D$ &$ D$ & $q-1$ \\
$ Ir$ & $Ir$ & $q + 1$. \\ \hline
\end{tabular}
\end{center}
\end{prop}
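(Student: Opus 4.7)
The plan is to exploit the fact, already flagged in the paragraph preceding the proposition, that for each of the four types $A_1, A_2, D, Ir$ the centralizer in $Sp_2(\Fq)$ is abelian. Once this is in hand, the proof becomes a pure bookkeeping exercise: for any matrix $A$ of a regular type and any $B\in Z_{Sp_2(\Fq)}(A)$, the commutativity of $Z_{Sp_2(\Fq)}(A)$ forces each of its elements to be a singleton orbit under its own conjugation action, and it forces the common centralizer $Z_{Sp_2(\Fq)}(A,B)$ to equal $Z_{Sp_2(\Fq)}(A)$. Hence every branch of $A$ is of the same similarity class type as $A$, and the number of branches is exactly $|Z_{Sp_2(\Fq)}(A)|$.

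First I would read off from Appendix~\ref{appendixa} the canonical representatives for the four regular types. The types $A_1$ and $A_2$ correspond to the two non-central unipotent classes in $Sp_2(\Fq)\cong SL_2(\Fq)$, whose canonical representatives are $\pm\begin{pmatrix}1 & a\\ 0 & 1\end{pmatrix}$ for a fixed square (resp.\ non-square) $a\in\Fq^\times$; the type $D$ has representative $\begin{pmatrix}a & 0\\ 0 & a^{-1}\end{pmatrix}$ with $a\neq \pm 1$; and the type $Ir$ has representative with eigenvalues in $\Fqq\setminus\Fq$ of norm $1$.

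Next I would verify commutativity and compute the orders case by case. For $A_1$ and $A_2$, a direct matrix computation (or the standard fact that the centralizer of a non-central unipotent in $SL_2$ is $\{\pm I\}\cdot U$, where $U$ is the unipotent radical of the Borel fixing its eigenvector) gives an abelian group of order $2q$. For $D$, the centralizer is the split maximal torus of diagonal matrices with determinant~$1$, abelian of order $q-1$. For $Ir$, the centralizer is the anisotropic maximal torus, isomorphic to the norm-one subgroup of $\Fqq^\times$, abelian of order $q+1$. These orders match exactly the branch counts claimed in the table.

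The only possible obstacle is to be sure the "type" label is preserved by the pair $(A,B)$ with $B\in Z_{Sp_2(\Fq)}(A)$. This is handled by the alternative characterization of type via the common centralizer: since $Z_{Sp_2(\Fq)}(A)$ is abelian, any $B$ in it satisfies $Z_{Sp_2(\Fq)}(A)\subseteq Z_{Sp_2(\Fq)}(B)$, so intersecting gives $Z_{Sp_2(\Fq)}(A,B) = Z_{Sp_2(\Fq)}(A)$, and the pair inherits the class type of $A$. Assembling the four cases yields exactly the table stated in Proposition~\ref{Sp2Preg}.
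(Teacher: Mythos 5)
Your proposal is correct and follows essentially the same route as the paper: the paper likewise observes that for each regular type the centralizer $Z_{Sp_2(\Fq)}(A)$ is commutative (there, because it is the group of polynomials in $A$), so every element is a singleton orbit of the same type as $A$, and the branch count is the centralizer order read off from the conjugacy class table ($2q$, $2q$, $q-1$, $q+1$). Your explicit case-by-case verification of the centralizer orders and the remark that $Z_{Sp_2(\Fq)}(A,B)=Z_{Sp_2(\Fq)}(A)$ pins down the type are consistent with, and slightly more detailed than, the paper's argument.
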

\subsection{Proof of Theorem~\ref{Main2} for $Sp_2(\Fq)$}\label{SecSp2} 
From Propositions~\ref{Sp2Pcent}~and~\ref{Sp2Preg}, we have the branching rules for $Sp_2(\Fq)$ summarized in the Table~\ref{tableIII} which we call the matrix $BSp_2$, indexed by the rows and columns in the order $ \{C,A_1,A_2,D,Ir\}$.
The proof of remaining part of the theorem is in the next section.

\subsection{Proof of Corollary~\ref{CorMain1}(2)} Denoting the number of simultaneous conjugacy classes of $k$-tuples of commuting matrices in $Sp_2(\Fq)$ by $c_s(2,k,q)$, we have
\begin{displaymath}
 c_s(2,k,q) = \begin{pmatrix} 1&1&1&1&1\end{pmatrix}.BSp_2. e_1
\end{displaymath}
where $e_1$ is the column vector with $1$ at the first place and $0$ elsewhere.
We list some values of $c_s(2,k,q)$.
\begin{center}
\begin{tabular}{cc}\hline
 $k$ & $c_s(2,k,q)$ \\ \hline
$1$ &$ q+ 4$ \\
$2$ & $q^2 +  8q + 9$ \\
$ 3$ & $q^3 + 16q^2 + 19q + 16$  \\
$ 4$ & $q^{4} + 32 q^{3} + 38 q^{2} + 32 q + 33$ \\ \hline
\end{tabular}
\end{center}
We calculated $c_s(2,k,q)$ for $k$ up to $20$ using Sage~\cite{SAGE}, and observed that for each of those $k$, the value of $c_s(2,k,q)$ is a polynomial in $q$ with non-negative integer coefficients. This proves Corollary~\ref{CorMain1}(2).

\section{Branching Rules for $Sp_4(\Fq)$} 

We now move to $Sp_4(\Fq)$. We know that $Sp_4(\Fq)$ is $\{A \in GL_4(\Fq) \mid A.{\beta}. {}^t\!A = \beta \}$ where $\beta$ is a non-degenerate skew-symmetric matrix. We use one of the following depending on the ease of computation,
\begin{displaymath}
\beta_1 = \begin{pmatrix}  & 1 &  &  \\ -1 &  &  &  \\  &  &  & 1 \\  &  & -1 &  \end{pmatrix}, \text{ or }         
\beta_2 = \begin{pmatrix}  &  &  & 1 \\  &  & 1 &  \\  & -1 &  &  \\ -1 &  &  & \end{pmatrix} \text{, or }
\beta_3 = \begin{pmatrix}  & I_2\\  -I_2 &  \end{pmatrix}.
\end{displaymath}
This group is of size $q^4(q^4-1)(q^2-1)$. The conjugacy classes of $Sp_4(\Fq)$ are neatly described in Srinivasan's paper~\cite{Sr} (see pages 489-491) with respect to the form $\beta_1$. We reproduce the same in Appendix~\ref{appendixa} in Table~\ref{TSp4Con} for two reasons (a) for the convenience of reader, and, (b) a slight change in the nomenclature of the conjugacy class types, but based very much on Srinivasan's nomenclature. 

\begin{prop}\label{PA1}
For a matrix of type $A_1$, the branching rules are the first two columns of the Table~\ref{TSp4Con}.
\end{prop}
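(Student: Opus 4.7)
The plan is to exploit the fact that $A_1$ is the $\Cent$ type of $Sp_4(\Fq)$, exactly as was done for the $\Cent$ types in $U_2(\Fq)$, $U_3(\Fq)$, and $Sp_2(\Fq)$ (see Propositions~\ref{PropU211}, \ref{PropU3111}, and~\ref{Sp2Pcent}). First, I would verify from the canonical forms listed in Table~\ref{TSp4Con} in Appendix~\ref{appendixa} that a matrix $A$ of type $A_1$ is a central element of $Sp_4(\Fq)$ (concretely, $A=\pm I_4$, which accounts for the entry ``$2$'' in the $A_1$-diagonal of the branching matrix). Since $A$ is central, $Z_{Sp_4(\Fq)}(A)=Sp_4(\Fq)$.

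Consequently, the branches of $A$ are precisely the conjugacy classes of $Sp_4(\Fq)$ itself, so determining the branching rule reduces to enumerating the conjugacy classes of $Sp_4(\Fq)$ grouped by similarity class type. This enumeration has already been carried out, following Srinivasan~\cite{Sr}, and the data is recorded in the first two columns of Table~\ref{TSp4Con}: one column labels the twenty-four types and the other gives the number of classes of that type. Reading the $A_1$ column of the branching matrix in Table~\ref{tableIV} entry by entry and matching it against the second column of Table~\ref{TSp4Con} completes the verification.

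There is no real obstacle to this proof beyond ensuring that the class-counting in Table~\ref{TSp4Con} agrees with the entries of the first column of Table~\ref{tableIV}; this is a direct bookkeeping check. The only subtle point to highlight is that, because three genuinely new types ($N_1$, $N_2$, $N_3$) arise later from further centralizer splitting within $Sp_4(\Fq)$ but do not occur as conjugacy class types of $Sp_4(\Fq)$ itself, the corresponding entries in the $A_1$ column of Table~\ref{tableIV} are $0$. All other entries are read off directly from Srinivasan's classification.
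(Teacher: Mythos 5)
Your proof is correct and is essentially the paper's argument: the paper's entire proof is ``Since these matrices are central, the result follows,'' and you have simply spelled out the same reasoning (central element, centralizer is all of $Sp_4(\Fq)$, branches are the conjugacy classes enumerated in Table~\ref{TSp4Con}, with zeros for the new types $N_1,N_2,N_3$).
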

\begin{proof}
Since these matrices are central, the result follows.
\end{proof}
\begin{prop}\label{new1}
 For a matrix in $Sp_4(\Fq)$ of type $A_2$, there are:
 \begin{itemize}
  \item $2q$ branches of type $A_2$.
  \item $4(q-1)$ branches of type $A_4$.
  \item $2q$ branches of type $D_2$.
  \item $4q$ branches of type $D_3$.
  \item $q(q-3)$ branches of type $C_4$.
  \item $q(q-1)$ branches of type $C_2$.
  \item $4q$ branches of a new type, which we shall call $N_1$. 
  The common centralizer of a pair of this type is $\left\{ \begin{pmatrix}x_0 &  & y_2 & x_1 \\  & z_0 &z_1 & y_2z_0x_0 \\  &  & z_0 &  \\  &  &  & x_0\end{pmatrix} \mid x_0^2 = z_0^2 = 1 \right\}$. 
  This centralizer is of size $4q^3$, which none of the already known centralizers have.
  \item $4$ branches of a new type, which we shall call $N_2$. A pair of this type has the common centralizer $   \left\{ \begin{pmatrix}x_0 & y_1 &  & x_1 \\  & x_0 & &  \\  & z_2 & x_0 & -y_1 \\  &  &  & x_0\end{pmatrix}\mid x_0^2 =  1  \right\}$,  which is of size $2q^3$, which none of the already known centralizers have.
  \end{itemize}
\end{prop}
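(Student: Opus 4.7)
The plan is to follow the pattern of the proof of Proposition~\ref{PropU321} (the analogous case of type $(2,1)_1$ in $U_3(\Fq)$). First, I would take the canonical representative $A$ of type $A_2$ from Table~\ref{TSp4Con} in Appendix~\ref{appendixa}, working with whichever of the forms $\beta_1,\beta_2,\beta_3$ makes the calculation cleanest, and compute $Z_{Sp_4(\Fq)}(A)$ explicitly as a parametrized family of matrices cut out by the symplectic condition $X\beta\,{}^t\!X = \beta$. This produces a concrete coordinate description on which the conjugation action can be analysed directly.

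With $Z_{Sp_4(\Fq)}(A)$ in hand, the branching is obtained by classifying the conjugacy orbits of $Z_{Sp_4(\Fq)}(A)$ acting on itself. For a generic $B \in Z_{Sp_4(\Fq)}(A)$, I would set up the equation $XB = B'X$ for $X \in Z_{Sp_4(\Fq)}(A)$ and use it to reduce $B$ to a normal form along its orbit; the analysis splits naturally into cases according to which of the free parameters of $B$ vanish. In each case, the common centralizer $Z_{Sp_4(\Fq)}(A,B)$ is read off by solving $XB = BX$, and the branch type is identified by comparing this centralizer, up to $GL_4(\Fq)$-conjugacy, with the centralizers attached to the $21$ Srinivasan types in Appendix~\ref{appendixa}. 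When no match is found, a new similarity class type has been discovered: as the statement predicts, exactly two new centralizers appear here, of orders $4q^3$ (yielding $N_1$) and $2q^3$ (yielding $N_2$), and these orders alone already distinguish them from every existing centralizer in Table~\ref{tableIV}. Finally, the branch count of each type follows from the orbit-stabilizer identity: (number of branches of type $\tau$) equals (number of $B$'s with the given normal form) multiplied by $|Z_{Sp_4(\Fq)}(A,B_\tau)|/|Z_{Sp_4(\Fq)}(A)|$.

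The principal obstacle, compared with Proposition~\ref{PropU321}, is the combinatorial explosion of subcases: the $4\times 4$ centralizer carries several more free parameters than its $U_3$ counterpart, so the bookkeeping of which parameter configurations reduce to which normal form is substantially heavier. The most delicate point is identifying exactly the right pair of new centralizer shapes $N_1$ and $N_2$ and confirming that they are not $GL_4(\Fq)$-conjugate to any centralizer of an existing type (nor to each other); comparison of orders handles most potential collisions, but the borderline cases require a closer look at the group structure, such as the abelianity of the centralizer and the Jordan shapes of its generators. As a numerical consistency check, the eight branch counts asserted in the statement sum to $2q + 4(q-1) + 2q + 4q + q(q-3) + q(q-1) + 4q + 4 = 2q^2 + 12q$, and this sum must equal the total number of conjugacy classes of $Z_{Sp_4(\Fq)}(A)$; any discrepancy would flag a missed or duplicated case in the parameter analysis.
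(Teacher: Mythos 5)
Your proposal follows essentially the same route as the paper: compute $Z_{Sp_4(\Fq)}(A)$ explicitly for the canonical $A_2$ representative (the paper uses the form $\beta_2$), reduce $B$ along its orbit via $XB=B'X$, identify each branch by matching its common centralizer against the known types, and count orbits by orbit--stabilizer; the paper's one organizing device beyond this is the observation that $XB=B'X$ forces the middle $2\times 2$ block of $B$ to transform by $SL_2(\Fq)$-conjugation, so the case analysis is indexed by conjugacy classes of $SL_2(\Fq)$ and their centralizers rather than by which parameters of $B$ vanish. Your consistency check that the asserted branch counts sum to $2q^2+12q$, which must equal the class number of $Z_{Sp_4(\Fq)}(A)$, is a sensible addition not made explicit in the paper.
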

\begin{proof} A matrix of type $A_2$ has the canonical form $A=\begin{pmatrix} a_0 &  &  & \lambda \\  & a_0 &  & \\  &  & a_0 & \\ &&& a_0\end{pmatrix}$, where $a_0 = \pm1$ and $\lambda= 1$ or $\gamma$ (where $\langle \gamma \rangle = \Fq^*$, hence $\gamma$ is a non-square), with respect to the form $\beta_2$. To prove this proposition, it is enough to prove it, when $\lambda = 1$. 
The centralizer of $A$ is
$$
Z_{Sp_4(\Fq)}(A) = \left\{\begin{pmatrix}a_0 & b_1 & b_2 &a_1\\  & c_0 & c_1 & a_0(b_2c_0-b_1c_1)\\ & c_2&c_3& a_0(b_2c_2 - b_1c_3)\\  &  &  & a_0\end{pmatrix} \mid a_0 = \pm 1,  \left(\begin{smallmatrix}c_0 & c_1 \\c_2 & c_3\end{smallmatrix}\right) \in SL_2(\Fq)\right\}.
$$
We let $B = \begin{pmatrix}a_0 & b_1 & b_2 &a_1\\  & c_0 & c_1 & a_0(b_2c_0-b_1c_1)\\ & c_2&c_3& a_0(b_2c_2 - b_1c_3)\\  &  &  & a_0\end{pmatrix}$, $X = \begin{pmatrix}x_0 & y_1 & y_2 &x_1\\  & z_0 & z_1 & x_0(y_2z_0-y_1z_1)\\ & z_2&z_3& x_0(y_2z_2 - y_1z_3)\\  &  & & x_0\end{pmatrix}$, and $B' = \begin{pmatrix}a'_0 & b'_1 & b'_2 &a'_1\\  & c'_0 & c'_1 & a'_0(b'_2c'_0-b'_1c'_1)\\ & c'_2&c'_3& a'_0(b'_2c'_2 - b'_1c'_3)\\  &  &  & a'_0\end{pmatrix}$.
Like in the proofs of any of the propositions in~\cite{Sh1}, we work on $XB = B'X$, to reduce $B$ to the simplest forms possible, and find the common centralizer of the branch $(A,B)$ of $A$, accordingly, and state the type and number of branches. In this case $XB = B'X$ leads us to $a'_0 =a_0$, and $\left(\begin{smallmatrix}z_0 & z_1 \\z_2 & z_3\end{smallmatrix}\right)\left(\begin{smallmatrix}c_0 & c_1 \\c_2 & c_3\end{smallmatrix}\right) = \left(\begin{smallmatrix}z_0 & z_1 \\z_2 & z_3\end{smallmatrix}\right)\left(\begin{smallmatrix}c'_0 & c'_1 \\c'_2 & c'_3\end{smallmatrix}\right)$.  Thus, we can take $\left(\begin{smallmatrix}c_0 & c_1 \\c_2 & c_3\end{smallmatrix}\right)$, to be a representative of a conjugacy class of $SL_2(\Fq)$, and $\left(\begin{smallmatrix}z_0 & z_1 \\z_2 & z_3\end{smallmatrix}\right)$ to be its centralizer matrix.  This leads us to the following set of equations:
\begin{eqnarray*}
 x_0(b_1, b_2) + (y_1, y_2) \left(\begin{smallmatrix}c_0 & c_1 \\c_2 & c_3\end{smallmatrix}\right) &=& (b_1, b_2)\left(\begin{smallmatrix}z_0 & z_1 \\z_2 & z_3\end{smallmatrix}\right) + a_0(y_1,  y_2).\\
 x_0a_1 + (y_1, y_2)\begin{pmatrix} a_0(b_2c_0 - b_1c_1)\\ a_0(b_2c_2  - b_1c_3)\end{pmatrix} &=& x_0a'_1 +(b'_1, b'_2)\begin{pmatrix}x_0(y_2z_0-y_1z_1)\\x_0(y_2z_2 - y_1z_3)\end{pmatrix}. 
\end{eqnarray*}
We use these to simplify $B$ to get the branches listed in the statement.
\end{proof}
\begin{prop}\label{new2}
For a matrix of type $A_3$ the branches are as follows:
\begin{itemize}
 \item $2q$ branches of type $A_3$.
 \item $\displaystyle\frac{(q-3)q}{2}$ branches of type $B_9$.
 \item $2q^2$ branches of type $D_3$.
 \item $q(q+3)$ branches of a new type, which we will call new type $N_3$, 
 with the following common centralizer $\left\{ \begin{pmatrix}x_0 &  & x_1 &y_1 \\  & x_0 &y_1 & y_2 \\  &  & x_0 &  \\  &  &  & x_0\end{pmatrix} \mid x_0^2 = 1\right\}$. 
 \item $2q(q-1)$ branches of the new type, $N_1$.
\end{itemize}
\end{prop}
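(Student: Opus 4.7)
The plan is to adapt the strategy of Proposition~\ref{new1} to the type $A_3$. First I would pull the canonical form $A$ of a type $A_3$ matrix, and its centralizer $Z_{Sp_4(\Fq)}(A)$, from Table~\ref{TSp4Con} (using whichever of the forms $\beta_1, \beta_2, \beta_3$ makes the centralizer easiest to write down explicitly; the shape of the proposed new centralizer $N_3$ suggests choosing a form in which $A$ is block upper‑triangular with equal diagonal scalars $a_0 = \pm 1$). I would then parametrize a generic element $B \in Z_{Sp_4(\Fq)}(A)$ by its free entries, and likewise a generic conjugating element $X \in Z_{Sp_4(\Fq)}(A)$.

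Next I would solve the conjugacy equation $XB = B'X$ inside $Z_{Sp_4(\Fq)}(A)$, just as in the proof of Proposition~\ref{new1}. The free parameters in $X$ are used to eliminate off-diagonal entries of $B$, producing a short list of canonical representatives for $B$ indexed by which of the surviving entries vanish (and, where relevant, by whether a residual quadratic expression is a square in $\Fq$). The five cases in the statement correspond to the five possible outcomes of this reduction: the diagonal/scalar case (giving $2q$ branches of type $A_3$), a case in which a single Jordan-like block appears (giving $2q^2$ branches of type $D_3$), a case producing a torus-type piece parametrized by a non-square class (giving $\tfrac{(q-3)q}{2}$ branches of type $B_9$), and two degenerate cases whose stabilizers match the group written for $N_3$ and the group written for $N_1$ in Proposition~\ref{new1}, respectively.

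For each canonical $B$ I would then compute $Z_{Sp_4(\Fq)}(A,B) = Z_{Sp_4(\Fq)}(A) \cap Z_{Sp_4(\Fq)}(B)$ by solving $YB = BY$ for $Y \in Z_{Sp_4(\Fq)}(A)$. Comparison of this group with the centralizers already recorded in Table~\ref{TSp4Con} determines the branch type: matches identify $A_3, B_9, D_3$ or the new type $N_1$ from Proposition~\ref{new1}, while non-matches (only one such case should arise, of size $2q^3$ and with the upper-triangular symplectic shape displayed in the statement) define the new type $N_3$. The count in each case is obtained from orbit–stabilizer: the number of $Z_{Sp_4(\Fq)}(A)$-orbits with a given representative is the number of $B$'s of that form divided by $|Z_{Sp_4(\Fq)}(A)| / |Z_{Sp_4(\Fq)}(A,B)|$.

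The main obstacle will be the case analysis. Several of the subcases produce centralizers that look superficially alike (in particular $N_1$ versus $N_3$, both of which are unipotent-by-$\{\pm 1\}$ groups of order a small power of $q$ times $4$), so I would have to check carefully that the explicit matrix shape matches the recorded centralizer, not just the order. A secondary consistency check is that the five branch counts $2q,\ \tfrac{(q-3)q}{2},\ 2q^2,\ q(q+3),\ 2q(q-1)$ should sum to the number of conjugacy classes of $Z_{Sp_4(\Fq)}(A)$ (which is what a branching column of the table must add up to for the group $Z_{Sp_4(\Fq)}(A)$); verifying this sum against an independent enumeration of conjugacy classes of $Z_{Sp_4(\Fq)}(A)$ is the final sanity check on the case split.
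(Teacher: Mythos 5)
Your overall methodology---compute $Z_{Sp_4(\Fq)}(A)$, solve $XB=B'X$ inside it, classify the resulting canonical $B$'s, and count by orbit--stabilizer---is the same as the paper's, and your closing consistency check (the five counts must sum to the number of conjugacy classes of $Z_{Sp_4(\Fq)}(A)$) is a sound one. However, the substantive content of the proof is a structural identification that your plan never reaches. With respect to $\beta_3$ the centralizer of the canonical $A_3$ matrix is
$$Z_{Sp_4(\Fq)}(A) = \left\{ \begin{pmatrix} C & D\,{}^tC^{-1} \\ & \Delta C \Delta \end{pmatrix} \ \Big| \ D={}^tD,\ C\Delta\,{}^tC=\Delta \right\}, \qquad \Delta=\begin{pmatrix}1&\\&-1\end{pmatrix},$$
so the block $C$ ranges over the split orthogonal group $O_2^+(\Fq)$, and the equation $XB=B'X$ forces $ZC=C'Z$ before anything else. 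The entire case analysis is therefore organized by the conjugacy classes of $O_2^+(\Fq)$ (this is why Appendix~\ref{appendixb} exists): the two central classes $\pm I_2$, the $(q-3)/2$ split-torus classes $\mathrm{diag}(\gamma^i,\gamma^{-i})$, and the two reflection classes. This reduction is also precisely what separates $A_3$ from $A_3'$, where $C$ instead lies in $O_2^-(\Fq)$ and one gets $B_7$ branches in place of $B_9$.

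Your generic ``use the free parameters of $X$ to kill off-diagonal entries of $B$'' plan does not detect this structure, and your guesses at which subcase yields which branch type are off: the $B_9$ branches arise from the split-torus classes of $O_2^+(\Fq)$ (regular split semisimple elements, not ``a torus-type piece parametrized by a non-square class''), and the $D_3$ branches arise from the reflection classes (eigenvalues $+1$ and $-1$ on the $C$-block), not from ``a single Jordan-like block.'' Since the five-way case split is the whole proposition and your proposal asserts rather than derives it---and misattributes at least two of the cases---there is a genuine gap at the central step.
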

\begin{proof}
A matrix $A$ of type $A_3$ has the canonical form $\begin{pmatrix}
  a_0 &  & 1 &  \\
  & a_0 &  & -1 \\
   &  & a_0 & \\
   &  &  & a_0
  \end{pmatrix}$, where $a_0 = \pm1$, with respect to the bilinear form $\beta_3$. The centralizer of $A$ is  
  $$
  Z_{Sp_4(\Fq)}(A) = \displaystyle\left\{ \begin{pmatrix} C & D.^tC^{-1}\\  & \Delta C\Delta  \end{pmatrix} \mid D=~^tD, C.\Delta.^tC = \Delta, D\in M_2(\Fq) \right\}$$
where $\Delta= \begin{pmatrix}1 &  \\ &-1\end{pmatrix} =\Delta^{-1}$.   
So, the $C$ described above is a matrix in the orthogonal group $O_2^+(\Fq)$. We see $C$ satisfying: $^tC^{-1} = \Delta C\Delta $.

 Let $B = \begin{pmatrix} C & D.{}^t\!C^{-1}\\ & \Delta  C\Delta  \end{pmatrix}$, and $X = \begin{pmatrix} Z & Y.{}^t\!Z^{-1}\\ & \Delta Z\Delta  \end{pmatrix}$, and let $B' = \begin{pmatrix} C' & D'.^t\!C'^{-1}\\ & \Delta C'\Delta  \end{pmatrix}$. We work on $XB = B'X$, to reduce $B$ to the canonical forms of the branches of $A$. Here, $XB = B'X$ leads us firstly to $ZC  = C'Z$. So we might as well take $C$ to be a representative of a conjugacy class of $O_2^+(\Fq)$, and $Z$ to be a matrix in its centralizer in $O_2^+(\Fq)$. With this, and $XB = B'X$, we have the equation $Z.D\Delta C\Delta  + Y\Delta Z\Delta . \Delta C\Delta  = C.Y\Delta Z\Delta  + D'\Delta C\Delta . \Delta Z\Delta$ which further simplifies to $ Z.D\Delta C\Delta  + Y\Delta ZC\Delta = C.Y\Delta Z\Delta  + D'\Delta CZ\Delta$. Using the various conjugacy classes of $O_2^+(\Fq)$ (mentioned in Appendix~\ref{appendixb}), we get the branches as mentioned in the statement of this proposition.
\end{proof}
\begin{prop}\label{new3}
For a matrix of type $A_3'$ the branches are,
\begin{itemize}
 \item $2q$ branches of type  $A'_3$.
 \item $2q^2$ branches of type $D_3$.
 \item $\displaystyle\frac{q(q-1)}{2}$ branches of type  $B_7$.
 \item $q(q-1)$ branches of the new type $N_3$, and 
 \item $2q(q-1)$ branches of new type $N_1$.
\end{itemize}
\end{prop}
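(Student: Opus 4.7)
The plan is to mirror the strategy used in the proof of Proposition~\ref{new2} for type $A_3$, but now replacing the split orthogonal group $O_2^+(\Fq)$ with the anisotropic orthogonal group $O_2^-(\Fq)$, which is the structural difference between the types $A_3$ and $A_3'$. The canonical form of a type $A_3'$ matrix with respect to $\beta_3$ will look like $A = \begin{pmatrix} a_0 & & \gamma & \\ & a_0 & & -\gamma \\ & & a_0 & \\ & & & a_0 \end{pmatrix}$ (or an equivalent representative), where $a_0 = \pm 1$ and $\gamma$ is a fixed non-square in $\Fq^*$. The first step is therefore to check by direct computation that the centralizer of $A$ in $Sp_4(\Fq)$ is of the form
\[
Z_{Sp_4(\Fq)}(A) = \left\{ \begin{pmatrix} C & D \cdot {}^t\!C^{-1} \\ & \Delta' C \Delta' \end{pmatrix} \;\middle|\; D = {}^tD,\ C \cdot \Delta' \cdot {}^tC = \Delta',\ D \in M_2(\Fq) \right\},
\]
where $\Delta'$ is the matrix of the anisotropic quadratic form in two variables (so $C$ lies in $O_2^-(\Fq)$).

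Once the centralizer is identified, I would take $B = \begin{pmatrix} C & D \cdot {}^t\!C^{-1} \\ & \Delta' C \Delta' \end{pmatrix}$, a generic conjugator $X = \begin{pmatrix} Z & Y \cdot {}^t\!Z^{-1} \\ & \Delta' Z \Delta' \end{pmatrix}$, and a second element $B'$ in the centralizer, and expand $XB = B'X$. As in the $A_3$ case, the top-left block equation $ZC = C'Z$ immediately reduces the problem to choosing $C$ as a representative of a conjugacy class in $O_2^-(\Fq)$ (listed in Appendix~\ref{appendixb}) and $Z$ as an element of its centralizer. The remaining block equation then becomes
\[
Z \cdot D \Delta' C \Delta' + Y \Delta' Z C \Delta' = C \cdot Y \Delta' Z \Delta' + D' \Delta' C Z \Delta',
\]
which is the direct analogue of the key equation in the proof of Proposition~\ref{new2}.

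The bulk of the work is a case-by-case analysis over the conjugacy classes of $O_2^-(\Fq)$. For each such representative $C$, I would use the freedom in $Y$ and the symmetric matrix $D$ to put $B$ into a standard shape, then read off the common centralizer $Z_{Sp_4(\Fq)}(A,B)$, match it against the centralizer data in the conjugacy table (Appendix~\ref{appendixa}) and the new types $N_1$ and $N_3$ introduced in Propositions~\ref{new1}–\ref{new2}, and count the number of orbits via the orbit–stabilizer formula. The central $C = \pm I_2$ cases will produce the $2q$ branches of type $A_3'$ together with the $2q^2$ branches of type $D_3$ (exactly as in the $A_3$ case, since these cases do not see the anisotropy). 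The non-central semisimple classes of $O_2^-(\Fq)$ are the source of the $B_7$ branches, whose count $q(q-1)/2$ reflects the fact that $O_2^-(\Fq)$ has $(q-1)/2$ pairs of non-central diagonalisable classes (over $\Fqq$), versus $(q-3)/2$ in the split case that produced $B_9$. The new-type branches $N_3$ and $N_1$ arise from the remaining unipotent/mixed cases, with the $N_1$ count $2q(q-1)$ coinciding with the $A_3$ computation because the relevant centralizer structure is insensitive to the split/non-split distinction.

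The main obstacle, as in Proposition~\ref{new2}, will be the bookkeeping: correctly handling the twist by the non-square $\gamma$, making sure the resulting common centralizers in each case genuinely match $Z_{Sp_4(\Fq)}$-classes of the types $A_3'$, $B_7$, $D_3$, $N_1$, $N_3$ (and not, for example, $B_9$ or another lookalike), and verifying that the per-class orbit counts sum correctly to $|Z_{Sp_4(\Fq)}(A)|$ divided by the appropriate centralizer sizes. Once this verification is done, the statement of the proposition follows.
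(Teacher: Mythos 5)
Your proposal follows the paper's own proof of this proposition almost verbatim: take the canonical form of $A$ with respect to $\beta_3$, identify $Z_{Sp_4(\Fq)}(A)$ as the block upper-triangular group whose top-left block $C$ ranges over the non-split orthogonal group $O_2^-(\Fq)$, and then rerun the $XB=B'X$ reduction of Proposition~\ref{new2} case by case over the conjugacy classes of $O_2^-(\Fq)$ listed in Appendix~\ref{appendixb}; your attribution of the $B_7$ branches to the $(q-1)/2$ non-central rotation classes (versus the $(q-3)/2$ classes giving $B_9$ in the split case) matches the counts. One concrete slip: the representative you display has off-diagonal block $\mathrm{diag}(\gamma,-\gamma)$, and since $-\det\bigl(\mathrm{diag}(\gamma,-\gamma)\bigr)=\gamma^2$ is a square this block is congruent to $\mathrm{diag}(1,-1)$, so the displayed matrix is actually of type $A_3$ (split case); the anisotropic block you intend, and which the paper uses, is $\Delta=\mathrm{diag}(1,-\gamma)$.
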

\begin{proof}
A matrix $A$ of this type has a canonical form $\begin{pmatrix} a_0 I_2 &\Delta&\\
&a_0I_2  \end{pmatrix}$  with respect to the bilinear form $\beta_3$ where $I_2$ is the identity matrix, $\Delta=\begin{pmatrix}1& \\ & -\gamma\end{pmatrix}$with $a_0 = \pm1$, and $\gamma$ is such that $\langle \gamma \rangle = \F_q^*$, a non-square. The centralizer of $A$ is
$$ Z_{Sp_4(\Fq)}(A) =\displaystyle\left\{ \begin{pmatrix} C & D.^tC^{-1}\\ & \Delta^{-1} C\Delta \end{pmatrix} \mid D \in M_2(\Fq), D=~^tD,  C.\Delta .{}^tC = \Delta \right\}.
$$
The matrix $C$ lies in the non-split orthogonal group $O_2^-(\Fq)$. Using the steps similar to the proof of Proposition~\ref{new2}, we get the branching rules as per the statement of the lemma, with help of the conjugacy classes of $O_2^-(\Fq)$ described in Appendix~\ref{appendixb}.
\end{proof}
\begin{prop}
 A matrix of type $B_6$, has the following branches:
 \begin{itemize}
  \item $q+1$ branches of type $B_6$,
  \item $q(q-2)$ branches of type $B_2$,
  \item $q+1$ branches of type $B_7$, and
  \item $q+1$ branches of type $B_4$.
 \end{itemize}
\end{prop}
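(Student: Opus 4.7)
The plan is to follow the same template used for the preceding propositions in this section, specialized to the canonical form of type $B_6$ recorded in Table~\ref{TSp4Con} of Appendix~\ref{appendixa}. First I would fix a canonical representative $A$ of type $B_6$, choosing whichever of the bilinear forms $\beta_1,\beta_2,\beta_3$ makes the subsequent centralizer computation cleanest, and then write down $Z_{Sp_4(\Fq)}(A)$ explicitly as a subgroup of $Sp_4(\Fq)$ parametrized by a handful of entries subject to the symplectic relations.

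With $Z_{Sp_4(\Fq)}(A)$ in hand, the next step is to enumerate its conjugacy classes. Following the approach of Propositions~\ref{new1}--\ref{new3}, I would take a general $B\in Z_{Sp_4(\Fq)}(A)$ together with a generic conjugating element $X\in Z_{Sp_4(\Fq)}(A)$ and analyse the equation $XB=B'X$. Using the degrees of freedom in $X$, this reduces $B$ to one of a short list of canonical forms; the analysis naturally splits into subcases according to which entries of $B$ can be made to vanish. On the basis of the counts that must emerge, I anticipate exactly four families of canonical forms:
\begin{itemize}
\item those $B$ lying in the centre of $Z_{Sp_4(\Fq)}(A)$, each contributing a singleton orbit of branch type $B_6$ (with common centralizer equal to $Z_{Sp_4(\Fq)}(A)$ itself);
\item a one-parameter family of generic perturbations with common centralizer conjugate to that of a type $B_2$ matrix;
\item two further families, each of size $q+1$, whose common centralizers jump to the strictly larger centralizers associated with types $B_4$ and $B_7$ respectively.
\end{itemize}

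For each canonical form $B$ I would compute $Z_{Sp_4(\Fq)}(A,B)=Z_{Sp_4(\Fq)}(A)\cap Z_{Sp_4(\Fq)}(B)$ from the residual equations and match it against the centralizer data recorded in Table~\ref{TSp4Con}, thereby identifying the branch type. Counting orbits in each family via the orbit--stabiliser formula (the size of $Z_{Sp_4(\Fq)}(A)$ divided by the $Z_{Sp_4(\Fq)}(A)$-conjugacy-class size of $B$) should then reproduce the tallies $q+1$, $q(q-2)$, $q+1$, $q+1$ claimed in the statement.

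The main obstacle is the bookkeeping in the reduction of $B$ to canonical form: as in Proposition~\ref{new1}, the argument can bifurcate several times according to which diagonal, subdiagonal, or off-diagonal entries of $B$ survive after simplification, and one must argue carefully that distinct canonical forms correspond to distinct orbits rather than to the same orbit counted multiple times. A natural end-of-proof check, and a guard against double-counting, is that the four branch counts sum to $q^2+q+3$, which must equal the total number of conjugacy classes of $Z_{Sp_4(\Fq)}(A)$.
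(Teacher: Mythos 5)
Your template --- canonical form, explicit centralizer, reduction of $B$ via $XB=B'X$, identification of branch types by matching common centralizers against Table~\ref{TSp4Con} --- is exactly the strategy the paper uses, so the approach is not the issue. The gap is that every substantive step is deferred or assumed. You never identify $Z_{Sp_4(\Fq)}(A)$, and the ``four families'' are not derived but read off from the statement you are trying to prove (``on the basis of the counts that must emerge, I anticipate\ldots''). For type $B_6$ the centralizer is not block-triangular as in the earlier propositions: a $B_6$ element has eigenvalues $\lambda,\lambda^{-1}=\lambda^{q}$ each of multiplicity two with $\lambda\in\Fqq\setminus\Fq$, so its centralizer is the isometry group of a hermitian form on $\Fqq^{2}$, i.e.\ a copy of $U_2(\Fq)$ of order $q(q+1)(q^2-1)$; the paper has to exhibit this concretely as a subgroup $\mathbf{B}$ of order $q(q^2-1)$ (a Borel) together with $q+1$ cosets before any enumeration can start. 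Without that structural identification your $XB=B'X$ bookkeeping has no starting point, and arguing that distinct canonical forms give distinct orbits --- the difficulty you yourself flag --- cannot even be begun.

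Your closing consistency check is the right instinct, but you assert its outcome rather than perform it, and performing it is precisely where the real content lies. The branches of $A$ are in bijection with the conjugacy classes of $Z_{Sp_4(\Fq)}(A)\cong U_2(\Fq)$, and the paper's own Section on $U_2(\Fq)$ (Table~\ref{tableI}, Table~\ref{T2by2}) gives that class number as $q^2+2q+1$, distributed as $q+1$ central classes, $q+1$ of type $(2)_1$, $\binom{q+1}{2}$ of type $(1)_1(1)_1$, and $\tfrac{q^2-q-2}{2}$ of type $(1)_2$; by matching common-centralizer orders these correspond to branches whose common centralizers are those of types $B_6$, $B_7$, $B_4$ and $B_2$ respectively. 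Since $q^2+2q+1\neq q^2+q+3$ for $q>2$, a correct execution of your plan cannot ``reproduce the tallies'' $q(q-2)$ and $q+1$ for $B_2$ and $B_4$; you should expect $\binom{q+1}{2}$ branches with common centralizer of order $(q+1)^2$ and $\tfrac{q^2-q-2}{2}$ with common centralizer of order $q^2-1$. Be warned also that both sets of numbers satisfy the class equation of $Z_{Sp_4(\Fq)}(A)$, so only the class-number count, not the element count, detects the discrepancy. In short: establish the isomorphism type of the centralizer first and let the enumeration dictate the counts, rather than steering the case analysis toward a predetermined answer.
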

\begin{proof}
 A matrix of this type, in $Sp_4(\Fq)$ has canonical form $A = \begin{pmatrix} & -1 &  &  \\  1 & b &  &  \\ &  &  & 1 \\ &  & -1 & b \end{pmatrix}$, where $x^2 - bx + 1$ is an irreducible polynomial in $\Fq[x]$ with respect to the form $\beta_2$. This is different from the way the canonical form of this is given in the Table~\ref{TSp4Con}, as the form in the table has entries from the degree $2$ extension $\Fqq$ of $\Fq$. The centralizer in $Sp_4(\Fq)$ is 
\begin{displaymath}
 Z_{Sp_4(\Fq)}(A) = {\bf B} ~\sqcup~\bigsqcup_{w\neq 0}Cl(w).{\bf B}~\sqcup~AD(1).{\bf B},
\end{displaymath}
where $\bf B$ is the subgroup
\begin{displaymath}
 \left\{ \left(\begin{smallmatrix} 1 &  & -by/2 & y \\  & 1 & y & -by/2\\  &  & 1 & \\ & & & 1  \end{smallmatrix}\right).\left(\begin{smallmatrix} x_0 & -x_1 &  &  \\ x_1 & x_0 + bx_1 &  & \\  &  & z_0 & z_1\\ & & -z_1 & z_0 + bz_1  \end{smallmatrix}\right) \mid x_0z_0 + bx_0z_1+ x_1z_1 = 1, x_0z_1 = x_1z_0 \right\},
\end{displaymath}
$Cl(w) = \begin{pmatrix}1& & & \\ & 1& &  \\ bw/2 & w & 1 &  \\ w & bw/2 &  & 1 \end{pmatrix}$ and $AD(1)=\begin{pmatrix}    &  & 1 &  \\   &  &  & -1 \\   1 &  &  & \\    & -1 &  & 
\end{pmatrix}$. 
The subgroup $\bf B$ is of size $q(q^2-1)$, and putting the $(q-1 + 2) = q+1$ cosets together, the $Z_{Sp_4(\Fq)}(A)$ is of size $q(q+1)(q^2-1)$.
The center of $Z_{Sp_4(\Fq)}(A)$ is $\left\{ \left(\begin{smallmatrix}   a_0&-a_1&&\\ a_1&a_0+ba_1 &  &\\  & &a_0 & a_1\\   & &a_1&a_0 + ba_1  \end{smallmatrix}\right) \mid  a_0^2 + ba_0a_1 + a_1^2 = 1\right\}.$
The centralizer of each of these in $Z_{Sp_4(\Fq)}(A)$ is $Z_{Sp_4(\Fq)}(A)$ itself, and these are $q+1$ in number. Hence, $(q+1)$ branches of yype $B_6$. 

Now, we take a matrix $B = \begin{pmatrix} a_0&-a_1&&\\ a_1&a_0+ba_1 & &\\ &&b_0 & b_1\\ &&b_1&b_0 + bb_1 \end{pmatrix}$ with $a_0^2+ba_0a_1 + a_1^2 \neq 1 $ then the $Z_{Sp_4(\Fq)}(A,B)$ is
$$\left\{ \left(\begin{matrix} x_0 & -x_1 & & \\ x_1 & x_0 + bx_1 & &\\ & & z_0 & z_1\\ && -z_1 & z_0 + bz_1  \end{matrix}\right) \mid x_0z_0 + bx_0z_1 + x_1z_1 = 1, x_0z_1 = x_1z_0  \right\}
 $$ 
which is of size $q^2-1$. This is a branch of type $B_2$, and there are $q(q-2)$ of these.
Next, we take $B = \begin{pmatrix}1 & & -b/2 & 1 \\ & 1 & 1 & -b/2 \\ & & 1 &  \\ & & & 1\end{pmatrix}\left(\begin{matrix}   a_0&-a_1&&\\ a_1&a_0+ba_1 &  &\\  & &a_0 & a_1\\   & &a_1&a_0 + ba_1  \end{matrix}\right)$, where $a_0^2+a_0a_1+a_1^2 = 1$, and then 
$$
Z_{Sp_4(\Fq)}(A, B) =   \left\{ \left(\begin{smallmatrix} 1 & & -by/2 & y \\  & 1 & y & -by/2\\  &  & 1 &\\ & &  & 1  \end{smallmatrix}\right).\left(\begin{smallmatrix} x_0 & -x_1 &  &  \\ x_1 & x_0 + bx_1 &  & \\  &  & x_0 & x_1\\ & & -x_1 & x_0 + bx_1  \end{smallmatrix}\right) \mid x_0^2 + bx_0x_1+ x_1^2 = 1 \right\}.
$$
This centralizer is commutative of size $q(q+1)$. This branch $(A,B)$ of $A$ is of type $B_7$, and there are $(q+1)$ such branches.  

Lastly, we take $B = AD(1)\left(\begin{smallmatrix}   a_0&-a_1&&\\ a_1&a_0+ba_1 &  &\\  & &a_0 & a_1\\   & &a_1&a_0 + ba_1  \end{smallmatrix}\right)$, where $a_0^2 + ba_0a_1 = a_1^2 = 1$. The centralizer 
$Z_{Sp_4(\Fq)}(A,B)$ is 
$$ \left\{ \left(\begin{smallmatrix} x_0 & -x_1 &  &  \\ x_1 & x_0 + bx_1 &  & \\  &  & x_0 & x_1\\ & & -x_1 & x_0 + bx_1  \end{smallmatrix}\right)\left(\begin{smallmatrix} y_0 &  &  \frac{-by_1}{2}& y_1 \\  & y_0 & y & \frac{-by_1}{2}\\ \frac{-by_1}{2} & -y_1 & y_0 & \\ -y_1& \frac{-by_1}{2}&  & y_0  \end{smallmatrix}\right) \mid \begin{matrix}x_0^2 + bx_0x_1+ x_1^2 = 1, \\  y_0^2 - \frac{(b^2-4)y_1^2}{4}= 1 \end{matrix} \right\}.
$$
This is a group of size $(q + 1)^2$. This branch $(A,B)$ is of type $B_4$, and there are $q+1$ branches. This completes the proof.
\end{proof}
\begin{prop}
 For a symplectic matrix of type $B_8$, there are:
 \begin{itemize}
  \item $(q-1)$ branches of type $B_8$.
  \item $(q-1)$ branches of type $B_9$.
  \item $\displaystyle\frac{1}{2}(q-1)(q-2)$ branches of type $B_3$.
  \item $\displaystyle\frac{1}{2}q(q-1)$ branches of type $B_2$.
 \end{itemize}
\end{prop}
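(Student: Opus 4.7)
The plan is to follow the template of the preceding propositions: pick a canonical form for $A$, compute $Z_{Sp_4(\Fq)}(A)$ explicitly, enumerate the orbits of its self-conjugation action by identifying each orbit representative $B$, and match the joint centralizer $Z_{Sp_4(\Fq)}(A,B)$ against those of the class types recorded in Appendix~\ref{appendixa}. A type $B_8$ matrix corresponds to a regular semisimple class with a pair of reciprocal eigenvalues $\alpha,\alpha^{-1}\in \Fq^*\setminus\{\pm 1\}$, each of multiplicity two. With respect to $\beta_3=\left(\begin{smallmatrix} & I_2 \\ -I_2 & \end{smallmatrix}\right)$, one may take the canonical form
\[
A = \begin{pmatrix} \alpha I_2 & 0 \\ 0 & \alpha^{-1}I_2 \end{pmatrix}.
\]
Solving the symplectic condition $A\beta_3\,{}^t\!A=\beta_3$ on block matrices then gives $Z_{Sp_4(\Fq)}(A)=\left\{\left(\begin{smallmatrix} X & 0 \\ 0 & (^t\!X)^{-1} \end{smallmatrix}\right):X\in GL_2(\Fq)\right\}\cong GL_2(\Fq)$, a non-commutative group.

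Under this isomorphism the branches of $A$ are in bijection with the $q^2-1$ conjugacy classes of $GL_2(\Fq)$, which split as $q-1$ central classes $X=aI_2$, $q-1$ non-semisimple classes represented by $aI_2+N$ with $N$ a nonzero nilpotent, $\binom{q-1}{2}$ split semisimple classes represented by $\mathrm{diag}(a,b)$ with $a\neq b$ in $\Fq^*$, and $\frac{q(q-1)}{2}$ non-split semisimple classes represented by elements with eigenvalues a Galois pair $\lambda,\lambda^q\in \Fqq\setminus\Fq$. For each such representative $X$, write $B_X=\left(\begin{smallmatrix} X & 0 \\ 0 & (^t\!X)^{-1} \end{smallmatrix}\right)$ and observe that $Z_{Sp_4(\Fq)}(A,B_X)$ is exactly the block-diagonal image of $Z_{GL_2(\Fq)}(X)$, since an element of $Z_{Sp_4(\Fq)}(A)$ commutes with $B_X$ iff its top-left block commutes with $X$.

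The core of the argument is identifying each joint centralizer with the centralizer of the claimed branch type. When $X=aI_2$, the joint centralizer equals all of $Z_{Sp_4(\Fq)}(A)$, so the branch is of type $B_8$, giving $q-1$ branches. When $X=aI_2+N$, the joint centralizer has order $q(q-1)$ and $B_X$ decomposes as two Jordan blocks of size two at reciprocal eigenvalues $a,a^{-1}$, matching the canonical form of type $B_9$ and yielding $q-1$ branches. When $X=\mathrm{diag}(a,b)$ with $a\neq b$, the joint centralizer is the split maximal torus $(\Fq^*)^2$ of $Sp_4(\Fq)$; the pair $(A,B_X)$ is jointly regular semisimple, since the four joint eigenvalues $(\alpha,a),(\alpha,b),(\alpha^{-1},a^{-1}),(\alpha^{-1},b^{-1})$ are pairwise distinct (note that $\alpha\neq\alpha^{-1}$ by hypothesis on $A$), so the branch is of type $B_3$, giving $\binom{q-1}{2}=\frac{(q-1)(q-2)}{2}$ branches. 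Finally, when $X$ is non-split semisimple, the joint centralizer is an anisotropic torus of order $q^2-1$, matching type $B_2$ and giving $\frac{q(q-1)}{2}$ branches. The main obstacle is this last matching step; it boils down to verifying the order and the commutativity structure of each candidate centralizer against the data of Appendix~\ref{appendixa}, and the total $(q-1)+(q-1)+\frac{(q-1)(q-2)}{2}+\frac{q(q-1)}{2}=q^2-1$ serves as a self-consistency check against the class count of $GL_2(\Fq)$.
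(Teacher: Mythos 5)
Your proposal is correct and follows essentially the same route as the paper: reduce to the canonical form $\mathrm{diag}(\alpha I_2,\alpha^{-1}I_2)$ with respect to $\beta_3$, identify $Z_{Sp_4(\Fq)}(A)\cong GL_2(\Fq)$ via $X\mapsto\mathrm{diag}(X,({}^t\!X)^{-1})$, and read the branches off the conjugacy classes of $GL_2(\Fq)$. You in fact supply more detail than the paper does in matching each $GL_2(\Fq)$ class type to $B_8$, $B_9$, $B_3$, $B_2$ and in the consistency check that the counts sum to $q^2-1$.
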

\begin{proof} A matrix $A$ of type $B_8$ has the canonical form $\begin{pmatrix}aI_2 &  \\ & a^{-1}I_2\end{pmatrix}$, where $a \in \Fq^*$ and $a \neq \pm 1$, with respect to the form $\beta_3$. The centralizer of $A$ is: 
$\displaystyle\left\{\begin{pmatrix} C &  \\  & ^tC^{-1}\end{pmatrix}\mid C \in GL_2(\Fq) \right\}$. It is enough to know the conjugacy classes of $GL_2(\Fq)$ here. 
So, each branch is of the form $B = \begin{pmatrix} C & \\ & ^tC^{-1}\end{pmatrix}$, where $C$ is a canonical matrix of its conjugacy class type, and $Z_{Sp_4(Fq)}(A,B) = \left\{ \begin{pmatrix} X&\\&^tX^{-1}\end{pmatrix} \mid XC = CX\right\}$. 
\end{proof}
The next four non-regular types $C_1$,$C_3$, $D_1$, and $D_2$ are non-primary types, i.e., their canonical forms are of the kind: $\begin{pmatrix}A & \\ & B\end{pmatrix}$, where $A$ and $B$ are non-conjugate elements of $Sp_2(\Fq)$. Thus its centralizer is
$$
Z_{Sp_4(\Fq)}\left(\begin{pmatrix}A & \\ & B\end{pmatrix}\right) = \left\{\begin{pmatrix}X& \\ & Z\end{pmatrix}\mid X\in Z_{Sp_2(\Fq)}(A), Z\in Z_{Sp_2(\Fq)}(B)  \right\}.
$$
Hence the branches of such a matrix are of the form $\begin{pmatrix}C & \\ & D\end{pmatrix}$, where $C$ is a branch of $A$, and $D$ is a branch of $B$. Thus we can use this argument to prove the next four propositions regarding the branching of the types $C_1$, $C_3$, $D_1$, and $D_2$. 
\begin{prop}
 For a symplectic matrix of type $C_1$, these are the branches:
 \begin{itemize}
  \item $2(q+1)$ branches of type $C_1$.
  \item $4(q+1)$ branches of type $C_2$.
  \item $\displaystyle\frac{1}{2}(q-3)(q+1)$ branches of type $B_5$.
  \item $\displaystyle\frac{1}{2}(q^2-1)$ branches of type $B_4$.
 \end{itemize}
\end{prop}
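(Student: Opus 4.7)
The plan is to apply the framework set out in the paragraph immediately preceding the proposition. A canonical $C_1$-type matrix takes the block-diagonal form $M = \mathrm{diag}(\varepsilon I_2, C)$ with $\varepsilon = \pm 1$ and $C \in Sp_2(\Fq)$ of non-split semisimple type $Ir$; the two blocks are non-conjugate in $Sp_2(\Fq)$ since one is central and the other is not. The centralizer then decomposes as
$$Z_{Sp_4(\Fq)}(M) = Sp_2(\Fq) \times Z_{Sp_2(\Fq)}(C) = Sp_2(\Fq) \times T,$$
where $T$ is the non-split maximal torus of order $q+1$. Because $T$ is abelian, each of its elements is a singleton conjugacy class, so a branch of $M$ is indexed by a pair $(X,Y)$ with $X$ a representative of a conjugacy class of $Sp_2(\Fq)$ and $Y\in T$. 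Using the first column of Table~\ref{tableIII}, which shows that $Sp_2(\Fq)$ has $q+4$ conjugacy classes, the total branch count is $(q+4)(q+1)$.

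The type of each branch $(M,N)$ with $N = \mathrm{diag}(X,Y)$ is determined by the common centralizer
$$Z_{Sp_4(\Fq)}(M,N) = Z_{Sp_2(\Fq)}(X) \times T,$$
which depends only on the $Sp_2(\Fq)$-type of $X$. Stratifying over the five types of Table~\ref{tableIII}: when $X$ is central of Cent type ($2$ classes), the common centralizer equals $Z_{Sp_4(\Fq)}(M)$ itself, so the branch type is $C_1$, contributing $2(q+1)$ branches. When $X$ is of unipotent type $A_1$ or $A_2$ ($4$ classes in total), the common centralizer is abelian of order $2q(q+1)$, which I would match with the $C_2$ centralizer, giving $4(q+1)$ branches. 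When $X$ is of split semisimple type $D$ ($(q-3)/2$ classes), the common centralizer is abelian of order $(q-1)(q+1) = q^2-1$, matching $B_5$ and contributing $\tfrac{(q-3)(q+1)}{2}$ branches. When $X$ is of non-split type $Ir$ ($(q-1)/2$ classes), the common centralizer is abelian of order $(q+1)^2$, matching $B_4$ and contributing $\tfrac{q^2-1}{2}$ branches. The four counts sum to $(q+4)(q+1)$, confirming the enumeration.

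The main obstacle is the type identification in the last paragraph: the cardinalities alone are not sufficient in principle to distinguish the named $Sp_4(\Fq)$ types, so one must verify that the $\F_q$-algebraic structure of each product $Z_{Sp_2(\Fq)}(X)\times T$ coincides with the centralizer of the canonical representative of $C_2$, $B_5$, or $B_4$ listed in Appendix~\ref{appendixa}. In particular, separating $B_4$ from $B_5$ comes down to tracking which of the two one-dimensional tori in the product is split and which is non-split (non-split $\times$ non-split for $B_4$ versus split $\times$ non-split for $B_5$). Once those identifications are made, the proof is just the bookkeeping above.
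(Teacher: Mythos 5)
Your argument is correct and is essentially the paper's: the paper handles $C_1$ (together with $C_3$, $D_1$, $D_2$) by exactly the observation you use, namely that the canonical form is block-diagonal with two non-conjugate $Sp_2(\Fq)$ blocks, so the centralizer is the direct product of the block centralizers and the branches are pairs consisting of a branch of each block. Your extra care in pinning down the branch types via the split/non-split structure of the tori, rather than by centralizer order alone, is a sensible refinement of a point the paper leaves implicit.
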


\begin{prop}
 For a symplectic matrix of type $C_3$, these are the branches:
 \begin{itemize}
\item $2(q-1)$ branches of type $C_3$.
\item $4(q-1)$ branches of type $C_4$.
\item $\displaystyle\frac{1}{2}(q-3)(q-1)$ branches of type $B_3$.
\item $\displaystyle\frac{1}{2}(q-1)^2$ branches of type $B_5$.
\end{itemize}
\end{prop}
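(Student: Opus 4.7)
The plan is to apply the non-primary structural observation made just before the $C_1$-proposition: a matrix $A$ of type $C_3$ has a canonical block form $A = \begin{pmatrix}A_0 & \\ & B_0\end{pmatrix}$ with $A_0, B_0 \in Sp_2(\Fq)$ non-conjugate, for which
$$
Z_{Sp_4(\Fq)}(A) = \left\{\begin{pmatrix}X & \\ & Y\end{pmatrix} : X \in Z_{Sp_2(\Fq)}(A_0),\ Y \in Z_{Sp_2(\Fq)}(B_0)\right\}.
$$
Comparing the $(q-1)$-factors appearing throughout the claim with the $(q+1)$-factors in the $C_1$-proposition (and confirming via the total $(q+4)(q-1)$ obtained by summing the four entries) I would take $A_0 = \pm I_2$ central and $B_0$ of split regular semisimple type $D$, so that $Z_{Sp_4(\Fq)}(A) \cong Sp_2(\Fq) \times T$ with $T \cong C_{q-1}$ a split maximal torus of $Sp_2(\Fq)$.

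Since this centralizer is a direct product, its conjugacy classes, and hence the branches of $A$, are indexed by pairs $(\Gamma, t)$ where $\Gamma$ is a conjugacy class of $Sp_2(\Fq)$ and $t \in T$. For each representative $M = \begin{pmatrix}X & \\ & t\end{pmatrix}$ the joint centralizer is $Z_{Sp_2(\Fq)}(X) \times T$, and the $Sp_4(\Fq)$-type of the branch $(A,M)$ is read off from this centralizer via the alternative (centralizer-based) definition of class type given in the Preliminaries. Sorting by the five types of Table~\ref{tableIII} produces the four claimed cases: for the two central $X$ (type $C$) the centralizer is unchanged, so the branch is again of type $C_3$, yielding $2(q-1)$; for the four unipotent $X$ (types $A_1, A_2$) the centralizer drops to a group of order $2q(q-1)$, matching the $C_4$-centralizer and giving $4(q-1)$ branches; for the $(q-3)/2$ split semisimple $X$ (type $D$) the joint centralizer is $C_{q-1}\times C_{q-1}$, matching the ``split/split'' type $B_3$ and giving $(q-3)(q-1)/2$; and for the $(q-1)/2$ non-split semisimple $X$ (type $Ir$) the joint centralizer is $C_{q+1}\times C_{q-1}$, matching the ``mixed'' type $B_5$ and giving $(q-1)^2/2$.

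The main obstacle is precisely this last type identification: verifying that the direct-product tori $C_{q-1}\times C_{q-1}$ and $C_{q+1}\times C_{q-1}$ really correspond to the labels $B_3$ and $B_5$ of Table~\ref{TSp4Con}, rather than some neighbouring torus-type centralizer. The decisive cross-check is the $C_1$-proposition itself, where the centralizer $C_{q+1}\times C_{q-1}$ already arose in the analogous mixed split/non-split case and was identified there as $B_5$; together with the parallel split/split vs.\ non-split/non-split distinction ($B_5$ vs.\ $B_4$) visible in the $C_1$-branching, this fixes the dictionary between these torus-centralizers and the labels $B_3, B_4, B_5$, and then the counts stated in the proposition are forced.
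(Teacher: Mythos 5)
Your proposal is correct and follows exactly the route the paper takes: the paper treats $C_1, C_3, D_1, D_2$ uniformly via the observation that for a non-primary type the centralizer is the direct product $Z_{Sp_2(\Fq)}(A_0)\times Z_{Sp_2(\Fq)}(B_0)$, so branches are pairs of $Sp_2(\Fq)$-branches, and it leaves the $C_3$ bookkeeping implicit. Your identification of the blocks (central $\times$ type $D$), the resulting counts from Table~\ref{tableIII}, and the matching of the joint centralizers $C_{q-1}\times C_{q-1}$ and $C_{q+1}\times C_{q-1}$ with the labels $B_3$ and $B_5$ are all consistent with Table~\ref{TSp4Con}.
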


\begin{prop}
 For a symplectic matrix of type $D_1$, the branching data is as follows:
 \begin{center}
   \begin{tabular}{cc|cc} \hline
    \text{Branch Type} & \text{No. of Branches} & \text{Branch Type} & \text{No. of Branches} \\ \hline
    $B_3$ & $\frac{(q-3)^2}{4}$ & $C_3$ & $2(q-3)$ \\
    $B_4$ & $\frac{(q-1)^2}{4}$ & $C_4$ & $4(q-3)$  \\
    $B_5$ & $\frac{1}{2}(q-1)(q-3)$  &  $D_1$ & 4   \\
    $C_1$ & $2(q-1)$ & $D_2$ & 16\\
    $C_2$ & $4(q-1) $ &    $D_3$ & 16.  \\ \hline 
 \end{tabular}
\end{center}
\end{prop}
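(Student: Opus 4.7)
The plan is to apply the block-diagonal reduction set up for non-primary types immediately before the statement. The canonical form of type $D_1$ is $\mathrm{diag}(A,B)$ with $A,B\in Sp_2(\Fq)$ non-conjugate; the centralizer in $Sp_4(\Fq)$ splits as
\[
Z_{Sp_4(\Fq)}(\mathrm{diag}(A,B))=Z_{Sp_2(\Fq)}(A)\times Z_{Sp_2(\Fq)}(B),
\]
and every branch is represented by $\mathrm{diag}(X,Y)$ with $X,Y$ conjugacy-class representatives in the respective factors. A sanity check is that the sum of the tabulated counts simplifies to $(q+4)^2$, the square of the number of conjugacy classes of $Sp_2(\Fq)$; this forces $Z_{Sp_2(\Fq)}(A)=Z_{Sp_2(\Fq)}(B)=Sp_2(\Fq)$, so $A,B$ are both central and, being non-conjugate, must equal $\{I_2,-I_2\}$. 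I fix $D_1=\mathrm{diag}(I_2,-I_2)$ with respect to the form $\beta_1$, so that $Z_{Sp_4(\Fq)}(D_1)=Sp_2(\Fq)\times Sp_2(\Fq)$ sits block-diagonally.

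Under this identification, the conjugacy classes of $Z_{Sp_4(\Fq)}(D_1)$ are the $(q+4)^2$ products $c_X\times c_Y$ of $Sp_2(\Fq)$-classes. I would partition them by the ordered pair of $Sp_2(\Fq)$-types $(\tau_X,\tau_Y)\in\{C,A_1,A_2,D,Ir\}^2$, with class cardinalities $2,2,2,(q-3)/2,(q-1)/2$ respectively (Table~\ref{tableIII}). The common centralizer of a branch $(D_1,\mathrm{diag}(X,Y))$ equals $Z_{Sp_2(\Fq)}(X)\times Z_{Sp_2(\Fq)}(Y)$, namely $Sp_2(\Fq)$ when the type is $C$, an order-$2q$ unipotent centralizer when it is $A_1$ or $A_2$, a split torus of order $q-1$ when it is $D$, and an anisotropic torus of order $q+1$ when it is $Ir$.

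The main obstacle is matching each product centralizer to an $Sp_4(\Fq)$-type in Srinivasan's list (Appendix~\ref{appendixa}, Table~\ref{TSp4Con}), by comparing each direct-product subgroup, embedded block-diagonally in $Sp_4(\Fq)$, against the centralizer of a canonical form. The resulting dictionary is
\[
\begin{array}{l}
(C,C)\mapsto D_1,\quad (C,A_i)\text{ or }(A_i,C)\mapsto D_2,\quad (A_i,A_j)\mapsto D_3,\\
(C,D)\text{ or }(D,C)\mapsto C_3,\quad (C,Ir)\text{ or }(Ir,C)\mapsto C_1,\\
(A_i,D)\text{ or }(D,A_i)\mapsto C_4,\quad (A_i,Ir)\text{ or }(Ir,A_i)\mapsto C_2,\\
(D,D)\mapsto B_3,\quad (Ir,Ir)\mapsto B_4,\quad (D,Ir)\text{ or }(Ir,D)\mapsto B_5.
\end{array}
\]

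Once the dictionary is settled the counts are immediate: multiplying the $Sp_2(\Fq)$-class cardinalities for each sub-case yields $2\cdot 2=4$ branches of type $D_1$; $2\cdot 2+2\cdot 2+2\cdot 2+2\cdot 2=16$ of type $D_2$; $(2+2)(2+2)=16$ of type $D_3$; $2(q-3)$ of $C_3$; $2(q-1)$ of $C_1$; $4(q-3)$ of $C_4$; $4(q-1)$ of $C_2$; $((q-3)/2)^2=(q-3)^2/4$ of $B_3$; $((q-1)/2)^2=(q-1)^2/4$ of $B_4$; and $2\cdot\frac{q-3}{2}\cdot\frac{q-1}{2}=(q-1)(q-3)/2$ of $B_5$. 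These add to $(q+4)^2$, confirming the stated table.
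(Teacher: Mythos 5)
Your proposal is correct and follows essentially the same route as the paper: the paper proves this proposition (together with those for $C_1$, $C_3$, $D_2$) by the single observation that $D_1$ is non-primary with $Z_{Sp_4(\F_q)}(\mathrm{diag}(I_2,-I_2))\cong Sp_2(\F_q)\times Sp_2(\F_q)$, so the branches are exactly the products of $Sp_2(\F_q)$-conjugacy classes, matched to the $Sp_4(\F_q)$ types via their centralizers. The only cosmetic remark is that you need not infer the canonical form from the total count $(q+4)^2$ --- the representative $\mathrm{diag}(1,1,-1,-1)$ and its centralizer order $q^2(q^2-1)^2$ are given directly in Table~\ref{TSp4Con}.
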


\begin{prop}
 A symplectic matrix of type $D_2$ has the following branches:
\begin{itemize}
\item $4q$ branches of type $D_2$,
\item $8q$ branches of type $D_3$,
\item $q(q-3)$ branches of type $C_4$, and
\item $q(q-1)$ branches of type $C_2$.
\end{itemize}
\end{prop}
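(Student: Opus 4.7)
The plan is to exploit the non-primary block structure described in the paragraph preceding the $C_1$ proposition. Reading the canonical form from Srinivasan's classification (Table~\ref{TSp4Con} in Appendix~\ref{appendixa}), a representative of type $D_2$ has the shape $A = \left(\begin{smallmatrix}\veps I_2 & \\ & M \end{smallmatrix}\right)$, where $\veps \in \{\pm 1\}$ and $M \in Sp_2(\Fq)$ is a regular matrix of type $A_1$ or $A_2$. Hence $Z_{Sp_4(\Fq)}(A) = Sp_2(\Fq) \times Z_{Sp_2(\Fq)}(M)$, and by Proposition~\ref{Sp2Preg} the second factor is commutative of order $2q$.

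Every matrix commuting with $A$ is block-diagonal, $B = \left(\begin{smallmatrix}X & \\ & Y \end{smallmatrix}\right)$ with $X \in Sp_2(\Fq)$ arbitrary and $Y \in Z_{Sp_2(\Fq)}(M)$. Because the abelian second factor fixes $Y$, the $Z_{Sp_4(\Fq)}(A)$-orbit of $B$ equals $\{(X', Y) : X' \sim_{Sp_2(\Fq)} X\}$. Thus the branches of $A$ are in bijection with pairs (conjugacy class of $X$ in $Sp_2(\Fq)$, element $Y$ of $Z_{Sp_2(\Fq)}(M)$); equivalently, branches of the central matrix $\veps I_2$ in $Sp_2(\Fq)$ times branches of $M$ in $Z_{Sp_2(\Fq)}(M)$. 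By Proposition~\ref{Sp2Pcent} the first factor yields $q+4$ classes, distributed among the $Sp_2$-types $C, A_1, A_2, D, Ir$ with multiplicities $2, 2, 2, \tfrac{q-3}{2}, \tfrac{q-1}{2}$, and the second contributes an independent factor $2q$. The resulting total $(q+4)\cdot 2q = 2q^2 + 8q$ equals $4q + 8q + q(q-3) + q(q-1)$, as required.

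To identify the $Sp_4(\Fq)$-type of each branch, one uses that $Z_{Sp_4(\Fq)}(A,B) = Z_{Sp_2(\Fq)}(X) \times Z_{Sp_2(\Fq)}(M)$ is determined entirely by the $Sp_2$-type of $X$. When $X$ is central the centralizer coincides with $Z_{Sp_4(\Fq)}(A)$ itself, so the branch retains type $D_2$, contributing $2 \cdot 2q = 4q$ branches. When $X$ is of type $A_1$ or $A_2$, both factors have size $2q$ and the resulting (regular, regular) product matches the centralizer of type $D_3$, contributing $(2+2)\cdot 2q = 8q$ branches. When $X$ is of split-torus type $D$ the centralizer has order $2q(q-1)$ and matches $C_4$, giving $\tfrac{q-3}{2}\cdot 2q = q(q-3)$ branches; finally, when $X$ is of anisotropic type $Ir$ the centralizer has order $2q(q+1)$ and matches $C_2$, giving $\tfrac{q-1}{2}\cdot 2q = q(q-1)$ branches. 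The only step requiring care is the matching of these four product-centralizer shapes with the $D_2, D_3, C_4, C_2$ entries of Table~\ref{TSp4Con}; this is the same book-keeping implicit in the $D_1$ proposition, where every $B$-, $C$-, and $D$-type arises as a specific pairing of $Sp_2(\Fq)$-types on the two diagonal blocks.
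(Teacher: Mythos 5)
Your proposal is correct and follows essentially the same route as the paper: the paper handles $D_2$ (together with $C_1$, $C_3$, $D_1$) via the general observation that a non-primary block-diagonal class $\operatorname{diag}(A,B)$ has centralizer $Z_{Sp_2(\Fq)}(A)\times Z_{Sp_2(\Fq)}(B)$, so its branches are exactly the pairs of branches of the two $Sp_2(\Fq)$ blocks, which for $D_2$ means pairing the $q+4$ conjugacy classes of $Sp_2(\Fq)$ (from the central block) with the $2q$ elements of the abelian centralizer of the regular $A_1/A_2$ block. Your count $(q+4)\cdot 2q = 4q+8q+q(q-3)+q(q-1)$ and the identification of the four resulting centralizer shapes with $D_2$, $D_3$, $C_4$, $C_2$ match the paper's table exactly.
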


\noindent These are the branching rules of the non-regular types. The remaining types of similarity classes of $Sp_4(\Fq)$ are the regular types. So, their centralizers in $Sp_4(\Fq)$ are commutative subgroups. Each of these matrices have the same regular type as their only branch. We summarize this in the following,
\begin{prop}
For all the $\Reg$ types, the only branch is that type itself, and the the number of branches is the size of its centralizer. The branching for the 11 $\Reg$ types in $Sp_4(\Fq)$ is as follows:
\begin{center}
\begin{tabular}{cc|cc} \hline
    $\Reg$\text{ Type} & \text{No. of Branches} & $\Reg$\text{ Type} & \text{No. of Branches} \\ \hline
    $A_4$ & $2q^2$ & $B_7$ & $q(q+1)$ \\
    $B_1$ & $q^2 + 1$ & $B_9$ & $q(q-1)$ \\
    $B_2$ & $q^2-1$ & $C_2$ & $2q(q+1)$ \\
    $B_3$ & $(q-1)^2$ & $C_4$ & $2q(q-1)$ \\
    $B_4$ & $(q+1)^2$ &  $D_3$ & $4q^2$. \\ 
    $B_5$ & $q^2-1$ &&\\ \hline
\end{tabular}
\end{center}
\end{prop}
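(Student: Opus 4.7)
The plan is to exploit the defining feature of regular elements in a single stroke: a similarity class of $Sp_4(\Fq)$ is of regular type precisely when the centralizer $Z_{Sp_4(\Fq)}(A)$ of any representative $A$ is abelian, since in that case it coincides with $\Fq[A]\cap Sp_4(\Fq)$. This one property will yield both assertions of the proposition simultaneously.

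First I would establish type preservation. Fix $A$ of a regular type and any $B\in Z_{Sp_4(\Fq)}(A)$. Because $Z_{Sp_4(\Fq)}(A)$ is commutative, every element of it commutes with $B$, giving the inclusion $Z_{Sp_4(\Fq)}(A)\subseteq Z_{Sp_4(\Fq)}(B)$. Hence
\[
Z_{Sp_4(\Fq)}(A,B)\;=\;Z_{Sp_4(\Fq)}(A)\cap Z_{Sp_4(\Fq)}(B)\;=\;Z_{Sp_4(\Fq)}(A),
\]
and the centralizer-based definition of similarity class type from Section~\ref{SecPrelim}, transferred to the symplectic setting, forces $(A,B)$ to have the same type as $A$. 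The orbit count is then immediate from the same commutativity: the conjugation action of $Z_{Sp_4(\Fq)}(A)$ on itself is trivial, so every element is its own orbit, and the number of branches of $A$ equals $|Z_{Sp_4(\Fq)}(A)|$.

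What remains is bookkeeping across the eleven types $A_4, B_1, B_2, B_3, B_4, B_5, B_7, B_9, C_2, C_4, D_3$. For each, I would read the centralizer order off Srinivasan's classification as reproduced in Table~\ref{TSp4Con} of Appendix~\ref{appendixa} and compare it with the value tabulated in the statement. The centralizers that appear are maximal tori of $Sp_4(\Fq)$ and small abelian extensions thereof---for instance $B_1$ yields a Coxeter-type non-split torus of order $q^2+1$, the pair $C_2, C_4$ arises from products of $Sp_2$-tori of orders $2q(q\pm 1)$, the $B_3, B_5, B_9$ row gives split or quasi-split tori of orders $(q-1)^2, q^2-1, q(q-1)$, and $D_3$ gives the torus of order $4q^2$---so nothing substantive is to be computed once the centralizer has been identified.

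The only genuine obstacle is administrative: one must check that exactly these eleven of the twenty-one conjugacy class types of $Sp_4(\Fq)$ are regular in the sense above, and that the remaining ten are precisely those whose branching has already been handled in the preceding propositions of this section. With that match-up in place, the proposition follows from the two-line argument above together with a row-by-row comparison against the centralizer column of Table~\ref{TSp4Con}. I expect no essential difficulty beyond confirming this partition of the class types.
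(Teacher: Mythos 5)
Your argument is correct and is essentially the paper's own: the paper likewise observes that the eleven remaining types are regular, so their centralizers are commutative, whence every element of $Z_{Sp_4(\Fq)}(A)$ is its own conjugation orbit with common centralizer equal to $Z_{Sp_4(\Fq)}(A)$ itself, and the branch counts are read off from the centralizer orders in Table~\ref{TSp4Con}. Your explicit verification that $Z_{Sp_4(\Fq)}(A,B)=Z_{Sp_4(\Fq)}(A)$ (which pins down the type via the centralizer-based definition) is a slightly more careful rendering of the same step the paper leaves implicit.
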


\subsection{Branching rules of the new types in $Sp_4(\Fq)$}
In the process of computing the branching rules for $A_2, A_3$ and $A_3'$ in the Proposition~\ref{new1},~\ref{new2} and~\ref{new3} we get three new types $N_1, N_2$ and $N_3$. Thus to complete the branching rule for arbitrary size tuples we further need branching rules for these new ones.
\begin{prop}
 For a commuting pair of symplectic matrices, of type $N_1$, the branches are,
\begin{itemize}
 \item $2q^2$ branches of type $N_1$.
 \item $q^2(q-1)$ branches of type $N_3$.
 \item $2q^2$ branches of type $D_3$.
\end{itemize}
\end{prop}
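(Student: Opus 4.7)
The plan is to parametrize elements of $Z := Z_{Sp_4(\Fq)}(A)$ for a canonical $A$ of type $N_1$ using the five parameters $(x_0, z_0, y_2, z_1, x_1)$ with $x_0, z_0 \in \{\pm 1\}$ and $y_2, z_1, x_1 \in \Fq$, so that $|Z| = 4q^3$, and then compute the conjugation action of $Z$ on itself. Writing a generic $B = B(x_0,z_0,y_2,z_1,x_1)$ and $X = B(x'_0,z'_0,y'_2,z'_1,x'_1)$, I would expand $XB$ and $BX$ entry-by-entry and check that the only nontrivial equations come from the $(1,3)$ and $(2,4)$ entries, namely
\begin{equation*}
y_2(x'_0 - z'_0) = y'_2(x_0 - z_0) \quad \text{and} \quad z_0 x_0 y_2(z'_0 - x'_0) + z'_0 x'_0 y'_2(x_0 - z_0) = 0.
\end{equation*}
Using $x_0^2 = z_0^2 = x_0'^2 = z_0'^2 = 1$, these two equations combine (after eliminating $y'_2$ when possible) to the single constraint $y_2(x'_0 - z'_0)(x'_0 z'_0 + 1) = 0$, which governs the case split.

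Next I would run the case analysis. In case (i) $x_0 = z_0$ and $y_2 = 0$, every $X \in Z$ commutes with $B$, so each such $B$ is a singleton orbit. The count is $2 \cdot q \cdot q = 2q^2$ elements, giving the $2q^2$ branches of type $N_1$ (these $B$'s form the center of $Z$). In case (ii) $x_0 = z_0$ and $y_2 \neq 0$, the equations force $x'_0 = z'_0$ in the centralizer, so $|Z_Z(B)| = 2q^3$ and each orbit has size $2$; there are $2q^2(q-1)$ such $B$'s yielding $q^2(q-1)$ orbits. In case (iii) $x_0 \neq z_0$, the $(1,3)$ equation forces $y'_2 = y_2(x'_0-z'_0)/(x_0-z_0)$, and one checks that $(2,4)$ is then automatic for every $(x'_0,z'_0)$; the centralizer has size $4q^2$ regardless of whether $y_2 = 0$, so each orbit has size $q$. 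Pooling both subcases of (iii) produces $2q^2$ orbits. The identities $2q^2 \cdot 1 + q^2(q-1)\cdot 2 + 2q^2 \cdot q = 4q^3$ confirms that every element of $Z$ is accounted for.

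To finish, each centralizer must be identified with the expected similarity class type. Case (i) is transparent: the centralizer is all of $Z$, hence of type $N_1$. In case (iii) the centralizer is the abelian subgroup $\{X \in Z : y'_2 = y_2(x'_0-z'_0)/(x_0-z_0)\}$, which is isomorphic to $(\Z/2)^2 \times \Fq^2$ and is conjugate in $GL_4(\Fq)$ to the centralizer of a $D_3$-canonical matrix listed in the regular table. The main obstacle is case (ii): the resulting centralizer $\{X \in Z : x'_0 = z'_0\}$ has the form
\begin{equation*}
\begin{pmatrix} a & 0 & e & f \\ 0 & a & g & e \\ & & a & 0 \\ & & 0 & a\end{pmatrix}, \quad a \in \{\pm 1\},\ e,f,g \in \Fq,
\end{equation*}
which superficially differs from the $N_3$ centralizer of Proposition~\ref{new2}, where the repeated free parameter sits at the $(1,4),(2,3)$-positions rather than at $(1,3),(2,4)$. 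To conclude these subgroups are $GL_4(\Fq)$-conjugate (and hence of the same type) I would exhibit an explicit conjugating matrix built from a swap of the last two basis vectors combined with a diagonal sign change, verifying that it transports one nilpotent block structure onto the other while preserving the scalar part.
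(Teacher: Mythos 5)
Your proposal is correct and follows essentially the same route as the paper: parametrize the common centralizer $Z$ of an $N_1$-pair, extract the two commutation conditions from the $(1,3)$ and $(2,4)$ entries, split into the cases $x_0=z_0,\ y_2=0$ (central, $2q^2$ branches of type $N_1$), $x_0=z_0,\ y_2\neq 0$ (orbits of size $2$, $q^2(q-1)$ branches of type $N_3$), and $x_0\neq z_0$ (orbits of size $q$, $2q^2$ branches of type $D_3$), counting orbits via centralizer sizes exactly as the paper does. The only cosmetic differences are that the paper uses the $\beta_3$-parametrization, so its case-(ii) centralizer is literally the $N_3$ canonical form and needs no extra basis permutation, and that your ``single combined constraint'' $y_2(x'_0-z'_0)(x'_0z'_0+1)=0$ is the correct reduction only when $x_0\neq z_0$ (when $x_0=z_0$ the condition is $y_2(x'_0-z'_0)=0$ with no third factor) --- but since your case analysis works from the two original equations, nothing is affected.
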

\begin{proof} With respect to the form $\beta_3$ the common centralizer of a pair $(A,B)$ of commuting matrices of type $N_1$ is $Z_{Sp_4(\Fq)}(A,B) = \left\{\begin{pmatrix}a_0 & &b_0 & a_0d_0b_1\\  & d_0 & b_1 & b_2\\  &  & a_0 & \\ &  &  & d_0 \end{pmatrix}\mid a_0,d_0 = \pm 1 \right\}$. Let $C = \begin{pmatrix}a_0 & &b_0 & a_0d_0b_1\\  & d_0 & b_1 & b_2\\ &  & a_0 & \\ &  &  & d_0 \end{pmatrix}$, $X = \begin{pmatrix}x_0 & &y_0 & x_0z_0y_1\\  & z_0 & y_1 & y_2\\  &  & x_0 & \\ &  &  & z_0 \end{pmatrix}$, $C' = \begin{pmatrix}a'_0 & &b'_0 & a'_0d'_0b'_1\\  & d'_0 & b'_1 & b'_2\\ & & a'_0 & \\ &  &  & d'_0 \end{pmatrix}$. Then the equation $XC = C'X$ leads us to $a_0 = a'_0$, $d_0 = d'_0$, $b'_0 = b_0$, and $b_2' = b_2$. Which further gives the equation $z_0b_1 + a_0y_1 = x_0b'_1 + y_1d_0$. We look at cases $a_0 = d_0$, and $d_0 = -a_0$, to get the $b'_1$, and thus the branches as mentioned in the statement.
\end{proof}
\begin{prop}\label{PN123}
\begin{enumerate}
\item  For a pair of commuting matrices, of type $N_2$, there are $2q^3$ branches of the same type.
\item For a pair of commuting matrices of type $N_3$, there are $2q^3$ branches of the same type.
\end{enumerate}
\end{prop}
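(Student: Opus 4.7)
The plan is to show that for both type $N_2$ and type $N_3$, the common centralizer $Z_{Sp_4(\Fq)}(A,B)$ of a commuting pair has already been exhibited in Proposition~\ref{new1} and Proposition~\ref{new2}, and that each of these centralizers is in fact \emph{abelian}. Once commutativity is established, the conjugation action of the centralizer on itself is trivial, so every element is its own orbit, and the number of branches simply equals the order of the centralizer, which is $2q^3$ in both cases. Moreover, since $B \in Z_{Sp_4(\Fq)}(A)$ and the centralizer is abelian, we get $Z_{Sp_4(\Fq)}(A) \subseteq Z_{Sp_4(\Fq)}(B)$, hence $Z_{Sp_4(\Fq)}(A,B) = Z_{Sp_4(\Fq)}(A)$, so the pair $(A,B)$ is again of the same type.

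For type $N_3$, I would write a general element of the centralizer in the block form
$\begin{pmatrix} x_0 I_2 & M \\ 0 & x_0 I_2 \end{pmatrix}$
with $M = \left(\begin{smallmatrix} x_1 & y_1 \\ y_1 & y_2 \end{smallmatrix}\right)$ an arbitrary symmetric $2\times 2$ matrix over $\Fq$ and $x_0 = \pm 1$. A direct block multiplication gives
\begin{equation*}
\begin{pmatrix} x_0 I_2 & M \\ 0 & x_0 I_2 \end{pmatrix}\begin{pmatrix} x_0' I_2 & M' \\ 0 & x_0' I_2 \end{pmatrix} = \begin{pmatrix} x_0 x_0' I_2 & x_0 M' + x_0' M \\ 0 & x_0 x_0' I_2 \end{pmatrix},
\end{equation*}
which is symmetric in primed and unprimed variables, so the centralizer is abelian. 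Counting choices gives $2q^3$ elements, hence $2q^3$ branches, all of type $N_3$.

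For type $N_2$, I would write a general centralizer element as $x_0 I_4 + N$, where $x_0 = \pm 1$ and
$N = \left(\begin{smallmatrix} 0 & y_1 & 0 & x_1 \\ 0 & 0 & 0 & 0 \\ 0 & z_2 & 0 & -y_1 \\ 0 & 0 & 0 & 0 \end{smallmatrix}\right).$
The key observation is that the only nonzero columns of $N$ are columns $2$ and $4$, while rows $2$ and $4$ of every such $N'$ vanish; hence $NN' = 0 = N'N$ for any two such matrices. Consequently
\begin{equation*}
(x_0 I_4 + N)(x_0' I_4 + N') = x_0 x_0' I_4 + x_0 N' + x_0' N = (x_0' I_4 + N')(x_0 I_4 + N),
\end{equation*}
so the centralizer is abelian. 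It has $2 \cdot q^3$ elements (two choices of $x_0$, and $q$ choices each for $y_1, z_2, x_1$), giving $2q^3$ branches, all of type $N_2$.

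The hard part, if any, is conceptual rather than computational: one must verify that the common centralizer $Z(A,B)$ does not shrink to a smaller subgroup when $B$ ranges over $Z(A)$, which would signal the appearance of yet another new type. This is precisely where commutativity of $Z(A)$ is decisive: $B \in Z(A)$ abelian forces $Z(A) \subseteq Z(B)$, so $Z(A,B) = Z(A)$. Thus no further new types appear and the iteration of the branching procedure terminates on $N_1$, $N_2$, $N_3$, consistent with the $24 \times 24$ size of Table~\ref{tableIV}.
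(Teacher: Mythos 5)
Your proposal is correct and follows essentially the same route as the paper: exhibit the common centralizer explicitly, observe that it is commutative (the paper asserts this without the block/nilpotent computations you supply), and conclude that each of its $2q^3$ elements is its own branch of the same type. Your explicit verifications of commutativity (symmetric block $M$ for $N_3$, and $NN'=0=N'N$ for $N_2$) are accurate and simply make precise what the paper calls ``easy to see.''
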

\begin{proof}
 The centralizer of a commuting pair of symplectic matrices $(A,B)$ of type $N_2$ is
 $\left\{ \begin{pmatrix}x_0 & y_1 &  & x_1 \\  & x_0 & &  \\  & z_2 & x_0 & -y_1 \\  & & & x_0\end{pmatrix}\mid x_0^2 =  1 \right\},$ with respect to the bilinear form $\beta_2$. 
 It is easy to see that it is a commutative group and hence $2q^3$ branches. Similarly, for a commuting tuple of type $N_3$, the centralizer is
$\left\{ \begin{pmatrix}x_0 &  & x_1 &y_1 \\  & x_0 &y_1 & y_2 \\ &  & x_0 &  \\  &  &  & x_0\end{pmatrix}\mid  x_0^2 = 1 \right\},$
with respect to the bilinear form $\beta_3$. Again, this one too is commutative.
\end{proof}
\noindent Now, in this process we do not get any new type. Thus, the process of computing branches stops here and we get the complete branching rules for $Sp_4(\Fq)$.

\subsection{Proof of Theorem~\ref{Main2}}\label{SecSp4}
The similarity class types (the existing ones, along with the three new types) are written in the order $\{A_1, A_2, A_3, A'_3, A_4, B_1, B_2, B_3, B_4, B_5, B_6, B_7,\\  B_8, B_9, C_1, C_2, C_3, C_4, D_1, D_2, D_3, N_1, N_2, N_3 \}$. The Propositions~\ref{PA1} to~\ref{PN123} give the branching rules for $Sp_4(\Fq)$. This is summarized in the form of a Table given in Table~\ref{tableIV}, and we call this matrix $BSp_4$. This can be used to compute $c_s(4,k,q)$. However, this gets quite complicated so we refrain from presenting that here.

\section{Branching and Commuting probability}\label{brcp}
In this section, we explore the relation between simultaneous conjugacy of commuting tuples (which is equivalent to computing branching matrix) and commuting probability. Let $G$ be a finite group and $l$ be the number of conjugacy classes in it. Let $g_1, \ldots, g_l$, denote the representatives of the conjugacy classes in $G$. We denote by $Z_G(g_i)$  the centralizer of $g_i$ in $G$, and $C_G(g_i)$ the conjugacy class of $g_i$. 
\begin{lemma}\label{lemmack}
Let $G$ be a finite group, and $k$ be a positive integer. Set $c_G(0)=1$. Then, the number of simultaneous conjugacy classes of $k$-tuples of commuting elements of $G$ is:
$$ c_G(k)= \sum_{i=1}^l c_{Z_G(g_i)}(k-1) = {\bf 1}. B_G^{k}. e_1$$
where ${\bf 1}$ is a row vector with all $1$'s, the vector $e_1$ is the column vector with $1$ at first place and $0$ elsewhere and $B_G$ is the branching matrix of $G$.
\end{lemma}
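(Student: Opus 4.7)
The plan is to establish the two equalities in turn. For the first equality $c_G(k) = \sum_{i=1}^l c_{Z_G(g_i)}(k-1)$, I would use the standard orbit-partitioning argument. Any $k$-tuple $(h_1, \ldots, h_k) \in G^{(k)}$ can be simultaneously conjugated so that $h_1 = g_i$ for a unique class representative $g_i$. Once $h_1$ is fixed at $g_i$, the remaining coordinates $(h_2, \ldots, h_k)$ form a $(k-1)$-tuple of pairwise commuting elements of $Z_G(g_i)$, and the residual simultaneous-conjugation freedom is precisely $Z_G(g_i)$ acting by conjugation. Thus the $G$-orbits on commuting $k$-tuples with first coordinate in the class $C_G(g_i)$ are in bijection with the $Z_G(g_i)$-orbits on $Z_G(g_i)^{(k-1)}$, and summing over $i$ gives the first identity.

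For the second equality $c_G(k) = {\bf 1} \cdot B_G^k \cdot e_1$ I would proceed by induction on $k$. The base case $k = 0$ reduces to ${\bf 1} \cdot e_1 = 1 = c_G(0)$. The pivotal observation driving both $k=1$ and the inductive step is that the column of $B_G$ indexed by the central type (the unique type whose centralizer in $G$ is all of $G$) records the number of conjugacy classes of $G$ of each type: if $n_t$ denotes the number of $G$-classes of type $t$, then $B_G \cdot e_1 = \sum_t n_t\, e_t$. Consequently ${\bf 1} \cdot B_G \cdot e_1 = \sum_t n_t = l = c_G(1)$.

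For the inductive step I would combine the first identity with the observation that the branching data of every iterative centralizer $H = Z_G(g_i)$ is already encoded inside $B_G$ itself: by construction the indexing of $B_G$ is closed under passage to centralizers and contains all of the ``new types'' that arise at deeper levels of the iterative process. So for a class representative $g_i$ of type $t_i$, the induction hypothesis applied to $H$, but with starting vector $e_{t_i}$ in place of $e_1$, yields $c_{Z_G(g_i)}(k-1) = {\bf 1} \cdot B_G^{k-1} \cdot e_{t_i}$. Grouping the sum $\sum_i$ in the first identity by type and then using $B_G \cdot e_1 = \sum_t n_t\, e_t$ gives
$$c_G(k) = \sum_t n_t \cdot {\bf 1} \cdot B_G^{k-1} \cdot e_t = {\bf 1} \cdot B_G^{k-1} \cdot \left(\sum_t n_t\, e_t\right) = {\bf 1} \cdot B_G^{k-1} \cdot B_G \cdot e_1 = {\bf 1} \cdot B_G^k \cdot e_1.$$

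The main obstacle I anticipate is justifying rigorously that a single matrix $B_G$ correctly records branching at every level of the iterative centralizer tree. This amounts to unpacking the indexing convention carefully --- including the new types that only appear deeper in the tree --- and verifying that the branching of any iterative centralizer matches the column of $B_G$ indexed by its type, so that applying the induction hypothesis to a centralizer really does amount to multiplying $B_G$ by a different unit vector. Once this bookkeeping is in place, the induction itself is a routine rearrangement of sums.
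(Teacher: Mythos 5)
Your proposal is correct and follows essentially the same route as the paper: the same orbit--stabilizer bijection gives the first equality, and the second is proved by the same induction, hinging on the observation that the branching matrix of each centralizer $Z_G(g_i)$ sits inside $B_G$ as the sub-block indexed by the branches of its type, so that $c_{Z_G(g_i)}(k-1)={\bf 1}\cdot B_G^{k-1}\cdot e_{t_i}$. The bookkeeping issue you flag at the end is exactly the point the paper addresses by noting that this sub-block multiplies only with itself when $B_G$ is raised to powers.
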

\begin{proof}
Since $c_G(0) = 1$, we get, $c_G(1) = \displaystyle\sum_{i=1}^l 1 = l$, the number of conjugacy classes in $G$. Now, given a $k$-tuple $(h_1, \ldots, h_k)$ of commuting elements in $G$ for $k \geq 2$, the $k-1$-tuple $(h_2, \ldots, h_k) \in Z_G(h_1)^{(k-1)}$. This gives a bijection between the orbits for the conjugation action of $Z_G(h_1)$ on $Z_G(h_1)^{(k-1)}$, and simultaneous conjugacy classes of commuting $k$-tuples in $G^{(k)}$, where the first component is conjugate to $h_1$, hence we get first equality. 

Now, we prove the second equality by induction on $k$. For the sake of clarity, we assume $B_G$ the branching matrix for a finite group $G$, is given with the rows and columns indexed by the conjugacy class types, with the first row and column indexed by the central type, i.e., the centre of $G$. So, by the definition of $G$, the first column consists of the branches of any central element of $G$, which is the number of conjugacy classes of $G$ of each type in $G$. When $k=1$, we have $c_G(1) = \displaystyle \sum_{i=1}^l 1 = \sum_{t}(B_{G})_{t1}$, where $t$ denotes the conjugacy class types in $G$. Recall, for the new types the entries $(B_{G})_{t1} = 0$, hence,  $c_G(1) = \bmf{1}.B_G.e_1$.

Now, assume induction upto $k$. Before going ahead, we make the following observation. From the branching matrix $B_G$, we can obtain the branching matrix for the centralizer subgroup $Z_G(t)$, for each of the types $t$. This we can do by taking a submatrix $B_{Z_G(t)}$ of those entries $(B_{G})_{ab}$, where conjugacy classes $C_a$ and $C_b$ occur in the list of branching of a class $C_t$ of type $t$. Notice that when $B_G$ is multiplied with itself, this submatrix multiplies only with itself. So, we have:
\begin{eqnarray*}
 c_G(k+1) &=& \sum_t (B_{G})_{t1}. c_{Z_G(t)}(k) = \sum_t (B_{G})_{t1}\left(\bmf{1}.B_{Z_G(t)}^k.e_t \right)~\text{ by induction}\\
 &= & \sum_t (B_{G})_{t1}\left(\bmf{1}.\left ((B_{G})_{cd}\right)^k.e_t\right)~\text{ types $c$, $d$ are branches of type $t$}\\
 &= & \sum_t (B_{G})_{t1}\left(\sum_a (B_G^k)_{at}\right)~\text{ type $a$ occurs as in the branching of type $t$}\\
 && \text{$(B_G)^k_{at} = 0$ if type $a$ is not a branch of type $t$.}\\
 &= &\sum_t\sum_a (B_G)^k_{at}.(B_{G})_{t1} =  \sum_a\left(\sum_t (B_G)^k_{at}.(B_{G})_{t1}\right)\\
 &= & \sum_a(B_G)^{k+1}_{a1} = \bmf{1}.B_G^{k+1}.e_1.
\end{eqnarray*}
This completes the proof.
\end{proof}

\begin{proof}[{\bf Proof of Theorem~\ref{TMain}}]
We prove this by induction on $k$. When $k = 2$, it is already known (see~\cite{ET} Theorem IV) that $cp_2(G) = \frac{l}{|G|}$, which is equal to $\frac{c_G(1)}{|G|}$. 
Thus we may assume $k \geq 3$. Using Lescot's formula (Equation~\ref{ELesc}), we get:
\begin{eqnarray*}
cp_{k+1}(G) &=& \frac{1}{|G|}\sum_{i=1}^l\frac{1}{|{C}_G(g_i)|^{k-1}} cp_k(Z_G(g_i))\\
&=& \frac{1}{|G|}\sum_{i=1}^l\frac{1}{|{C}_G(g_i)|^{k-1}}\frac{c_{Z_G(g_i)}(k-1)}{|Z_G(g_i)|^{k-1}}\text{~~~by induction}\\
&=& \frac{1}{|G|}\sum_{i=1}^l\frac{c_{Z_G(g_i)}(k-1)}{|G|^{k-1}} = \frac{1}{|G|^k}\sum_{i=1}^l c_{Z_G(g_i)}(k-1) = \frac{C_G(k)}{|G|^k}.
\end{eqnarray*}
The last step follows from Lemma~\ref{lemmack}. This completes the proof.
\end{proof}

\appendix
\section{Conjugacy classes in $Sp_2(\Fq)$ and $Sp_4(\Fq)$}\label{appendixa}
We first describe the conjugacy classes for $Sp_2(\Fq)\cong SL_2(\Fq)$ in the table below. We refer to the notation introduced in the Section~\ref{branchsp2}.
\vskip2mm
\begin{center}
\begin{tabular}{|c|c|c|c|c|} \hline 
\text{Representative} & \text{No. of Classes} & \text{Centralizer Size} & \text{Class Size} & \text{Notation} \\ \hline
$I_2$, $-I_2$ & $1$, $1$ & $q(q^2-1)$ & $1$ & $C$ \\ \hline
$\left(\begin{smallmatrix}1 & 1 \\ 0&1 \end{smallmatrix}\right)$, $\left(\begin{smallmatrix}-1 & 1 \\ 0&-1 \end{smallmatrix}\right)$ & $1$, $1$ & $2q$ & $\frac{q^2-1}{2}$ & $A_1$ \\ \hline
\begin{tabular}{c}$\left(\begin{smallmatrix} 1 & \eta \\ 0&1 \end{smallmatrix}\right)$, $\left( \begin{smallmatrix} -1 & \eta \\ 0 &-1 \end{smallmatrix}\right)$ \\ $\eta$ \text{\ is a non-square in\ } $\Fq$ \end{tabular} & $1$, $1$ & $2q$ & $\frac{q^2-1}{2}$ & $A_2$ \\ \hline
 $\left(\begin{smallmatrix}\lambda & 0 \\ 0& \lambda^{-1} \end{smallmatrix}\right),  \lambda \neq \pm 1 $ & $\frac{q-3}{2}$ & $(q-1)$ & $q(q+1)$ & $D$ \\ \hline
\begin{tabular}{c}$\left(\begin{smallmatrix}0 & 1\\ -1 & b \end{smallmatrix}\right)$ \\ $x^2-bx +1$ \text{\ is irreducible}\end{tabular} & $\frac{q-1}{2}$ & $q+1$ & $q(q-1)$ & $Ir$ \\ \hline
\end{tabular}
\end{center} \vskip2mm

Next we describe the conjugacy classes of $Sp_4(\Fq)$ which is already known (with respect to $\beta_1$) (see pages: 489-491 from Srinivasan~\cite{Sr} ). We have changed the nomenclature slightly, but kept it as close as possible to that of Srinivasan's. Also here we bring in $\kappa \in \mathbf{F}_{q^4}^*$, a generator of the multiplicative cyclic group. Define $\zeta = \kappa^{q^2-1} \in \mathbf{F}_{q^4} \setminus \Fqq$, $\theta = \kappa^{q^2+1} \in \Fqq^*$, which is, in fact, a generator of $\Fqq^*$. Also define $\eta = \theta^{q-1} \in \Fqq \setminus \Fq$, and $\gamma = \theta^{q+1}$, thus a generator of $\Fq$, and a non-square.\\
\def\arraystretch{1}
\begin{longtable}{|c|c|c|c|} \hline
Class Representative & $\begin{matrix} \text{Number of}\\ \text{Classes} \end{matrix}$ &$\begin{matrix} \text{Order of}\\ \text{Centralizer} \end{matrix}$ & $\begin{matrix} \text{Name of}\\ \text{Type} \end{matrix}$ \\ \hline
$\begin{matrix}a_0I_4\\ a_0 = \pm 1 \end{matrix}$ & 2 & $q^4(q^2-1)(q^4-1)$ & $A_1$ \\ \hline
$\begin{matrix} \left(\begin{smallmatrix}a_0&1&0 &0\\0&a_0&0&0\\0&0&a_0&0\\0&0&0&a_0\end{smallmatrix}\right),~\left(\begin{smallmatrix}a_0&\gamma&0 &0\\0&a_0&0&0\\0&0&a_0&0\\0&0&0&a_0\end{smallmatrix}\right)  \\ a_0= \pm1,~\langle \gamma \rangle = \F_q^* \end{matrix}$ & 4 & $2q^4(q^2-1)$ & $A_2$\\ \hline
$\begin{matrix} \left(\begin{smallmatrix}a_0&1&0 &0\\0&a_0&0&0\\0&0&a_0&-1 \\0&0&0&a_0\end{smallmatrix}\right) \\ a_0= \pm1 \end{matrix}$ & 2 & $2q^3(q-1)$ & $A_3$ \\ \hline
$\begin{matrix} \left(\begin{smallmatrix}a_0&1&0 &0\\0&a_0&0&0\\0&0&a_0&-\gamma\\0&0&0&a_0\end{smallmatrix}\right) \\ a_0= \pm1,~\langle \gamma \rangle = \F_q^*\end{matrix}$ & 2 & $2q^3(q-1)$ & $A'_3$ \\ \hline
$\begin{matrix} \left(\begin{smallmatrix}a_0&1&0 &0\\0&a_0&0&1\\-1&0&a_0&0\\0&0&0&a_0\end{smallmatrix}\right), \left(\begin{smallmatrix}a_0&\gamma&0 &0\\0&a_0&0&1\\-1&0&a_0&0\\0&0&0&a_0\end{smallmatrix}\right)\\ a_0= \pm1,~\langle \gamma \rangle = \F_q^*\end{matrix}$ & 4 & $2q^2$ & $A_4$ \\ \hline
$\begin{matrix} \left(\begin{smallmatrix}\zeta^i&0&0 &0\\0&\zeta^{-i}&0&0\\0&0&\zeta^{qi}&0\\0&0&0&\zeta^{-qi}\end{smallmatrix}\right), \\ i \in \{1,2,\ldots, (q^2-1)/4\}\end{matrix}$ & $\frac{1}{4}(q^2-1)$ & $q^2+1$ & $B_1$ \\ \hline
$\begin{matrix} \left(\begin{smallmatrix}\theta^i&0&0 &0\\0&\theta^{-i}&0&0\\0&0&\theta^{qi}&0\\0&0&0&\theta^{-qi}\end{smallmatrix}\right), \\ i: \theta^i,\theta^{-i},\theta^{qi}, \theta^{-qi}\\\text{ are distinct} \end{matrix}$ & $\frac{1}{4}(q-1)^2$ & $q^2-1$ & $B_2$ \\ \hline
$\begin{matrix} \left(\begin{smallmatrix}\gamma^i&0&0 &0\\0&\gamma^{-i}&0&0\\0&0&\gamma^{j}&0\\0&0&0&\gamma^{-j}\end{smallmatrix}\right), \\ i,j \in \{1,2,\ldots, (q-3)/2\} \\ i \neq j \end{matrix}$ & $\frac{1}{8}(q-3)(q-5)$ & $(q-1)^2$ & $B_3$ \\ \hline
$\begin{matrix} \left(\begin{smallmatrix}\eta^i&0&0 &0\\0&\eta^{-i}&0&0\\0&0&\eta^{j}&0\\0&0&0&\eta^{-j}\end{smallmatrix}\right), \\ i,j \in \{1,2,\ldots, (q-1)/2\} \\ i \neq j \end{matrix}$ & $\frac{1}{8}(q-1)(q-3)$ & $(q+1)^2$ & $B_4$ \\ \hline
$\begin{matrix} \left(\begin{smallmatrix}\eta^i&0&0 &0\\0&\eta^{-i}&0&0\\0&0&\gamma^{j}&0\\0&0&0&\gamma^{-j}\end{smallmatrix}\right), \\ i \in \{1,2,\ldots, (q-1)/2\} \\ j \in \{1,2,\ldots, (q-3)/2\} \end{matrix}$ & $\frac{1}{4}(q-1)(q-3)$ & $(q^2-1)$ & $B_5$ \\ \hline
$\begin{matrix} \left(\begin{smallmatrix}\eta^i&0&0 &0\\0&\eta^{-i}&0&0\\0&0&\eta^{i}&0\\0&0&0&\eta^{-i}\end{smallmatrix}\right), \\ i \in \{1,2,\ldots, (q-1)/2\}  \end{matrix}$ & $\frac{1}{2}(q-1)$ & $q(q+1)(q^2-1)$ & $B_6$ \\ \hline
$\begin{matrix} \left(\begin{smallmatrix}\eta^i&0&1 &0\\0&\eta^{-i}&0&1\\0&0&\eta^{i}&0\\0&0&0&\eta^{-i}\end{smallmatrix}\right), \\ i \in \{1,2,\ldots, (q-1)/2\}  \end{matrix}$ & $\frac{1}{2}(q-1)$ & $q(q+1)$ & $B_7$ \\ \hline
$\begin{matrix} \left(\begin{smallmatrix}\gamma^i&0&0 &0\\0&\gamma^{-i}&0&0\\0&0&\gamma^{i}&0\\0&0&0&\gamma^{-i}\end{smallmatrix}\right), \\ i \in \{1,2,\ldots, (q-3)/2\} \end{matrix}$ & $\frac{1}{2}(q-3)$ & $q(q-1)(q^2-1)$ & $B_8$ \\ \hline
$\begin{matrix} \left(\begin{smallmatrix}\gamma^i&0&1 &0\\0&\gamma^{-i}&0&1\\0&0&\gamma^{i}&0\\0&0&0&\gamma^{-i}\end{smallmatrix}\right), \\ i \in \{1,2,\ldots, (q-3)/2\} \end{matrix}$ & $\frac{1}{2}(q-3)$ & $q(q-1)$ & $B_9$ \\ \hline
$\begin{matrix} \left(\begin{smallmatrix}\eta^i&0&0 &0\\0&\eta^{-i}&0&\\0&0&a_0&0\\0&0&0&a_0\end{smallmatrix}\right), \\ i \in \{1,2,\ldots, (q-1)/2\}\\a_0 = \pm1  \end{matrix}$ & $(q-1)$ & $q(q+1)(q^2-1)$ & $C_1$ \\ \hline
$\begin{matrix} \left(\begin{smallmatrix}\eta^i&0&0 &0\\0&\eta^{-i}&0&0\\0&0&a_0&1\\0&0&0&a_0\end{smallmatrix}\right),~\left(\begin{smallmatrix}\eta^i&0&0 &0\\0&\eta^{-i} &0&0\\0&0&a_0&\gamma\\0&0&0&a_0\end{smallmatrix}\right)  \\ i \in \{1,2,\ldots, (q-1)/2\}\\a_0 = \pm1 \end{matrix}$ & $2(q-1)$ & $2q(q+1)$ & $C_2$\\ \hline
$\begin{matrix} \left(\begin{smallmatrix}\gamma^i&0&0 &0\\0&\gamma^{-i}&0&\\0&0&a_0&0\\0&0&0&a_0\end{smallmatrix}\right), \\ i \in \{1,2,\ldots, (q-3)/2\}\\a_0 = \pm1  \end{matrix}$ & $(q-3)$ & $q(q-1)(q^2-1)$ & $C_3$ \\ \hline
$\begin{matrix} \left(\begin{smallmatrix}\gamma^i&0&0 &0\\0&\gamma^{-i}&0&0\\0&0&a_0&1\\0&0&0&a_0\end{smallmatrix}\right),~\left(\begin{smallmatrix}\gamma^i&0&0 &0\\0&\gamma^{-i} &0&0\\0&0&a_0&\gamma\\0&0&0&a_0\end{smallmatrix}\right)  \\ i \in \{1,2,\ldots, (q-3)/2\}\\a_0 = \pm1 \end{matrix}$ & $2(q-3)$ & $2q(q-1)$ & $C_4$\\ \hline
$\begin{matrix} \left(\begin{smallmatrix}1&0&0 &0\\0&1&0&\\0&0&-1&0\\0&0&0&-1\end{smallmatrix}\right) \end{matrix}$ & $1$ & $q^2(q^2-1)^2$ & $D_1$ \\ \hline
$\begin{matrix} \left(\begin{smallmatrix}a_0&0&0 &0\\0&a_0&0&\\0&0&-a_0&1\\0&0&0&-a_0\end{smallmatrix}\right),~\left(\begin{smallmatrix}a_0&0&0 &0\\0&a_0&0&\\0&0&-a_0&\gamma\\0&0&0&-a_0\end{smallmatrix}\right) \\ a_0 = \pm 1\end{matrix}$ & $4$ & $2q^2(q^2-1)$ & $D_2$ \\ \hline
$\begin{matrix} \left(\begin{smallmatrix}1&1&0 &0\\0&1&0&\\0&0&-1&1\\0&0&0&-1\end{smallmatrix}\right),~\left(\begin{smallmatrix}1&1&0 &0\\0&1&0&\\0&0&-1&\gamma\\0&0&0&-1\end{smallmatrix}\right) \\ \left(\begin{smallmatrix}1&\gamma&0 &0\\0&1&0&\\0&0&-1&1\\0&0&0&-1\end{smallmatrix}\right),~\left(\begin{smallmatrix}1&\gamma&0 &0\\0&1&0&\\0&0&-1&\gamma\\0&0&0&-1\end{smallmatrix}\right)\end{matrix}$ & $4$ & $4q^2$ & $D_3$ \\ \hline
\caption{Conjugacy Classes of $Sp_4(\Fq)$}\label{TSp4Con}
\end{longtable}
\section{Conjugacy classes of $O_2^+(\Fq)$, and $O_2^-(\Fq)$}\label{appendixb}
For the branching in group $Sp_4(\Fq)$ we require the conjugacy classes in $2\times 2$ orthogonal group over $\Fq$. There are two such groups one split, denoted as $O_2^+(\Fq)$, and other non-split, denoted as $O_2^-(\Fq)$. The split orthogonal group $O_2^+(\Fq) =\{A \in GL_2(\Fq) \mid A.\beta.{}^t\!A = \beta\}$ is defined with respect the symmetric bilinear form $\beta = \begin{pmatrix}&1\\1&\end{pmatrix}$. If we take the symmetric bilinear form to be $ \begin{pmatrix}1 & \\ & -1\end{pmatrix}$, it is equivalent to $\beta$, and hence gives orthogonal group, conjugate to $O_2^+(\Fq)$. The conjugacy classes of this group $O_2^+(\Fq)$ are:
\begin{center}
 \begin{tabular}{|c|c|c|c|} \hline
 \text{Representative} & \text{No. of Classes} & \text{Centralizer Size} & \text{Class Size}\\ \hline
$I_2$ , $-I_2$ & $1,1$ & $2(q-1)$ & $1$  \\ \hline 
$\begin{pmatrix}  \gamma^i & 0 \\ 0 & \gamma^{-i} \end{pmatrix}$ & $\displaystyle\frac{q-3}{2}$ & $q-1$ & $2$ \\ 
$i = 1, \ldots, (q-3)/2$ & & & \\ \hline
$\begin{pmatrix} 0 & 1 \\ 1 & 0\end{pmatrix}$ ,$\begin{pmatrix} 0 & \gamma \\ \gamma^{-1} & 0\end{pmatrix}$ & $1,1$ & $4$ & $\displaystyle\frac{q-1}{2}$. \\ \hline
 \end{tabular}
\end{center}\vskip2mm 
The non-split orthogonal group $O_2^-(\Fq) = \{A \in GL_2(\Fq)\mid A.\beta{}.^t\!A = \beta\}$ is defined with respect to the symmetric bilinear form $\beta=\begin{pmatrix}1 & \\ & -\gamma\end{pmatrix}$, where $\gamma$ is a non-square such that $\langle \gamma \rangle = \Fq^*$. The conjugacy classes in $O_2^-(\Fq)$ are:
\begin{center}
 \begin{tabular}{|c|c|c|c|}\hline
  \text{Representative} & \text{No. of Classes} & \text{Centralizer Size} & \text{Class Size}\\ \hline
  $I_2$, $-I_2$ & $1,1$ & $2(q+1)$ & 1\\ \hline
  $\begin{pmatrix} a_0 & a_1\\ \gamma a_1 & a_0\end{pmatrix}$ &$\displaystyle\frac{q-1}{2}$ & $q + 1$ & $2$\\ 
  $a_0^2 + \gamma a_1^2 = 1$, $a_0 \neq 0$ & & & \\ \hline
  $\begin{pmatrix}1 & \\ & -1\end{pmatrix}$, $\begin{pmatrix}-1 & \\ & 1\end{pmatrix}$ & $1,1$ & 4 & $\displaystyle\frac{q+1}{2}$. \\ \hline
 \end{tabular}
\end{center}

\bibliographystyle{alpha}

\end{document}